%% file: Main.tex
\documentclass[11pt]{amsart}

\usepackage{amssymb}
\usepackage{epsfig}
\usepackage{comment}
\usepackage{amsmath}
\usepackage{subcaption}
\usepackage[section]{placeins}
\usepackage{mathrsfs}
\usepackage{graphicx}
\usepackage[notrig]{physics}
\usepackage{units}
\usepackage{bm}
\usepackage[hidelinks]{hyperref}
\usepackage{booktabs}

\numberwithin{equation}{section}

\newtheorem{theorem}{Theorem}[section]
\newtheorem{proposition}[theorem]{Proposition}
\newtheorem{example}[theorem]{Example}
\newtheorem{lemma}[theorem]{Lemma}
\newtheorem{corollary}[theorem]{Corollary}
\newtheorem{definition}[theorem]{Definition}
\newtheorem{remark}[theorem]{Remark}

\newcommand{\R}{\mathbb{R}}
\newcommand{\diver}{\operatorname{div}}
\newcommand{\loc}{\mathrm{loc}}
\newcommand{\sym}{\mathrm{sym}}

\newcommand{\cons}{\mathrm{cons}}
\newcommand{\geom}{\mathrm{geom}}
\newcommand{\K}{\mathsf{K}}
\newcommand{\Cth}{\mathcal C}
\newcommand{\A}{\mathcal A}
\newcommand{\D}{\mathcal D}
\newcommand{\E}{\mathcal E}
\newcommand{\Z}{\mathcal Z}

\newcommand{\I}{\mathcal I}
\newcommand{\J}{\mathcal J}
\newcommand{\M}{\mathcal M}
\newcommand{\Lag}{\mathcal L}
\newcommand{\Gmap}{g}

\def\grad{{\nabla}}
\def\teste{{\delta e}}
\def\testu{{\delta u}}
\def\tests{{\delta s}}
\def\testl{{\delta \lambda}}
\def\testm{{\delta \mu}}
\def\k{\mathsf{k}}

\usepackage{xcolor}
\newcommand{\Ext}{\mathscr F}

\allowdisplaybreaks

\begin{document}

\title[Data-driven porous media]{Existence of thermodynamically consistent solutions for data-driven porous media problems}

\author{R. Codina}
\address{R. Codina:
Department of Civil and Environmental Engineering,
Universitat Polit\`ecnica de Catalunya (UPC),
Jordi Girona 1--3,
08034 Barcelona, Spain;
and International Centre for Numerical Methods in Engineering (CIMNE),
Edifici C1, Campus Nord UPC,
C/ Gran Capit\`a, S/N,
08034 Barcelona, Spain}
\email{ramon.codina@upc.edu}

\author{C. G. Gebhardt}
\address{C. G. Gebhardt:
Geophysical Institute and Bergen Offshore Wind Centre,
University of Bergen,
All\'{e}gaten 70,
5007 Bergen, Norway}
\email[corresponding author]{cristian.gebhardt@uib.no}

\author{M. Ortiz}
\address{M. Ortiz:
Division of Engineering and Applied Science, 
California Institute of Technology,
1200 East California Boulevard,
Pasadena, CA 91125, USA}
\email{ortiz@caltech.edu}

\address{M. Ortiz:
International Centre for Numerical Methods in Engineering (CIMNE)
and Universitat Polit\`ecnica de Catalunya (UPC),
Edifici C1, Campus Nord UPC,
C/ Gran Capit\`a, S/N,
08034 Barcelona, Spain}

\begin{abstract}
Data-Driven Computational Mechanics (DDCM) replaces traditional phenomenological constitutive models by directly reformulating boundary-value problems in terms of local material state data obtained from experiments or fine-scale simulations. Standard DDCM formulations, only enforcing equilibrium and compatibility, do not inherently guarantee compliance with the second law of thermodynamics. This breakdown occurs particularly when input material data sets are subject to noise or local physical non-admissibility. In this work, we present a variational DDCM framework specifically tailored to diffusion--reaction problems. Taking advantage of the simplicity of the thermodynamic constraint in gradient-flux systems, we propose an augmented formulation that explicitly enforces the second law of thermodynamics as a hard constraint within the energy-minimization problem. Although the set of thermodynamically admissible states is non-convex and fails to be weakly closed in the ambient phase space, we establish existence of minimizers by proving that the intersection of the admissible set with the subspace of fields that are compatible and in equilibrium is weakly sequentially closed via a compensated compactness argument. To enable practical computations, we analyze both a Lagrange multiplier formulation and a penalization scheme. We prove the $\Gamma$-convergence of the penalized functionals to the exact constrained problem and establish a fully discrete convergence framework incorporating spatial finite-element discretization and empirical data-set approximations. Numerical experiments confirm that the proposed penalty scheme effectively restores thermodynamic consistency even in the presence of severely corrupted material data.
\end{abstract}

\date{\today}

\subjclass[2020]{76S05, 49J45, 65N30, 80A17}

\keywords{Data-Driven Computational Mechanics; Thermodynamic Consistency; Porous Media Flow; Second Law of Thermodynamics; Compensated Compactness; \(\Gamma\)-Convergence; Penalty Methods; Finite Element Method}

\maketitle

\section{Introduction}

In recent years, Data-Driven Computational Mechanics (DDCM) has emerged as a novel computational paradigm that reformulates classical bound\-ary-value problems arising in computational physics by directly incorporating material data obtained through experiments or numerical simulations, thereby bypassing the traditional step of constructing empirical phenomenological constitutive models. The standard unconstrained data-driven formulation \cite{KirchdoerferOrtiz2016} seeks local material states---such as stress-strain pairs in solid mechanics or gradient-flux pairs in transport problems---that satisfy exact geometric compatibility and conservation laws while minimizing an energy-equivalent distance function to a given empirical data set in phase space. By disconnecting the fundamental conservation and compatibility conditions (which are exact physical constraints) from material behavior (which is purely empirical), DDCM eliminates modeling errors, parameter calibration uncertainties, and empirical bias. The paradigm has since been extended well beyond its original static, noise-free setting, including to computational dynamics~\cite{KirchdoerferOrtiz2018}, to material data sets corrupted by experimental noise~\cite{KirchdoerferOrtiz2017}, and to strongly nonlinear material responses~\cite{GaletzkaLoukrezisDeGersem2021}. The question of existence of solutions for such data-driven optimization problems has also been addressed, mainly in the context of data-driven elasticity~\cite{CMO2018,ContiMullerOrtiz2020,ContiHoffmannOrtiz2023Inference,ContiHoffmannOrtiz2023Rates,GebhardtSteinbach2026}. 

From a variational standpoint, the continuous data-driven problem corresponds to an optimization problem over a phase-space functional constrained by the material-independent feasible set containing equilibrium (conservation), compatibility, and boundary conditions. The constraint can be enforced by introducing an indicator function in the functional to be minimized or, in some cases, by using Lagrange multipliers. 

Equilibrium and compatibility are non-negotiable physical constraints grounded in the axioms of classical physics; \emph{the second law of thermodynamics is equally fundamental}. However, standard DDCM formulations do not automatically preserve thermodynamic admissibility. This breakdown occurs either because the input data set is corrupted by experimental or numerical noise, or because complex and noisy data distributions lead to distance-minimization solutions that locally violate the second law, even when all individual data points adhere to it.

The primary objective of this study is twofold. First, we establish a comprehensive, variational DDCM framework tailored to transport phenomena in porous media---a formulation that extends naturally to general diffusion--reaction systems, though we adopt the terminology of flow in porous media throughout for clarity. 
Second, taking advantage of the straightforward geometric interpretation of thermodynamic constraints in this setting, we propose an augmented formulation that explicitly enforces the second law of thermodynamics as a hard constraint within the minimization problem, thereby guaranteeing the physical admissibility of the computed field solutions. The closest antecedent is the recent PDE-constrained DDCM formulation for diffusion--reaction problems presented in~\cite{BGBA-2025,CABG-2026}, where the second-law inequality was formulated but subsequently omitted from the analytical treatment in order to obtain a linear saddle-point problem. The present work addresses precisely this outstanding issue. We incorporate the thermodynamic inequality into the continuous variational problem, establish weak sequential closedness of the physically admissible feasible set through compensated compactness, and develop both an exact multiplier formulation and a penalized approximation.

These objectives dictate the overall structure of the paper. Following the formal problem statement in Section 2, Section 3 details the adaptation of the DDCM formulation to flow in porous media without enforcing the second law of thermodynamics---a baseline framework we refer to as the unconstrained DDCM problem. This section delivers the complete theoretical foundation, including main theorems, auxiliary results, and the approximation scheme for the empirical material data set. Special attention is given to the degenerate case of a single-element data set and to the enforcement of equilibrium and compatibility conditions via Lagrange multipliers, which serves as the core formulation for our numerical experiments. Next, Section 4 addresses the spatial discretization of the optimization problem using the finite element method.

Section 5 introduces the explicit enforcement of the second law of thermodynamics as a constraint within the optimization problem. The primary technical challenge lies in the topology of the phase space: the set $\mathcal{C}$ of thermodynamically admissible states is non-convex and, more critically, fails to be weakly closed under the natural topology of the phase space. To overcome this limitation, we redefine the admissible set as the intersection of $\mathcal{C}$ with the space of states $\mathcal{E}$  that satisfy exact equilibrium and compatibility. By leveraging a compensated compactness argument, we demonstrate that this restricted set $\mathcal{E} \cap \mathcal{C}$  is indeed weakly closed, which, in turn, establishes the existence of minimizers that strictly abide by the second law of thermodynamics.

Having established these core results, Section 6 explores the enforcement of the second law of thermodynamics using a Lagrange multiplier formulation. We demonstrate that if a Lagrange multiplier exists---solving the standard saddle-point system and fulfilling the corresponding complementarity conditions---the resulting primal state is a thermodynamically consistent minimizer of the DDCM problem. However, ensuring the existence of such a multiplier poses significant mathematical challenges, as the multiplier space is neither bounded nor compact. To address this issue, we adopt a game-theoretic approach, showing that restricting the multipliers to a bounded domain renders the saddle-point problem equivalent to a penalized formulation with a constant penalty parameter. This insight underpins the penalty-based enforcement scheme introduced in Section 7, where we prove that the penalized problem is well-posed and $\Gamma$-converges to the original constrained problem as the penalty parameter tends to infinity. Finally, Section 8 concludes the theoretical framework by establishing a unified analysis that simultaneously incorporates material data set approximation, spatial discretization via finite elements, and penalty-based thermodynamic enforcement.

Following the theoretical development, Section 9 presents a numerical example to evaluate the performance of the proposed method. The results demonstrate that the penalty formulation successfully restores thermodynamic consistency as the penalty parameter increases, even when the input data set is noisy or contains local violations of the second law of thermodynamics. Finally, Section 10 summarizes our main findings and offers concluding remarks.

\section{Classical formulation of porous media problems} \label{xuwzhr}

We begin by reviewing the classical setting for porous media. 

Let \(\Omega\subset\R^N\), \(N\in\{2,3\}\), be a bounded, connected, open
Lipschitz domain with boundary \(\Gamma:=\partial\Omega\). Let
\(\Gamma_D,\Gamma_N\subset\Gamma\) be relatively open, disjoint parts of the
boundary such that, up to \(\mathcal H^{N-1}\)-null sets,
        $\Gamma={\Gamma}_D\cup{\Gamma}_N$.
We prescribe hydraulic head \(g\) on \(\Gamma_D\) and normal flux \(h\) on
\(\Gamma_N\). 

The primary field is a scalar potential $u:\Omega\to\R$, interpreted as hydraulic head, pressure, or chemical potential depending on the application. The geometric and conservation laws for porous media are
\begin{subequations} \label{fvZ47W}
\begin{align}
        e(x) &= \grad u(x), &&\text{in }\Omega; \label{eq:pm-geom}\\
        \diver s(x)+\zeta(x)u(x)-q(x) &=0, &&\text{in }\Omega; \label{eq:pm-cons}\\
        u(x)&=g(x), &&\text{on }\Gamma_D, \label{eq:pm-dir}\\
        s(x)\cdot\nu(x)&=h(x), &&\text{on }\Gamma_N, \label{eq:pm-neu}
\end{align}
\end{subequations}
where $e=\grad u$ is the hydraulic-gradient variable and \(s\) is the Darcy flux. In addition, \(q\) is a source term, \(\zeta \geq 0\) a.e. is a reaction or storage coefficient, and \(\nu\) is the outward unit normal. 

The field equations (\ref{fvZ47W}) are closed by Darcy's law
\begin{equation} \label{s88Hp6} 
        s(x)=-\K e(x),
\end{equation}
where, for simplicity, \(\K\) is a uniform hydraulic-conductivity independent of \(x\). We further assume that  $\K\in\R^{N\times N}_{\sym}$ is a fixed symmetric positive-definite matrix. The isotropic specialization is \(\K=\k\, I\) with \(\k > 0\). 

Central to the discussion that follows is the property that, by virtue of these assumptions, the hydraulic gradient $e$ and the Darcy flux $s$ satisfy the pointwise sign condition
\begin{equation} \label{5L2sTj}
        e(x)\cdot s(x)\le 0 ,
\end{equation}
which is the physical requirement of positive dissipation demanded by the second law of thermodynamics \cite{HassanizadehGray1980, Coussy2004}. We note that, at the present level of description, (\ref{5L2sTj}) is a direct consequence of the assumed properties of \(\K\) and need not be enforced as a separate constraint.

Conditions for the well-posedness of the classical boundary value problem (BVP) (\ref{s88Hp6}),  (\ref{fvZ47W}) are well-known and are summarized in Theorem~\ref{thm:classical-darcy-wellposedness} below. We write
\[
        H^1_D(\Omega):=\{v\in H^1(\Omega):v=0\text{ on }\Gamma_D\},
\]
and, for an admissible trace \(g \in H^{1/2}(\Gamma_D)\),
\[
        H^1_g(\Omega):=\{v\in H^1(\Omega):v=g\text{ on }\Gamma_D\}.
\]
$\Ext g \in  H^1_g(\Omega)$ will denote a fixed lifting of $g \in H^{1/2}(\Gamma_D)$.

For vector fields \(s\in H(\diver,\Omega;\R^N)\), the normal trace \(s\cdot\nu\in H^{-1/2}(\partial\Omega)\) is understood through Green's formula:
\[
        \int_\Omega s\cdot \grad v\,\dd x
        +\int_\Omega (\diver s)v\,\dd x
        =
        \langle s\cdot\nu,v\rangle_{\partial\Omega}
\]
for all \(v\in H^1(\Omega)\). Here and below, we will use the notation $\langle \cdot , \cdot\rangle_\omega$ to denote the duality between $H^{-1/2}(\omega)\times H^{1/2}(\omega)$ when $\omega\subseteq \partial\Omega$ and, when $\omega = \Omega$, the duality between $H^{-1}(\Omega)\times H^{1}_D(\Omega)$; see~\cite{KurulaZwart2012} for a detailed account of this duality between the gradient and divergence operators on bounded Lipschitz domains.

The variational form of problem \eqref{fvZ47W}--\eqref{s88Hp6} admits two functional settings: the primal and the dual formulations. Except in the case of minimum regularity---determined by the regularity of $q$ and $\Omega$---both formulations yield the same unique solution at the continuous level. However, the choice of one formulation over the other has a significant impact on the numerical approximation. Although the dual formulation is more commonly used in the porous media community, here we restrict our attention to the primal formulation. Thus, the variational form of the problem we consider reads as follows: given $q\in H^{-1}(\Omega)$, $g \in H^{1/2}(\Gamma_D)$ and $h\in H^{-1/2}(\Gamma_N)$, find $(u, e, s)\in X := H^1_g(\Omega) \times L^2(\Omega;\mathbb{R}^N) \times L^2(\Omega;\mathbb{R}^N )$ such that
\begin{subequations} \label{eq:threefield}
\begin{alignat}{3}
 (\K e , \teste )  + ( s, \teste)      & \, = 0 \quad &&\forall \teste \in L^2(\Omega;\mathbb{R}^N),\label{eq:threefield-const}\\
 (e , \tests)                              - (\grad u , \tests)  & \, = 0 \quad &&\forall \tests \in L^2(\Omega;\mathbb{R}^N),\label{eq:threefield-geom}\\
                         -(s , \grad \testu )  + (\zeta u , \testu) & \,=   \langle q , \testu\rangle_\Omega - \langle h , \testu\rangle_{\Gamma_N} \quad &&\forall \testu \in H^1_D(\Omega),
                         \label{eq:threefield-bal}\
\end{alignat}
\end{subequations}
where $(\cdot,\cdot)$ stands for the $L^2(\Omega)$ inner product (of scalars or vectors). In the functional setting considered, this formulation is exactly equivalent to find $u \in H^1_g(\Omega) $ such that 
\begin{align}\label{eq:irred}
(\K\grad u , \grad \testu ) + (\zeta u , \testu ) =  \langle q , \testu\rangle_\Omega - \langle h , \testu\rangle_{\Gamma_N} ~&&\forall \testu \in H^1_D(\Omega).
\end{align}

In the phase-space formulation below, where fluxes $s$ are only in \(L^2(\Omega;\R^N)\), the Neumann datum is encoded weakly in the balance equation.

\begin{theorem}[Classical well-posedness of the Darcy problem]
\label{thm:classical-darcy-wellposedness}
Let $\Omega\subset\mathbb{R}^N$, $N\in\{2,3\}$, be a bounded Lipschitz domain, and assume that $\Gamma_D$ has positive $(N-1)$-dimensional measure. Let
\[
    \K\in\mathbb{R}^{N\times N}_{\rm sym}, \quad
    \K\xi\cdot\xi\ge \alpha |\xi|^2
    \quad \forall \xi\in\mathbb{R}^N
\]
for some $\alpha>0$, and let $\zeta\in L^\infty(\Omega)$ with $\zeta\ge 0$ a.e. in $\Omega$. Assume that $q\in H^{-1}(\Omega)$, $h\in H^{-1/2}(\Gamma_N)$, and that 
$g \in H^{1/2}(\Gamma_D)$.
Then, there exists a unique $u\in H^1_g(\Omega)$ solution to problem \eqref{eq:irred} that satisfies the stability estimate:
\[
    \|u\|_{H^1(\Omega)}
    \le
    C\Big(
    \|q\|_{H^{-1}(\Omega)}
    +
    \|h\|_{H^{-1/2}(\Gamma_N)}
    +
    \|  g\|_{H^{1/2}(\Gamma_D)}
    \Big),
\]
where $C>0$ depends only on $\Omega$, $\Gamma_D$, $\alpha$, $\K$,
and $\|\zeta\|_{L^\infty(\Omega)}$.
\end{theorem}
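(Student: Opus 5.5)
The plan is the standard lifting-plus-Lax--Milgram argument. First I homogenize the Dirichlet condition. Let $\Ext g\in H^1_g(\Omega)$ be the fixed lifting, chosen through a bounded right inverse of the trace on $\Gamma_D$ so that $\|\Ext g\|_{H^1(\Omega)}\le C_{\Gamma}\|g\|_{H^{1/2}(\Gamma_D)}$. I then write $u=w+\Ext g$ with $w\in H^1_D(\Omega)$. Problem \eqref{eq:irred} becomes: find $w\in H^1_D(\Omega)$ such that
\[
a(w,\testu)=\ell(\testu)\quad\forall\testu\in H^1_D(\Omega),
\]
where
\[
a(w,v):=(\K\grad w,\grad v)+(\zeta w,v),
\]
\[
\ell(v):=\langle q,v\rangle_\Omega-\langle h,v\rangle_{\Gamma_N}-(\K\grad\Ext g,\grad v)-(\zeta\Ext g,v).
\]
Since the map $w\mapsto w+\Ext g$ is a bijection from $H^1_D(\Omega)$ onto $H^1_g(\Omega)$, existence and uniqueness for $w$ give the same for $u$.

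Next I verify the hypotheses of the Lax--Milgram lemma on the Hilbert space $H^1_D(\Omega)$.

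\emph{Continuity of $a$.} This follows from $|a(w,v)|\le(|\K|+\|\zeta\|_{L^\infty})\|w\|_{H^1}\|v\|_{H^1}$.

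\emph{Continuity of $\ell$.} The term $\langle q,v\rangle_\Omega$ is bounded by $\|q\|_{H^{-1}}\|v\|_{H^1}$ through the stated duality. For the Neumann term, I use that the restriction to $\Gamma_N$ of the trace of $v\in H^1_D(\Omega)$ lies in the space dual to $H^{-1/2}(\Gamma_N)$ with the duality $\langle\cdot,\cdot\rangle_{\Gamma_N}$ (understood, as in the paper, via Green's formula). Combined with the trace theorem, this gives $|\langle h,v\rangle_{\Gamma_N}|\le C\|h\|_{H^{-1/2}(\Gamma_N)}\|v\|_{H^1}$. The lifting terms are bounded by $(|\K|+\|\zeta\|_{L^\infty})C_\Gamma\|g\|_{H^{1/2}(\Gamma_D)}\|v\|_{H^1}$.

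\emph{Coercivity of $a$.} Because $\zeta\ge0$ a.e., we have $a(v,v)\ge\alpha\|\grad v\|_{L^2}^2$. Coercivity in the full $H^1$ norm then needs a Poincaré--Friedrichs inequality on $H^1_D(\Omega)$:
\[
\|v\|_{L^2}\le C_P\|\grad v\|_{L^2}\quad\text{for all } v\in H^1_D(\Omega).
\]
This is the only step that uses $|\Gamma_D|>0$ and connectedness, and I expect it to be the main (if classical) point. I would prove it by contradiction and compactness. Take $v_n\in H^1_D(\Omega)$ with $\|v_n\|_{L^2}=1$ and $\|\grad v_n\|_{L^2}\to0$. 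By Rellich's theorem on the Lipschitz domain, a subsequence converges in $L^2$, and hence in $H^1$, to some $v$ with $\grad v=0$. Since $\Omega$ is connected, $v$ is constant. Continuity of the trace gives $v=0$ on $\Gamma_D$, and since $\Gamma_D$ has positive surface measure, $v\equiv0$. This contradicts $\|v\|_{L^2}=1$. It follows that $a(v,v)\ge\alpha(1+C_P^2)^{-1}\|v\|_{H^1}^2$.

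Lax--Milgram then yields a unique $w$ with $\|w\|_{H^1}\le\alpha^{-1}(1+C_P^2)\|\ell\|_{(H^1_D)'}$. Hence
\[
\|u\|_{H^1}\le\|w\|_{H^1}+\|\Ext g\|_{H^1},
\]
and inserting the bound on $\|\ell\|_{(H^1_D)'}$ from the continuity step gives the stated estimate. The constant $C$ depends only on $\alpha$, $|\K|$, $\|\zeta\|_{L^\infty}$, $C_P$ and the trace and lifting constants, which are determined by $\Omega$ and $\Gamma_D$.
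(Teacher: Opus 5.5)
Your argument is correct: it lifts the Dirichlet datum, applies Lax--Milgram on $H^1_D(\Omega)$, and gets coercivity from the Poincar\'e--Friedrichs inequality, which is where $|\Gamma_D|>0$ and connectedness enter. This is exactly the classical proof the paper has in mind; the paper records the theorem as well known and gives no proof of its own, and your reading of $\langle q,v\rangle_\Omega$ via $H^{-1}(\Omega)\cong (H^1_D(\Omega))'$ matches the paper's stated duality convention.
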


Extension of the classical BVP to non-uniform conductivities is straightforward, provided that $\K\in L^\infty(\Omega; \R^{N\times N}_{\sym})$ and there exist constants \(0<\alpha\le \beta<\infty\) such that
\[
        \alpha |\xi|^2\le \K\xi\cdot\xi\le \beta |\xi|^2 ,
        \quad\text{for all }\xi\in\R^N .
\]
However, this extension of the theory will not be pursued here in the interest of simplicity.

\section{Data-Driven unconstrained porous media problem}

In anticipation of an empirical characterization of the material, where the local material behavior is characterized by a sample of observed local states, we recast the preceding classical formulation of Darcy flow in Data-Driven form. The key idea is to abandon the constitutive law \eqref{eq:threefield-const} and to replace it by closeness to observed local states of the material. 

For now, we neglect the thermodynamic constraint (\ref{5L2sTj}) and return to it in Section~\ref{sec:thermo-constraint}. 

\subsection{The general Data-Driven porous media problem}

We identify the local phase variable with
\[
        z(x)=(e(x),s(x))\in \Z_{\loc}:=\R^N\times\R^N,
\]
where $e=\grad u$ is the hydraulic-gradient variable, \(s\) is the Darcy flux and $\Z_{\loc}$ is the local phase space of all local states of the material. We define the global phase space as
\[
        \Z:=L^2(\Omega;\R^N)\times L^2(\Omega;\R^N),
        \quad z=(e,s) ,
\]
and metrize \(\Z\) by the energy-equivalent norm
\[
        \|z\|_\Z^2
        := \frac12
        \int_\Omega
        \left(
        \K e\cdot e
        +
        \K^{-1}s\cdot s
        \right)\dd x .
\]
When $\K$ is not available, it can be replaced by $\k\, I$, with $\k > 0$ any appropriate scaling coefficient that renders the above expression dimensionally consistent.

The conservation-law feasible set is imposed in weak form,
\[
\begin{split}
        \E_{\cons}
        :=
        \big\{
        (u,e,s)\in X:\;& \hbox{equation \eqref{eq:threefield-bal} holds}
        \big\}.
\end{split}
\]
The geometric feasible set is
\[
        \E_{\geom}
        :=
        \left\{
        (u,e,s)\in X:  \hbox{equation \eqref{eq:threefield-geom} holds}
        \right\}.
\]
The corresponding material-independent constraint set in phase space is
\begin{equation}
        \E
        :=
        \left\{
        (e,s)\in \Z:
        \exists u\in H^1_g(\Omega)
        \text{ s.t. }
        (u,e,s)\in \E_{\cons}\cap\E_{\geom}
        \right\}.
        \label{eq:E-unconstrained}
\end{equation}
Thus, \(\E\) contains only compatibility, equilibrium, and boundary conditions. The thermodynamic inequality is not part of \(\E\) in this section.

Suppose that the conductivity of the solid at all material points $x \in \Omega$ is characterized by a local material data set \(\D_{\loc} \subset \R^N\times\R^N\) of all the observed local states of the material. We note that \(\D_{\loc}\) is allowed to be a general set, e.g.~a point set, not necessarily a graph. The corresponding global material data set is
\[
        \D
        :=
        \{z=(e,s)\in \Z: z(x)\in \D_{\loc}\text{ for a.e. }x\in\Omega\}.
\]
For \(z=(e,s)\in \Z\), the distance to the material set is
\[
        d(z,\D):=\inf_{y\in \D}\|z-y\|_\Z.
\]
It bears emphasis that \(\D\) is not required to be weakly closed in \(\Z\). Indeed, a point set \(\D_{\loc}\) is generally nonconvex and fine-scale oscillations in $\Omega$ can converge weakly to values outside \(\D\), see \cite{CMO2018} for suitable notions of closure \(\bar{\D}\) of the data set.

With given phase space \(\Z\), constraint set \(\E\subset \Z\), and material data set \(\D\subset \Z\) as defined, the Data-Driven problem is
\begin{equation} \label{A6DVP2}
        \operatorname*{argmin}_{z\in \E} d^2(z,\D),
\end{equation}
or, equivalently,
\begin{equation} \label{8mP9Dp}
        \operatorname*{argmin}_{z\in \Z} \J(z) ,
\end{equation}
where
\begin{equation} \label{gBmLMc}
        \J(z) := \I_\E(z)+d^2(z,\D) ,
\end{equation}
and, here and subsequently,
\[
        \I_\A(z)
        :=
        \begin{cases}
        0, & z\in \A,\\
        +\infty, & z\notin \A.
        \end{cases}
\]
is the indicator function of a subset $\A \subset \Z$.

\subsection{Existence of Data-Driven solutions}

We begin by verifying that the feasible set \(\E\) is stable under weakly convergent perturbations. 

\begin{lemma}[Weak closedness of the feasible set]
Assume \(\zeta\in L^\infty(\Omega)\). Then, \(\E\subset \Z\) is weakly sequentially closed.
\end{lemma}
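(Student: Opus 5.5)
Take a sequence $z_k=(e_k,s_k)\in\E$ with $z_k\rightharpoonup z=(e,s)$ weakly in $\Z$. By definition of $\E$ there are potentials $u_k\in H^1_g(\Omega)$ with $(u_k,e_k,s_k)\in\E_{\cons}\cap\E_{\geom}$. The plan is to extract a weak limit $u$ of the $u_k$ and pass to the limit in \eqref{eq:threefield-geom} and \eqref{eq:threefield-bal}. Every constraint is linear, so the only real issue is a uniform bound on $u_k$.

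First, the compatibility equation \eqref{eq:threefield-geom}, tested with arbitrary $\tests\in L^2(\Omega;\R^N)$, gives $\grad u_k=e_k$ a.e. Weakly convergent sequences are bounded, so $\|\grad u_k\|_{L^2}$ is bounded uniformly in $k$. Write $u_k=\Ext g+w_k$ with $w_k\in H^1_D(\Omega)$. Because $\Gamma_D$ has positive measure (as assumed in Theorem~\ref{thm:classical-darcy-wellposedness}), the Poincar\'e--Friedrichs inequality on $H^1_D(\Omega)$ gives
\[
\|w_k\|_{H^1}\le C\|\grad w_k\|_{L^2}\le C\big(\|e_k\|_{L^2}+\|\grad \Ext g\|_{L^2}\big).
\]
Hence $(u_k)$ is bounded in $H^1(\Omega)$. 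This step is the only place with any content. If $\Gamma_D$ had zero measure, one would need to control the mean of $u_k$ through the reaction term instead, and I would flag this hypothesis as needed.

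Next, pass to a subsequence with $w_k\rightharpoonup w$ weakly in $H^1_D(\Omega)$, using that $H^1_D(\Omega)$ is a closed subspace and hence weakly closed. By Rellich, $u_k\to u:=\Ext g+w$ strongly in $L^2(\Omega)$, and $u\in H^1_g(\Omega)$. Since $\grad u_k\rightharpoonup\grad u$ and $\grad u_k=e_k\rightharpoonup e$, uniqueness of weak limits gives $\grad u=e$, so \eqref{eq:threefield-geom} holds in the limit.

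Finally, fix $\testu\in H^1_D(\Omega)$. In \eqref{eq:threefield-bal}, the term $(s_k,\grad\testu)\to(s,\grad\testu)$ by weak convergence in $L^2$. The term $(\zeta u_k,\testu)\to(\zeta u,\testu)$ because $\zeta\in L^\infty$ makes $\zeta\testu\in L^2$, and $u_k$ converges in $L^2$ (even weakly would suffice). The right-hand side does not depend on $k$. So $(u,e,s)\in\E_{\cons}\cap\E_{\geom}$, hence $z\in\E$.

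Since the full sequence $z_k$ already converges to $z$, passing to a subsequence of $u_k$ does no harm. Alternatively one can note that $\E$ is a convex set, an affine subspace, so strong closedness would suffice by Mazur. The direct argument above is simpler, however.
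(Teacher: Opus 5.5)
Your proposal is correct and follows the paper's proof essentially step for step. You apply Poincar\'e's inequality to $u_k-\Ext g\in H^1_D(\Omega)$ to bound the potentials, extract a weakly convergent subsequence in $H^1(\Omega)$, identify $e=\grad u$, and pass to the limit in the linear balance equation; your remark that the Poincar\'e step needs $\Gamma_D$ of positive measure makes explicit a hypothesis the paper uses only tacitly (via Theorem~\ref{thm:classical-darcy-wellposedness}), and, as you note yourself, the Rellich compactness is not needed.
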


\begin{proof}
Let \(z_j=(e_j,s_j)\in \E\) and \(z_j\rightharpoonup z=(e,s)\) weakly in \(\Z\). Choose \(u_j\in H^1_g(\Omega)\) with \(e_j=\grad u_j\) and \((u_j,e_j,s_j)\in\E_{\cons}\cap\E_{\geom}\). Since \((e_j)\) is bounded in \(L^2(\Omega)\), Poincare's inequality applied to \(u_j-\Ext g\in H^1_D(\Omega)\) gives boundedness of \((u_j)\) in \(H^1(\Omega)\). Passing to a subsequence, \(u_j\rightharpoonup u\) in \(H^1(\Omega)\), with \(u\in H^1_g(\Omega)\). The continuity of the gradient gives \(e=\grad u\). For every \(\testu\in H^1_D(\Omega)\), the weak balance passes to the limit because \(s_j\rightharpoonup s\) in \(L^2(\Omega)\) and \(u_j\rightharpoonup u\) in \(L^2(\Omega)\). Hence \((u,e,s)\in\E_{\cons}\cap\E_{\geom}\), and therefore \(z\in\E\).
\end{proof}

We note the following work-energy identity. 

\begin{lemma}[Porous-media power identity]
Let \(z=(e,s)\in\E\), let \(u\in H^1_g(\Omega)\) be an associated potential, and assume that \(s\in H(\diver,\Omega;\R^N)\) so that the full normal trace is defined. Then
\begin{equation}
        \int_\Omega e\cdot s\,\dd x
        =
        \int_\Omega \zeta u^2\,\dd x
        -
        \int_\Omega q u\,\dd x
        +
        \langle s\cdot\nu,g\rangle_{\Gamma_D}
        +
        \langle h,u\rangle_{\Gamma_N}.
        \label{eq:power-id}
\end{equation}
In particular, if \(g=0\) on \(\Gamma_D\), the Dirichlet-reaction term drops out.
\end{lemma}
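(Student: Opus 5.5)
The identity should follow from Green's formula, stated above for $s\in H(\diver,\Omega;\R^N)$, applied with the test function $v=u$. The weak balance \eqref{eq:threefield-bal} is then used to identify $\diver s$ and the Neumann part of the normal trace. Throughout, $u\in H^1_g(\Omega)$ is the potential with $e=\grad u$ furnished by $\E_{\geom}$.

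\emph{Step 1 (interior equation).} Take $\testu\in H^1_0(\Omega)\subset H^1_D(\Omega)$ in \eqref{eq:threefield-bal}, for which the boundary term vanishes. This gives $-(s,\grad\testu)+(\zeta u,\testu)=\langle q,\testu\rangle_\Omega$. Green's formula with $v=\testu$ gives $-(s,\grad\testu)=(\diver s,\testu)$, so
\[
\diver s+\zeta u=q \quad\text{in } H^{-1}(\Omega).
\]
Since $s\in H(\diver,\Omega;\R^N)$ and $\zeta u\in L^2(\Omega)$, this shows a posteriori that $q\in L^2(\Omega)$. Hence the pairing $\int_\Omega q u\,\dd x$ in \eqref{eq:power-id} is a genuine integral, even though $u$ does not vanish on $\Gamma_D$.

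\emph{Step 2 (Neumann trace).} Return to general $\testu\in H^1_D(\Omega)$ and substitute Step 1 into \eqref{eq:threefield-bal}. Green's formula gives $\langle s\cdot\nu,\testu\rangle_{\partial\Omega}=\langle h,\testu\rangle_{\Gamma_N}$ for every $\testu\in H^1_D(\Omega)$. In other words, $s\cdot\nu=h$ on $\Gamma_N$ in the weak sense of traces of $H^1_D$-functions.

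\emph{Step 3 (conclusion).} Apply Green's formula with $v=u$:
\[
\int_\Omega e\cdot s\,\dd x=\int_\Omega s\cdot\grad u\,\dd x=\langle s\cdot\nu,u\rangle_{\partial\Omega}-\int_\Omega(q-\zeta u)u\,\dd x .
\]
Split the boundary pairing using the lifting, $u=\Ext g+(u-\Ext g)$ with $u-\Ext g\in H^1_D(\Omega)$. Step 2 turns the part paired with $u-\Ext g$ into $\langle h,u-\Ext g\rangle_{\Gamma_N}$. The part paired with $\Ext g$ is what is denoted $\langle s\cdot\nu,g\rangle_{\Gamma_D}+\langle h,\Ext g\rangle_{\Gamma_N}$. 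Adding the two yields \eqref{eq:power-id}. If $g=0$ we may take $\Ext g=0$, and the Dirichlet term drops out.

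The main obstacle is Step 3. For Lipschitz $\Gamma_D,\Gamma_N$, the splitting of the $H^{-1/2}(\partial\Omega)$ pairing into separate $\Gamma_D$ and $\Gamma_N$ pieces is not canonical, because of $H^{1/2}_{00}$ issues at the interface $\partial\Gamma_D$. I would therefore \emph{define} $\langle s\cdot\nu,g\rangle_{\Gamma_D}:=\langle s\cdot\nu,\Ext g\rangle_{\partial\Omega}-\langle h,\Ext g\rangle_{\Gamma_N}$. I would then check, via Step 2, that this quantity is independent of the choice of lifting $\Ext g$: two liftings differ by an element of $H^1_D(\Omega)$, on which $s\cdot\nu$ acts as $h$.
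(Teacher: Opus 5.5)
Your proof is correct and follows the paper's route. Like the paper, you apply Green's formula with $v=u$ and then substitute $\diver s=q-\zeta u$, $u=g$ on $\Gamma_D$, and $s\cdot\nu=h$ on $\Gamma_N$. The paper only asserts these facts and splits the boundary pairing formally; you derive them from the weak balance \eqref{eq:threefield-bal}, which also shows $q\in L^2(\Omega)$ so that $\int_\Omega qu\,\dd x$ makes sense, and you give the $\Gamma_D$ term a lifting-independent meaning via $\Ext g$.
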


\begin{proof}
Since \(e=\grad u\), Green's identity gives
\[
        \int_\Omega e\cdot s\,\dd x
        =
        \int_\Omega \grad u\cdot s\,\dd x
        =
        -\int_\Omega u\,\diver s\,\dd x
        +
        \langle s\cdot\nu,u\rangle_{\Gamma}.
\]
Using \(\diver s=q-\zeta u\), \(u=g\) on \(\Gamma_D\), and \(s\cdot\nu=h\) on \(\Gamma_N\) gives \eqref{eq:power-id}.
\end{proof}
This result is only used in Proposition~\ref{prop:ellip} with $g = 0$, so that in fact we never require $s\cdot\nu$ to be defined on $\Gamma_D$ and therefore we do not need \(s\in H(\diver,\Omega;\R^N)\).

Existence of Data-Driven solutions then follows from the following adaptation of Tonelli's theorem, see Appendix~\ref{TepXTQ}.

\begin{theorem}[Existence of Data-Driven minimizers]
\label{thm:general-dd-existence}
Let \(Z\) be a reflexive separable Banach space and let \(\E,\D\subset Z\) be nonempty. Assume that \(\E\) and \(\D\) are weakly sequentially closed. Suppose, in addition, that there exist constants \(c>0\) and \(b\geq 0\) such that the following transversality condition holds:
\begin{equation}
\label{eq:transversality-static}
  \|y-z\|_Z \geq c\bigl(\|y\|_Z+\|z\|_Z\bigr)-b
  \quad
  \forall\, y\in \D,\ \forall\, z\in \E .
\end{equation}
Then the Data-Driven problem (\ref{A6DVP2}), or (\ref{8mP9Dp}), admits at least one solution.
\end{theorem}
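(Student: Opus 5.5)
The argument is the direct method of the calculus of variations, run on the joint problem in the product space $Z\times Z$. Problem (\ref{A6DVP2}) is equivalent to
\[
\inf\bigl\{\|y-z\|_Z^2:\ y\in\D,\ z\in\E\bigr\}.
\]
If $(y,z)$ attains this infimum, then $z$ solves (\ref{A6DVP2}), because $d(z,\D)\le\|z-y\|_Z$ and
\[
\inf_{z'\in\E}d^2(z',\D)=\inf_{y'\in\D,\,z'\in\E}\|z'-y'\|_Z^2 .
\]
So I will prove that the joint infimum is attained.

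First, set $m:=\inf\{\|y-z\|_Z:\ y\in\D,\ z\in\E\}$. This is finite since $\D$ and $\E$ are nonempty. Choose a minimizing sequence $(y_k,z_k)\in\D\times\E$ with $\|y_k-z_k\|_Z\to m$.

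Second, I obtain coercivity from the transversality condition (\ref{eq:transversality-static}). Since $\|y_k-z_k\|_Z\le m+1$ for large $k$, we get
\[
\|y_k\|_Z+\|z_k\|_Z\le \frac{m+1+b}{c},
\]
so both sequences are bounded in $Z$. This is the only place the transversality hypothesis enters, and it is the key step: without it a minimizing sequence could escape to infinity along directions where $\D$ and $\E$ approach each other asymptotically.

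Third, I extract a weakly convergent subsequence. $Z$ is reflexive and separable, so Eberlein--Šmulian (or Banach--Alaoglu together with metrizability of bounded sets of the weak topology) gives a subsequence with $y_k\rightharpoonup y$ and $z_k\rightharpoonup z$ weakly in $Z$. Weak sequential closedness of $\D$ and $\E$ gives $y\in\D$ and $z\in\E$.

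Fourth, I pass to the limit. Since $y_k-z_k\rightharpoonup y-z$ and the norm is weakly sequentially lower semicontinuous,
\[
\|y-z\|_Z\le\liminf_k\|y_k-z_k\|_Z=m .
\]
Hence $(y,z)$ is a minimizer, and for every $z'\in\E$,
\[
d(z,\D)\le\|z-y\|_Z= m\le d(z',\D).
\]
Therefore $z$ solves (\ref{A6DVP2}), and equivalently minimizes $\J$ in (\ref{8mP9Dp}).

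The main point to handle carefully is working with pairs rather than minimizing $d^2(\cdot,\D)$ directly over $\E$. The function $z\mapsto d(z,\D)$ need not be weakly lower semicontinuous when $\D$ is nonconvex, so a direct argument on $\E$ would stall at that step. Keeping the nearly optimal data points $y_k$ explicitly in the sequence avoids this: only lower semicontinuity of the norm and weak closedness of $\D$ are needed.
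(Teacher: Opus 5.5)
Your proposal is correct and follows essentially the same route as the paper, which also takes a minimizing sequence $z_j\in\E$ together with near-optimal data points $y_j\in\D$ satisfying $\|z_j-y_j\|_Z\le d(z_j,\D)+1/j$. The paper then uses transversality to bound both sequences, extracts weak limits, applies the weak closedness of $\E$ and $\D$, and concludes by weak lower semicontinuity of the norm. Your joint minimization over $\D\times\E$ repackages this same pairing device, which, as you note, avoids needing weak lower semicontinuity of $d(\cdot,\D)$.
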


\begin{proof}
Let \((z_j)\subset \E\) be a minimizing sequence for \(d^2(\cdot,\D)\).
For every \(j\), choose \(y_j\in\D\) such that
\[
  \|z_j-y_j\|_Z \leq d(z_j,\D)+\frac1j .
\]
Since \(d(z_j,\D)\) is bounded along the minimizing sequence, the
transversality estimate \eqref{eq:transversality-static} gives boundedness
of both \((z_j)\) and \((y_j)\) in \(Z\). By reflexivity, after passing to a
subsequence,
\[
  z_j \rightharpoonup z \quad\text{and}\quad y_j \rightharpoonup y
  \quad\text{weakly in } Z .
\]
The weak sequential closedness of \(\E\) and \(\D\) gives \(z\in\E\) and
\(y\in\D\). By weak lower semicontinuity of the norm,
\[
  d(z,\D)
  \leq \|z-y\|_Z
  \leq \liminf_{j\to\infty}\|z_j-y_j\|_Z
  \leq
  \liminf_{j\to\infty}\left(d(z_j,\D)+\frac1j\right).
\]
Since \((z_j)\) is minimizing, the last term is the minimum value of the distance problem. Thus \(z\) minimizes \(d^2(\cdot,\D)\) over \(\E\) or, equivalently,
\(z\) minimizes \(\J\) over \(Z\).
\end{proof}

We recall that material data sets built from local point sets are typically not weakly closed. In such cases Theorem~\ref{thm:general-dd-existence} does not apply directly and, in order to ensure existence, \(\D\) must be replaced by a suitable relaxation \cite{CMO2018}, restrict attention to approximate minimizers or pass to a convergent family of approximating data sets in the sense of Mosco convergence, see Section~\ref{fcfUFg}.

We also note that \(\E\) and \(\D\) may have zero distance without having a common point. Equivalently, \(\operatorname{dist}(\E,\D)=0\) does not, by itself, imply \(\E \cap \D \neq \emptyset\). Moreover, when \(\D\) is not weakly closed, minimizing sequences may converge weakly only to a relaxed limit, rather than to an element of the original material data set.

If the infimum of \(d^2(\cdot,\D)\) over \(\E\) is attained at \(z_\ast\in\E\), then \(d(z_\ast,\D)=\operatorname{dist}(\E,\D)\). Thus \(z_\ast\) is a best feasible approximation to the material data set in the sense of the distance functional. If, in addition, \(\E\cap\D=\emptyset\) and \(\operatorname{dist}(\E,\D)>0\), then every attained minimizer has strictly positive residual: \(d(z_\ast,\D)>0\). If the distance from \(z_\ast\) to \(\D\) is also attained, then there exists \(y_\ast\in\D\) such that \(\|z_\ast-y_\ast\|_\Z = \operatorname{dist}(\E,\D)\).

So far, we have considered a general data set $\D$, without any particular algebraic structure. If $\D$ is a closed linear subspace of $\Z$, there is a case in which there is a unique minimizer of $\J$ and the minimum attained by this functional is zero. This corresponds precisely to the case in which the relationship between $e$ and $s$ is linear, i.e., to Darcy's law.

\begin{corollary}[Existence and zero residual under a direct-sum condition]
\label{cor:direct-sum-existence}
Assume that the material-independent set is affine, \(\E=z_0+\E_0\), where \(\E_0\subset \Z\) is a closed linear subspace. Assume also that \(\D\subset \Z\) is a closed linear subspace and that \(\Z=\E_0\oplus \D\). Then the Data-Driven problem (\ref{A6DVP2}), or (\ref{8mP9Dp}) has a unique minimizer \(z_\ast\). Moreover,
\[
  z_\ast\in \E\cap\D,
  \quad
  \text{and}
  \quad
  \min_{\Z}\J=0 .
\]
\end{corollary}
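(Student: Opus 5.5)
The direct-sum hypothesis will produce an explicit point of \(\E\cap\D\); the zero minimum value and uniqueness then follow almost immediately. Write \(\Z=\E_0\oplus\D\), meaning \(\E_0\cap\D=\{0\}\) and \(\E_0+\D=\Z\). Both are closed subspaces of the Hilbert space \(\Z\), so by the closed graph/open mapping theorem the algebraic projections are bounded; we only need the decomposition itself, though.

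\textbf{Existence with zero residual.} Decompose \(z_0=a+b\) with \(a\in\E_0\), \(b\in\D\), and set \(z_\ast:=z_0-a=b\). Then \(z_\ast\in z_0+\E_0=\E\), and also \(z_\ast=b\in\D\). Hence \(z_\ast\in\E\cap\D\) and \(d(z_\ast,\D)=0\), so \(\J(z_\ast)=0\). Since \(\J\ge 0\) everywhere, \(z_\ast\) is a minimizer and \(\min_\Z\J=0\).

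\textbf{Uniqueness.} Let \(z\) be any minimizer. Then \(\J(z)=0\), so \(z\in\E\) and \(d(z,\D)=0\). Because \(\D\) is a closed linear subspace, it is norm-closed, so \(d(z,\D)=0\) implies \(z\in\D\). Thus \(z\in\E\cap\D\). For two such points \(z,z'\), the difference \(z-z'\) lies in \(\E_0\) because \(\E\) is an affine translate of \(\E_0\), and it lies in \(\D\) because \(\D\) is a linear subspace. Since \(\E_0\cap\D=\{0\}\), we get \(z=z'\).

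\textbf{Where care is needed.} The only step needing care is that \(d(z,\D)=0\) forces \(z\in\D\). This uses the norm-closedness of \(\D\), which is part of the hypothesis; alternatively, the infimum is attained by the orthogonal projection onto \(\D\). No transversality condition or appeal to Theorem~\ref{thm:general-dd-existence} is required. One could instead check that the direct sum yields the transversality estimate \eqref{eq:transversality-static} through boundedness of the projections and then apply that theorem, but the explicit construction above is shorter and also delivers uniqueness.
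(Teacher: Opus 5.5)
Your proof is correct and follows essentially the same route as the paper: decompose $z_0$ along $\E_0\oplus\D$ to exhibit a point of $\E\cap\D$, which gives zero minimum value, and then use $\E_0\cap\D=\{0\}$ for uniqueness. You also spell out that every minimizer has $d(z,\D)=0$ and therefore lies in $\D$ because $\D$ is closed; the paper's proof leaves this step implicit when it passes from uniqueness of the point in $\E\cap\D$ to uniqueness of the minimizer.
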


\begin{proof}
Since \(\Z=\E_0\oplus\D\), any element \(z_0\in\Z\) has a unique decomposition \(z_0=e_0+d\), with \(e_0\in\E_0\) and \(d\in\D\). Then, \(d=z_0-e_0\in z_0+\E_0=\E\), and also \(d\in\D\). Hence \(\E\cap\D\neq\emptyset\), so \(\inf_{z\in\E} d^2(z,\D)=0\). If \(z_1,z_2\in\E\cap\D\), then \(z_1-z_2\in\E_0\cap\D\). The direct-sum condition gives \(\E_0\cap\D=\{0\}\), hence \(z_1=z_2\). Therefore, \(\E\cap\D=\{z_\ast\}\), and the minimizer is unique.
\end{proof}

\begin{remark}[Relation with transversality]
\label{rem:direct-sum-transversality}
If \(\E_0\) and \(\D\) are closed subspaces and \(\Z=\E_0\oplus\D\), then the canonical projections onto \(\E_0\) and \(\D\) are bounded. Hence there is \(C>0\) such that
\[
        \|e_0\|_\Z+\|d\|_\Z\le C\|e_0+d\|_\Z,
        \quad e_0\in\E_0,\ d\in\D.
\]
Equivalently, after replacing \(d\) by \(-d\), the direct-sum condition yields the transversality estimate used in Theorem~\ref{thm:general-dd-existence} with $b=0$.
\end{remark}

A detailed discussion of complementary properties of \(\D\) and \(\E\) in finite dimensions may be found in \cite{ContiHoffmannOrtiz2023Inference, ContiHoffmannOrtiz2023Rates}.

\subsection{Data-Driven reformulation of linear Darcy porous media}

We verify, by way of sanity test, that the classical solutions of the Darcy BVP are recovered from the Data-Driven reformulation. For linear Darcy flow, the local material data set is the fixed graph
\begin{equation} \label{6eDLHt}
        \D_{\loc}
        :=
        \{(e,s)\in\R^N\times\R^N: s=-\K e\},
\end{equation}
and $\D = \{ (e,s)\in \Z : (e(x) , s(x)) \in \D_{\loc} ~ a.e. \}$.
The unconstrained Data-Driven porous-media problem is then (\ref{8mP9Dp}) specialized to this particular choice of $\D_{\loc}$. 

Given $z = (e,s)\in \Z$, a direct pointwise minimization gives
\begin{align}
        d^2(z,\D)
        & = \inf_{ (\tilde e , \tilde s)\in \D} \Vert (e -\tilde e , s -\tilde s) \Vert_{\Z}\nonumber \\
        & = \frac14 \int_\Omega
        \K^{-1}(s+\K e)\cdot(s+\K e)\,\dd x,
        \label{eq:darcy-distance}
\end{align}
which is attained for $(\tilde e , \tilde s) = \frac12 ( e  - \K^{-1}s ,  s - \K e)$.

In the case of a homogeneous problem ($q=0$, $g = 0$, $h= 0$), we have an explicit representation of the distance of any function satisfying \eqref{eq:threefield-geom} and the homogeneous counterpart of \eqref{eq:threefield-bal} to pairs satisfying Darcy's law:

\begin{lemma}[Homogeneous porous-media estimate] \label{YqnSXF}
Let
\begin{align*}
        \E_0:= \{ & (e,s)\in \Z:\ 
        \exists v\in H^1_D(\Omega)\text{ s.t. } \\ &
        (e ,\tests) - (\grad v ,\tests) = 0 ~ \forall \tests\in L^2(\Omega;\mathbb{R}^N), \\ & 
        -(s, \grad \testu) + (\zeta v , \testu)  = 0 ~ \forall \testu\in H^1_D(\Omega)\}.
\end{align*}
Then, for every \(z_0=(e_0,s_0)\in \E_0\),
\begin{equation} \label{AjEz9x}
        d^2(z_0,\D)
        \ge
        \frac12\,\|z_0\|_\Z^2.
\end{equation}
\end{lemma}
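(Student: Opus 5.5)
Start from the explicit formula \eqref{eq:darcy-distance}: for $z_0=(e_0,s_0)$,
\[
        d^2(z_0,\D)=\frac14\int_\Omega \K^{-1}(s_0+\K e_0)\cdot(s_0+\K e_0)\,\dd x .
\]
Expand the quadratic form pointwise, using the symmetry of $\K$:
\[
        \K^{-1}(s_0+\K e_0)\cdot(s_0+\K e_0)
        = \K^{-1}s_0\cdot s_0 + \K e_0\cdot e_0 + 2\, e_0\cdot s_0 .
\]
Integrating and comparing with the definition of $\|\cdot\|_\Z$ gives the identity
\[
        d^2(z_0,\D)=\frac12\|z_0\|_\Z^2+\frac12\int_\Omega e_0\cdot s_0\,\dd x .
\]
The claim \eqref{AjEz9x} therefore reduces to showing that $\int_\Omega e_0\cdot s_0\,\dd x\ge 0$ for every $z_0\in\E_0$.

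This sign is the homogeneous version of the power identity \eqref{eq:power-id}. To avoid needing $s_0\in H(\diver,\Omega;\R^N)$ or a normal trace on $\Gamma_D$, I will work directly with the weak equations defining $\E_0$. Let $v\in H^1_D(\Omega)$ be the associated potential. The first equation, tested with arbitrary $\tests\in L^2(\Omega;\R^N)$, gives $e_0=\grad v$ a.e. Since $v\in H^1_D(\Omega)$, it is an admissible test function $\testu$ in the second equation, which yields
\[
        \int_\Omega s_0\cdot\grad v\,\dd x=\int_\Omega \zeta v^2\,\dd x .
\]
Therefore $\int_\Omega e_0\cdot s_0\,\dd x=\int_\Omega\zeta v^2\,\dd x\ge 0$, because $\zeta\ge0$ a.e. This completes the argument.

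I do not see a genuine obstacle; the only points needing care are these. The cross term must come out with the right sign and factor: $\frac14\cdot 2=\frac12$, which matches the normalization $\frac12$ in $\|\cdot\|_\Z^2$. The homogeneous Neumann condition is encoded weakly, so no boundary term appears. And $\zeta\in L^\infty(\Omega)$ is needed for $\int_\Omega \zeta v^2\,\dd x$ to be finite, which holds under the standing assumptions of Theorem~\ref{thm:classical-darcy-wellposedness}. Note also that the inequality is sharp, with equality, when $\zeta=0$.
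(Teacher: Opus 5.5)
Your proof is correct and follows essentially the paper's argument. You expand the explicit Darcy distance \eqref{eq:darcy-distance} and use the two weak equations defining $\E_0$, tested with $s_0$ and $v$, to get $\int_\Omega e_0\cdot s_0\,\dd x=\int_\Omega\zeta v^2\,\dd x\ge0$; recording the exact identity $d^2(z_0,\D)=\frac12\|z_0\|_\Z^2+\frac12\int_\Omega e_0\cdot s_0\,\dd x$ first is only a slightly more explicit arrangement of the same steps.
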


\begin{proof} Let $(e_0,s_0) \in \E_0$ and $v_0\in H^1_D(\Omega)$ the element whose existence characterizes $\E_0$. Using the first and the second properties successively, with $\tests = s_0$ and $\testu = v_0$, we get:
\begin{align*}
(e_0 , s_0) & = (\grad v_0 , s_0) = (\zeta v_0 , v_0) \geq 0. 
\end{align*}
Using now \eqref{eq:darcy-distance},
\begin{align*}
        d^2(z_0,\D)
        &=
        \int_\Omega
        \frac14\,\K^{-1}(s_0+\K e_0)\cdot(s_0+\K e_0)\,\dd x \\
        &=
        \frac14
        \int_\Omega
        \left(
        \K^{-1}s_0\cdot s_0
        +
        \K e_0\cdot e_0
        +
        2s_0\cdot e_0
        \right)\dd x  \\
        &\ge
        \frac14
        \int_\Omega
        \left(
        \K^{-1}s_0\cdot s_0
        +
        \K e_0\cdot e_0
        \right)\dd x
        =
        \frac12\,\|z_0\|_\Z^2 ,
\end{align*}
which proves (\ref{AjEz9x}).
\end{proof}

In addition, sequential weak lower semicontinuity of \(\mathcal J\) follows from the weak sequential closedness of \(\E\), which renders \(\mathcal I_\E\) weakly sequentially lower semicontinuous, and from the weak lower semicontinuity of \(d^2(\cdot,\D)\), since \(\D\) is a closed linear subspace of the Hilbert space \(\Z\). Existence of solutions of the Darcy Data-Driven problem (\ref{8mP9Dp}) then follows from Theorem~\ref{thm:general-dd-existence}.

It remains to verify that the Data-Driven minimizers are the classical solutions. 

\begin{theorem}[Recovery of the constant-conductivity Darcy problem] \label{2Zy8sW}
Assume that \(\K\) is a fixed symmetric positive-definite matrix, \(\zeta\in L^\infty(\Omega)\) with \(\zeta\ge0\) a.e., and that the boundary conditions and data \(q,g,h\) are such that the weak Darcy problem \eqref{eq:threefield} is well posed. Then, the unconstrained Data-Driven problem \eqref{8mP9Dp} has a unique solution that coincides with that of \eqref{eq:threefield}.

\end{theorem}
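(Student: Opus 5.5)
The plan is to show that the classical solution lies in \(\E\cap\D\), hence attains \(\J=0\). Then I show that any minimizer must also have zero residual and coincide with it, using the affine structure \(\E=z_\ast+\E_0\) together with Lemma~\ref{YqnSXF}.

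\emph{Step 1 (existence of a zero-residual point).} Let \((u_\ast,e_\ast,s_\ast)\in X\) be the unique solution of \eqref{eq:threefield}, which exists by Theorem~\ref{thm:classical-darcy-wellposedness}. Equations \eqref{eq:threefield-geom} and \eqref{eq:threefield-bal} give \(z_\ast:=(e_\ast,s_\ast)\in\E\). Equation \eqref{eq:threefield-const} gives \(s_\ast=-\K e_\ast\) a.e., so \(z_\ast\in\D\). Hence \(\J(z_\ast)=0\le\J(z)\) for all \(z\in\Z\), and \(z_\ast\) is a minimizer with \(\min\J=0\).

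\emph{Step 2 (affine structure).} Let \(z=(e,s)\in\E\) with associated potential \(u\in H^1_g(\Omega)\). Set \(v:=u-u_\ast\in H^1_D(\Omega)\) and \(z_0:=z-z_\ast=(e-e_\ast,s-s_\ast)\). Subtracting the geometric and balance equations for \(z\) and \(z_\ast\) removes the data \(q,h\) and the lifting of \(g\). The result is exactly the pair of homogeneous relations defining \(\E_0\) in Lemma~\ref{YqnSXF}, with potential \(v\). So \(\E=z_\ast+\E_0\).

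\emph{Step 3 (uniqueness).} The closed form \eqref{eq:darcy-distance} is a quadratic form in \(s+\K e\), which is linear in \(z\). Since \(s_\ast+\K e_\ast=0\), we get \(d^2(z,\D)=d^2(z_0,\D)\). Lemma~\ref{YqnSXF} then gives \(d^2(z,\D)\ge\tfrac12\|z-z_\ast\|_\Z^2\). If \(z\) is a minimizer, then \(d^2(z,\D)=0\), so \(z=z_\ast\). The associated potential is then determined by \(\grad u=e_\ast\) with \(u-u_\ast\in H^1_D(\Omega)\). Poincaré's inequality, which holds since \(\Gamma_D\) has positive measure, yields \(u=u_\ast\). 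This shows the Data-Driven solution coincides with the classical one.

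\emph{Main obstacle.} The only delicate point is Step 2: checking that differences of elements of \(\E\) fall exactly into \(\E_0\). This requires the weakly encoded Neumann term and the Dirichlet lifting to cancel. They do, because both appear identically for \(z\) and \(z_\ast\). A further subtlety is that Lemma~\ref{YqnSXF} uses \(\zeta\ge0\) to obtain \((e_0,s_0)\ge0\); this is precisely what makes the cross term in \eqref{eq:darcy-distance} nonnegative. Alternatively, one could invoke Corollary~\ref{cor:direct-sum-existence}, but verifying \(\Z=\E_0\oplus\D\) directly is more work than the coercivity estimate above.
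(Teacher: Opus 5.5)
Your proof is correct. Step 1 is the same as the paper's existence argument, but your uniqueness argument takes a different route.

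The paper argues softly. A minimizer $\bar z$ has $d(\bar z,\D)=0$, hence $\bar z\in\D$ because the Darcy graph is closed. So $\bar z\in\E\cap\D$, its potential solves the weak Darcy problem, and the assumed uniqueness for \eqref{eq:threefield} gives $\bar z=z$.

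You instead shift to the homogeneous problem via $\E=z_\ast+\E_0$. You note that $s+\K e=s_0+\K e_0$, so the closed form \eqref{eq:darcy-distance} is invariant under this shift along $\E$. Lemma~\ref{YqnSXF} then gives the quadratic growth bound $d^2(z,\D)\ge\frac12\|z-z_\ast\|_\Z^2$ for every $z\in\E$. This needs only the existence half of the well-posedness hypothesis, since uniqueness of the fields $(e,s)$ is re-derived from $(e_0,s_0)=(\zeta v_0,v_0)\ge0$. It also gives more than uniqueness: every minimizing sequence of $\J$ converges strongly to $z_\ast$, with $\|z_j-z_\ast\|_\Z\le\sqrt2\,d(z_j,\D)$. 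The paper's version is shorter and relies only on closedness of $\D$ plus the uniqueness hypothesis, without the explicit distance formula or the sign argument.

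Two minor points:
\begin{enumerate}
\item Take the existence of $(u_\ast,e_\ast,s_\ast)$ from the well-posedness hypothesis of the statement, not from Theorem~\ref{thm:classical-darcy-wellposedness}. That theorem has extra assumptions, such as $\Gamma_D$ of positive measure, that are not part of the statement.
\item To identify the potential, you can avoid Poincar\'e's inequality. Observe that $(u,e_\ast,s_\ast)$ solves \eqref{eq:threefield} and invoke the assumed uniqueness directly.
\end{enumerate}
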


\begin{proof} Let $(u,e,s)\in X$ be the solution of problem \eqref{eq:threefield}. Then 
\(u\in H^1_g(\Omega)\) and
\[
        e=\grad u,
        \quad
        s=-\K e \quad \hbox{in} ~ L^2(\Omega;\mathbb{R}^N),
\]
from where \(z=(e,s)\in \E\cap \D\). Hence \(d(z,\D)=0\), so \(z\) is a minimizer
of \eqref{8mP9Dp}.

Conversely, if \(\bar z = ( \bar e, \bar s)\in \E\) is any minimizer of \eqref{8mP9Dp}, then
\[
        0\le d(\bar z,\D)\le d(z,\D)=0,
\]
and therefore \(\bar z\in \D\). Hence \(\bar z=(\grad \bar u,-\K\grad\bar u)\)
for some \(\bar u\in H^1_g(\Omega)\), and \(\bar u\) solves the weak Darcy
problem \eqref{eq:irred}, and hence $(\bar u, \bar e, \bar s)$ solves problem \eqref{eq:threefield}. Uniqueness of the weak Darcy solution gives \(\bar u=u\), and hence
\(\bar z=z\).
\end{proof}

\subsection{Approximation of the material data set} \label{fcfUFg}

We now consider a sequence \((\D_h)\) of porous-media material data sets in \(\Z\), for example generated by laboratory measurements, pore-scale simulations, or homogenized microstructure calculations. The abstract convergence results in this subsection are unchanged from the Data-Driven elasticity framework \cite{CMO2018}; only the interpretation of the phase variables changes from strain--stress pairs to gradient--flux pairs.

We use tools of the calculus of variations for ascertaining the properties of problem (\ref{gBmLMc}), see Appendix~\ref{fCnmub}. We note that the weak topology of \(\Z\) is metrizable on bounded subsets.  Indeed, \(\Z = L^2(\Omega;\mathbb{R}^N) \times L^2(\Omega;\mathbb{R}^N)\) is a separable Hilbert space and $\Z' = \Z$, hence its dual is separable, and the weak topology is metrizable on bounded sets; see, for example, Brezis~\cite[Ch.~3]{Brezis2011}.  Therefore, we shall use throughout the sequential characterization of \(\Gamma\)-convergence on bounded sublevel sets, see Lemma~\ref{Sur2ng}.

For every \(h \in \mathbb{N}\), set
\[
        \J_h(z):=\I_\E(z)+d^2(z,\D_h),
        \quad
        \J(z):=\I_\E(z)+d^2(z,\D).
\]
Then, a \emph{mutatis mutandis} adaptation of \cite{CMO2018} gives the following.

\begin{theorem}[Mosco convergence and uniform transversality imply convergence of Data-Driven problems {\cite{CMO2018}}] \label{udw8S6}
Let \(\Z\) be a reflexive separable Banach space, let \(\D,\D_h\subset \Z\), and
let \(\E\subset \Z\) be weakly sequentially closed. Suppose:
\begin{enumerate}
\item \(\D=M\hbox{-}\lim_{h\to\infty}\D_h\) in \(\Z\);
\item there exist constants \(c>0\) and \(b\ge0\) such that, for every
\(y\in \D_h\) and \(z\in \E\),
\begin{equation}
        \|y-z\|_\Z\ge c\left(\|y\|_\Z+\|z\|_\Z\right)-b.
        \label{eq:equi-trans}
\end{equation}
\end{enumerate}
Then
\[
        \J
        =
        \Gamma\hbox{-}\lim_{h\to\infty} \J_h
\]
with respect to the weak topology of \(\Z\). Moreover, any sequence
\((z_h)\subset \Z\) such that
\[
        \sup_h \J_h(z_h) <+\infty
\]
has a weakly convergent subsequence whose limit belongs to \(\E\).
\end{theorem}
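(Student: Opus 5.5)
We prove the two $\Gamma$-convergence inequalities in the sequential weak topology, using Lemma~\ref{Sur2ng}. Its hypotheses hold here because the weak topology of $\Z$ is metrizable on bounded sets, and the compactness statement supplies the bounded-sublevel setting. Recall that Mosco convergence $\D=M\hbox{-}\lim\D_h$ means two things. First, every $y\in\D$ is the strong limit of some $y_h\in\D_h$. Second, if $y_{h_k}\in\D_{h_k}$ and $y_{h_k}\rightharpoonup y$ weakly, then $y\in\D$.

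\emph{Compactness.} Let $\sup_h\J_h(z_h)<\infty$. Then $z_h\in\E$, and we can choose $y_h\in\D_h$ with $\|z_h-y_h\|_\Z\le d(z_h,\D_h)+1/h$. The uniform transversality estimate \eqref{eq:equi-trans} gives
\[
c(\|y_h\|_\Z+\|z_h\|_\Z)\le b+\sup_h\J_h(z_h)^{1/2}+1,
\]
so both sequences are bounded. By reflexivity there is a weakly convergent subsequence $z_{h_k}\rightharpoonup z$. Since $\E$ is weakly sequentially closed, $z\in\E$.

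\emph{Liminf inequality.} Let $z_h\rightharpoonup z$ and assume, after passing to a subsequence realizing the $\liminf$, that $\liminf\J_h(z_h)<\infty$. Then $z_h\in\E$ eventually, so $z\in\E$. As above, pick near-optimal $y_h\in\D_h$; transversality bounds them. Extract a further subsequence with $y_{h_k}\rightharpoonup y$. The weak-limit half of Mosco convergence gives $y\in\D$. Weak lower semicontinuity of the norm then gives
\[
d(z,\D)\le\|z-y\|_\Z\le\liminf\|z_{h_k}-y_{h_k}\|_\Z\le\liminf d(z_{h_k},\D_{h_k}).
\]
Hence $\J(z)\le\liminf\J_h(z_h)$. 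This is the step where transversality is essential, since without it the near-optimal $y_h$ could escape to infinity.

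\emph{Recovery sequence.} Let $\J(z)<\infty$, so $z\in\E$, and take the constant sequence $z_h=z$. For $\varepsilon>0$ choose $y\in\D$ with $\|z-y\|_\Z\le d(z,\D)+\varepsilon$. The strong-approximation half of Mosco convergence gives $y_h\in\D_h$ with $y_h\to y$ strongly. Then
\[
\limsup d(z,\D_h)\le\lim\|z-y_h\|_\Z=\|z-y\|_\Z\le d(z,\D)+\varepsilon.
\]
Letting $\varepsilon\to0$ gives $\limsup\J_h(z)\le\J(z)$. If $\J(z)=\infty$, the inequality is trivial.

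The main obstacle is the liminf step: we must make sure the weak-limit part of Mosco convergence can be applied to the subsequence of near-projections. This requires that $y_{h_k}$ be bounded, which the uniform transversality constant ensures. The other steps are routine.
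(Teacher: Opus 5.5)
Your proof is correct and is essentially the standard argument. The paper defers this theorem to \cite{CMO2018}, and your structure is the same as the one the paper itself uses in the proof of Theorem~\ref{thm:all-together-gamma}: near-optimal $y_h\in\D_h$ bounded by transversality, the Mosco-liminf condition giving $y\in\D$, and weak lower semicontinuity of the norm, followed by recovery via the constant sequence $z_h=z$ and strong Mosco approximants of a near-projection $y\in\D$ (the bounded-sublevel hypothesis of Lemma~\ref{Sur2ng} follows by taking the infimum over $y\in\D_h$ in \eqref{eq:equi-trans}, i.e.\ $d(x,\D_h)\ge c\|x\|_\Z-b$).
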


The following corollary shows how Theorem~\ref{thm:general-dd-existence} is recovered from Theorem~\ref{udw8S6}. 

\begin{corollary}[Existence for weakly closed data sets {\cite{CMO2018}}]
Let \(\Z\) be a reflexive separable Banach space, and let \(\D,\E\subset \Z\) be weakly sequentially closed. Suppose that the transversality estimate \eqref{eq:equi-trans} holds with \(\D_h=\D\). Then, the Data-Driven problem
\[
        \operatorname*{argmin}_{z\in \E} d^2(z,\D)
\]
has solutions.
\end{corollary}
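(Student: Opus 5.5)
We take the constant sequence \(\D_h:=\D\) for every \(h\) and apply Theorem~\ref{udw8S6}. With this choice, the transversality hypothesis \eqref{eq:equi-trans} is exactly the assumption of the corollary, so only the Mosco convergence \(\D=M\hbox{-}\lim_{h\to\infty}\D_h\) needs to be verified.

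\textbf{Mosco convergence of the constant sequence.} We check the two defining conditions in turn.
\begin{enumerate}
\item The strong-recovery (liminf-set) condition is trivial. Every \(y\in\D\) is the strong limit of the constant sequence \(y_h=y\in\D_h\).
\item The weak-limsup condition reads: if \(y_{h_k}\in\D\) and \(y_{h_k}\rightharpoonup y\), then \(y\in\D\). This is precisely the weak sequential closedness of \(\D\).
\end{enumerate}
Theorem~\ref{udw8S6} therefore gives \(\J=\Gamma\hbox{-}\lim_{h\to\infty}\J_h\) with \(\J_h=\J\) for all \(h\). It also gives the compactness statement: any sequence with \(\sup_h\J_h(z_h)<\infty\) has a weakly convergent subsequence with limit in \(\E\).

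\textbf{Existence of a minimizer.} We follow the fundamental theorem of \(\Gamma\)-convergence.
\begin{enumerate}
\item Set \(m:=\inf_\Z\J\). We may assume \(m<\infty\), since otherwise \(\E=\emptyset\) and there is nothing to prove. We also have \(m\ge0\).
\item Take a minimizing sequence \(z_h\) with \(\J(z_h)\le m+1/h\).
\item By the compactness part of Theorem~\ref{udw8S6}, a subsequence converges weakly to some \(z\in\E\).
\item The liminf inequality of \(\Gamma\)-convergence, applied along this subsequence (legitimate since \(\J_h\) is constant, or via Lemma~\ref{Sur2ng} on bounded sublevel sets), gives \(\J(z)\le\liminf\J(z_h)=m\).
\end{enumerate}
Hence \(z\) minimizes \(\J\) over \(\Z\). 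Since \(\J=\I_\E+d^2(\cdot,\D)\), this means \(z\) solves \(\operatorname*{argmin}_{z\in\E}d^2(z,\D)\).

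\textbf{Main obstacle.} The only delicate point is that the compactness and liminf statements are applied along subsequences, while the \(\Gamma\)-limit is stated for the full index \(h\). For a constant family this is harmless. Alternatively, one can bypass \(\Gamma\)-convergence and argue directly as in Theorem~\ref{thm:general-dd-existence}:
\begin{enumerate}
\item Transversality bounds both the minimizing sequence and its near-projections \(y_h\in\D\).
\item Reflexivity and weak closedness of \(\E\) and \(\D\) pass both to the limit.
\item Weak lower semicontinuity of the norm concludes.
\end{enumerate}
We would mention this direct route as a consistency check.
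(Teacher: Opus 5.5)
Your proposal is correct and follows the paper's intended route: the corollary is meant to be read off from Theorem~\ref{udw8S6} with the constant family $\D_h=\D$, whose Mosco convergence is exactly the weak sequential closedness of $\D$, and it is in substance a restatement of Theorem~\ref{thm:general-dd-existence}, whose direct proof you also sketch. The one slip is the degenerate case. If $\E=\emptyset$ the conclusion is false, not vacuous, and $m=+\infty$ can also arise from $\D=\emptyset$. So nonemptiness of $\E$ and $\D$ should be treated as an implicit hypothesis, as in Theorem~\ref{thm:general-dd-existence}, rather than dismissed as ``nothing to prove''.
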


Next we record the standard consequence of Theorem~\ref{udw8S6} for minimizing sequences. 

\begin{corollary}[Convergence of approximate minimizers] \label{auBXLU}
Assume the hypotheses of Theorem~\ref{udw8S6}. Let \(\varepsilon_h\downarrow 0\),
and let \((z_h)\subset \Z\) be an approximate minimizing sequence in the
sense that
\[
        \J_h(z_h)\le \inf_{z\in \Z}\J_h(z)+\varepsilon_h .
\]
Assume that the limiting problem has finite value, i.e.
\[
        \inf_{z\in \Z}\J(z)<+\infty .
\]
Then \((z_h)\) is relatively weakly sequentially compact in \(\Z\). Every
weak cluster point \(z_\ast\) of \((z_h)\) is a minimizer of the limiting
Data-Driven problem
\[
        \min_{z\in \Z} \J(z)
        =
        \min_{z\in \E} d^2(z,\D).
\]
Moreover,
\[
        \lim_{h\to\infty}\inf_{z\in \Z}\J_h(z)
        =
        \min_{z\in \Z}\J(z).
\]
In particular, if the limiting problem has a unique minimizer \(z_\ast\), then the entire sequence converges weakly, \(z_h \rightharpoonup z_\ast\) in \(\Z\).
\end{corollary}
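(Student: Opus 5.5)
The plan is the standard consequence of $\Gamma$-convergence combined with equi-coercivity. First I fix a competitor to get a uniform bound on the energies. Since $\inf_\Z \J<+\infty$, I take some $\hat z\in\E$ with $\J(\hat z)<\infty$. By the recovery-sequence half of Theorem~\ref{udw8S6} there is $\hat z_h\rightharpoonup \hat z$ with $\limsup_h \J_h(\hat z_h)\le \J(\hat z)$. Consequently
\[
\limsup_{h}\J_h(z_h)\le \limsup_h\bigl(\J_h(\hat z_h)+\varepsilon_h\bigr)\le \J(\hat z)<\infty ,
\]
so $\sup_h \J_h(z_h)<\infty$ after discarding finitely many indices. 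The compactness clause of Theorem~\ref{udw8S6} then gives relative weak sequential compactness of $(z_h)$, with every cluster point lying in $\E$. This clause itself rests on the transversality estimate \eqref{eq:equi-trans}: a bound on $d(z_h,\D_h)$ yields bounded $z_h$, and reflexivity does the rest.

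Second, I identify the cluster points as minimizers. Let $z_{h_k}\rightharpoonup z_\ast$. I use the fact that $\Gamma$-convergence is inherited by subsequences. The liminf inequality gives
\[
\J(z_\ast)\le\liminf_k \J_{h_k}(z_{h_k})\le \liminf_k\Bigl(\inf_\Z\J_{h_k}+\varepsilon_{h_k}\Bigr).
\]
For an arbitrary $w\in\Z$ with $\J(w)<\infty$, a recovery sequence $w_h$ gives
\[
\limsup_h\inf_\Z \J_h\le \limsup_h \J_h(w_h)\le \J(w).
\]
Combining the two yields $\J(z_\ast)\le \inf_\Z \J$. Hence $z_\ast$ is a minimizer, the infimum is attained, and $\min_\Z\J=\min_{\E}d^2(\cdot,\D)$ because $\J=\I_\E+d^2(\cdot,\D)$.

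Third, I prove convergence of the infima by a subsequence argument. Suppose some subsequence had $\liminf \inf \J_h<\min\J$. I extract a further subsequence along which $z_h$ converges weakly; this is possible by the first step. The chain in the second step then gives $\min\J\le\liminf\inf\J_h$, which is a contradiction. Together with $\limsup\inf\J_h\le\min\J$ from the recovery sequences, this gives $\lim_h\inf_\Z\J_h=\min_\Z\J$.

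Finally, under uniqueness I use the Urysohn property. Every subsequence of $(z_h)$ has a further subsequence converging weakly to the unique minimizer $z_\ast$, so the whole sequence converges. Metrizability of the weak topology on bounded sets, which we have by Lemma~\ref{Sur2ng}, justifies working with sequences throughout.

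The main obstacle is the first step. The compactness clause of Theorem~\ref{udw8S6} needs a uniform energy bound along the approximate minimizers. That bound is supplied only through the recovery sequence for a finite-energy competitor, which is exactly why the hypothesis $\inf\J<\infty$ is needed. Everything else is the abstract fundamental theorem of $\Gamma$-convergence.
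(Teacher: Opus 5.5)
Your proposal is correct and is essentially the paper's argument: the paper cites Theorem~\ref{udw8S6} for $\Gamma$-convergence and compactness and then appeals to Theorem~\ref{K87zb7}, whose proof is your second through fourth steps. Your first step, the uniform energy bound obtained from a recovery sequence for a finite-energy competitor, spells out a point the paper leaves implicit. One small correction: metrizability of the weak topology on bounded sets comes from separability of $\Z$, as noted before Lemma~\ref{Sur2ng}, not from Lemma~\ref{Sur2ng} itself, which assumes it.
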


\begin{proof}
By Theorem 3.8, \(\J_h\) \(\Gamma\)-converges to \(\J\) with respect to the weak topology of \(\Z\), and bounded-energy sequences are relatively weakly sequentially compact. The assertion is therefore the standard compactness and convergence-of-quasi-minimizers property of \(\Gamma\)-convergence, applied in the weak topology; see Theorem A.10.
\end{proof}

Finally, we record the porous-media version of the standard local-data approximation result. The result is the same Mosco-convergence statement as in \cite{CMO2018}, with local states \(\xi=(e,s)\in\R^N\times\R^N\).

\begin{lemma}[Fine and uniform approximation {\cite{CMO2018}}] \label{7YAxr2}
Let \(\Z=L^2(\Omega;\R^N)\times L^2(\Omega;\R^N)\), and let
\[
        \D_h
        =
        \{z\in \Z:z(x)\in \D_{\loc,h}(x)\text{ for a.e. }x\in\Omega\},
\]
where \(\D_{\loc,h}(x)\subset\R^N\times\R^N\). Let
\[
        \D
        =
        \{z\in \Z:z(x)\in \D_{\loc}\text{ for a.e. }x\in\Omega\},
\]
with the fixed constant-conductivity graph
\[
        \D_{\loc}
        =
        \{(e,s)\in\R^N\times\R^N:s=-\K e\}.
\]
Assume that:
\begin{enumerate}
\item there exists \(\rho_h\downarrow0\) such that
\[
        d_x(\xi,\D_{\loc,h}(x))\le \rho_h
        \quad
        \forall \xi\in \D_{\loc}
        \quad\text{for a.e. }x\in\Omega;
\]
\item there exists \(t_h\downarrow0\) such that
\[
        d_x(\xi,\D_{\loc})\le t_h
        \quad
        \forall \xi\in \D_{\loc,h}(x)
        \quad\text{for a.e. }x\in\Omega.
\]
\end{enumerate}
Here \(d_x\) is the pointwise distance induced by the local version of the constant \(\K\)-weighted norm; in this specialization the metric itself is independent of \(x\). Then,
\[
        \D=M\hbox{-}\lim_{h\to\infty}\D_h
        \quad\text{in }\Z.
\]
\end{lemma}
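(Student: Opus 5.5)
We have to prove Mosco convergence: (i) liminf/weak-closure condition: if $y_h\in\D_h$ and $y_h\rightharpoonup y$ weakly in $\Z$ (along a subsequence), then $y\in\D$; (ii) recovery condition: every $y\in\D$ is the strong limit of some $y_h\in\D_h$. The plan is to exploit that $\D_{\loc}$ is a \emph{linear} graph, so $\D$ is a closed linear subspace of $\Z$, hence weakly closed, and to reduce both conditions to pointwise estimates integrated over the bounded domain $\Omega$.

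For the recovery condition, I fix $y\in\D$, so $y(x)\in\D_{\loc}$ for a.e.\ $x$. By hypothesis (1), for a.e.\ $x$ there is a point of $\D_{\loc,h}(x)$ within local distance $\rho_h$ of $y(x)$ (up to an arbitrarily small extra slack, say $\rho_h+1/h$, if the infimum is not attained). I then select $y_h(x)\in\D_{\loc,h}(x)$ with $d_x(y(x),y_h(x))\le\rho_h+1/h$. The delicate point is that $y_h$ must be measurable. I would handle this by a measurable selection theorem (Kuratowski--Ryll-Nardzewski / Aumann), assuming, as is implicit in \cite{CMO2018}, that $x\mapsto\D_{\loc,h}(x)$ is a closed-valued measurable multifunction; in the constant-metric case, where $\D_{\loc,h}$ is independent of $x$, this is simply a Borel selection of a nearest-point-type map composed with $y$. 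Then
\[
\|y_h-y\|_\Z^2\le |\Omega|\,(\rho_h+1/h)^2\to0,
\]
which also gives $y_h\in\Z$, since $y\in\Z$ and $\Omega$ is bounded. So $y_h\to y$ strongly.

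For the closure condition, I take $y_h\in\D_h$ with $y_h\rightharpoonup y$. By hypothesis (2), for a.e.\ $x$ we have $d_x(y_h(x),\D_{\loc})\le t_h$. Since $\D_{\loc}$ is a linear subspace of $\R^{2N}$, the distance is attained by the orthogonal projection $P$ in the local $\K$-weighted inner product. Indeed, by \eqref{eq:darcy-distance} this projection is explicit: $P(e,s)=\tfrac12(e-\K^{-1}s,\,s-\K e)$, which is linear and continuous. Setting $\tilde y_h:=Py_h\in\D$ pointwise, I get $\|y_h-\tilde y_h\|_\Z\le |\Omega|^{1/2}t_h\to0$, so $\tilde y_h\rightharpoonup y$ as well. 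Since $\D$ is a closed subspace of $\Z$ (it is the kernel of the bounded operator $(e,s)\mapsto s+\K e$), it is convex and strongly closed, hence weakly closed by Mazur. Therefore $y\in\D$.

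Both conditions together give $\D=M\hbox{-}\lim\D_h$. I expect the only genuine obstacle to be the measurable selection in the recovery step. The closure step is routine here, precisely because the limit set is a linear graph; for a general nonconvex $\D_{\loc}$ one would need the relaxation of \cite{CMO2018} instead.
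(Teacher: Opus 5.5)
The paper gives no proof of its own here; it only cites \cite{CMO2018}. Your argument is the standard one behind that citation, and it is correct. For the recovery condition you use a measurable near-nearest selection with $\|y_h-y\|_\Z\le|\Omega|^{1/2}(\rho_h+1/h)$. For the liminf condition, taken correctly along subsequences, you project pointwise and $\K$-orthogonally onto the linear graph with error at most $|\Omega|^{1/2}t_h$, and then use weak closedness of the closed subspace $\D=\ker\{(e,s)\mapsto s+\K e\}$.

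Your measurability caveat is a real hypothesis that the statement leaves implicit, not a technicality. One correction: the paper's remark that the metric is $x$-independent does not make $\D_{\loc,h}(x)$ $x$-independent, so your general measurable-multifunction assumption is the one actually needed. Without it the lemma fails. Take $\D_{\loc,h}(x)=\Lambda_h+w\,\chi_V(x)$, where $\Lambda_h\subset\D_{\loc}$ is a lattice with covering radius at most $\rho_h$, $w\in\D_{\loc}\setminus\Lambda_h$, and $V\subset\Omega$ is non-measurable. Then (1) holds, (2) holds trivially, but $\D_h=\emptyset$, so recovery is impossible.

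Your pointwise constructions also give the global bounds $d(z,\D_h)\le d(z,\D)+|\Omega|^{1/2}\rho_h$ and $d(z,\D)\le d(z,\D_h)+|\Omega|^{1/2}t_h$. The paper asserts these without proof in its discussion of optimal diagonal strategies.
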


\subsection{A particular case: $\D$ consisting of a single element}\label{sec:single-el}

Let $(\tilde e , \tilde s) \in \Z$ and suppose that $\D = \{ (\tilde e , \tilde s) \}$, i.e., $\D$ is made of a single element that may be synthesized from a larger set of data. In this case,
\begin{align*}
d^2 ((e,s) , \D) = \frac12 \k \Vert e -\tilde e\Vert^2_{L^2(\Omega;\R^N)} +  \frac12 \k^{-1}\Vert s -\tilde s\Vert^2_{L^2(\Omega;\R^N)} , \quad (e,s) \in \Z,
\end{align*} 
where $\k$ is now a physical scaling factor. 

Let us consider for simplicity the case $g = 0$, $h = 0$, the general case being recovered by a standard shifting. Rather than prescribing the feasibility condition $(e,s)\in \E$ introducing the indicator function as in \eqref{gBmLMc}, we can prescribe it using Lagrange multipliers $(\lambda,\mu) \in X_\lambda \times X_\mu$, the former to impose the balance equation \eqref{eq:pm-cons} and the latter to prescribe the geometric condition \eqref{eq:pm-geom}. Spaces $X_\lambda$ and $X_\mu$ are specified below. The problem to be solved is then
\begin{align}
\arg \inf_{(u, e, s) \in X } \sup_{(\lambda , \mu) \in X_\lambda \times X_\mu} {\mathcal K} ( (u, e, s) , (\lambda , \mu) ),\label{eq:probl-min-max}
\end{align} 
where
\begin{align*}
{\mathcal K} ( (u, e, s) , (\lambda , \mu) ) = d^2 ((e,s) , \D) + \langle \lambda ,  \diver s + \zeta u - q\rangle_\lambda + \langle \mu , e - \grad u \rangle_\mu. 
\end{align*} 
At this point, spaces $X_\lambda$ and $X_\mu$ and the dualities $\langle \cdot ,  \cdot \rangle_\lambda$  and $\langle \cdot , \cdot \rangle_\mu$ need to be specified depending on the functional setting adopted for the problem. In reference~\cite{BGBA-2025} the primal form of the problem was analyzed and approximated using a Galerkin/least-squares method, whereas in reference~\cite{CABG-2026} both the primal and the dual approaches were considered and approximated using a Variational Multiscale (VMS) formulation. 

As for the general case, we will restrict our attention to the primal form of the problem. In this case, $X_\lambda = H^1_D(\Omega)$, $X_\mu = L^2(\Omega;\R^N)$ and
\begin{align}
{\mathcal K} ( (u, e, s) , (\lambda , \mu) ) = d^2 ((e,s) , \D) + \langle \lambda ,  \diver s + \zeta u - q\rangle_\Omega + ( \mu , e - \grad u ). \label{eq:5field-f}
\end{align} 

Let $\testl\in H^1_D(\Omega)$ and $\testm\in L^2(\Omega;\R^N)$ be the test functions for $\lambda$ and $\mu$, respectively. Problem \eqref{eq:probl-min-max} is then equivalent to solve the system of variational equations: find $(u,e,s)\in X$ and $(\lambda,\mu) \in H^1_D(\Omega) \times L^2(\Omega;\R^N)$ such that
\begin{subequations} \label{eq:five-field}
\begin{alignat}{3}
( \zeta \lambda , \testu) - (\mu , \grad\testu) & = 0 \qquad && \forall \testu \in H^1_D(\Omega), \label{eq:five-field-u}\\
\k ( e , \teste )  + (\mu , \teste ) & = \k ( \tilde e , \teste )\qquad && \forall \teste \in L^2(\Omega; \R^N), \label{eq:five-field-e}\\
\k^{-1} ( s, \tests ) - (\grad \lambda , \tests ) & = \k^{-1} ( \tilde s, \tests) \qquad && \forall \tests \in L^2(\Omega; \R^N), \label{eq:five-field-s}\\
( \zeta  u ,\testl ) - (s, \grad \testl ) & = \langle q ,\testl \rangle_\Omega   \qquad && \forall \testl \in H^1_D(\Omega), \label{eq:five-field-l}\\
- (\grad u , \testm) + (e , \testm) & = 0 \qquad && \forall \testm \in L^2(\Omega; \R^N). \label{eq:five-field-m}
\end{alignat} 
\end{subequations}

Instead of the abstract transversality condition required in Theorem~\ref{thm:general-dd-existence}, now one can show that this problem is inf-sup stable, and therefore that the solution exists and is unique~\cite{BGBA-2025}:

\begin{theorem}[Existence and uniqueness when $\D$ consists of a single element]  Let $M = H^1_D(\Omega)\times L^2(\Omega; \R^N)$ and $Y = X \times M$. For each $0 \not = (u, e, s, \lambda,\mu)\in Y $ there exists $0 \not = (\testu, \teste, \tests, \testl, \testm) \in Y$ such that
\begin{align}
 & ( \zeta \lambda , \testu) - (\mu , \grad\testu) 
+ \k ( e , \teste )  + (\mu , \teste )  \nonumber \\
& + \k^{-1} ( s, \tests ) - (\grad \lambda , \tests )  
+ ( \zeta  u ,\testl ) - (s, \grad \testl )  - (\grad u , \testm) + (e , \testm) \nonumber \\
&  \geq c \Vert (u, e, s, \lambda,\mu) \Vert_Y \Vert (\testu, \teste, \tests, \testl, \testm) \Vert_Y \label{eq:inf-sup-five}
\end{align}
where $c > 0$  depends on $\Omega$, $\k$ and the function $0 \leq \zeta \in L^\infty(\Omega)$. Furthermore, the bilinear form in the left-hand-side is continuous in $Y$. As a consequence, for any $q\in H^{-1}(\Omega)$, $\tilde e\in  L^2(\Omega; \R^N)$ and  $\tilde s\in  L^2(\Omega; \R^N)$, problem~\eqref{eq:five-field} admits a unique solution $(u, e, s, \lambda,\mu)\in Y $.
\end{theorem}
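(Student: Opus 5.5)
The strategy is to build an explicit test function, exploiting the saddle-point structure of the five-field system. Write $B$ for the bilinear form on the left of \eqref{eq:inf-sup-five}, and put $\|(u,e,s,\lambda,\mu)\|_Y^2=\|u\|_{H^1}^2+\|e\|^2+\|s\|^2+\|\lambda\|_{H^1}^2+\|\mu\|^2$.

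Continuity is immediate from Cauchy--Schwarz, since every term pairs $L^2$ quantities or gradients, and $\zeta\in L^\infty$. The work is in the inf-sup estimate. Given $U=(u,e,s,\lambda,\mu)$, I will take the test function as a linear combination of three pieces and then tune the weights.

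\emph{Diagonal choice.} Set $(\testu,\teste,\tests,\testl,\testm)=(-\lambda,e,s,u,-\mu)$. The coupling terms then cancel pairwise: $(\zeta\lambda,-\lambda)$ against nothing, $-(\mu,\grad(-\lambda))=(\mu,\grad\lambda)$ against $-(\grad\lambda,s)$, and so on. The aim is for this choice to yield $\k\|e\|^2+\k^{-1}\|s\|^2$ plus sign-indefinite remainders. To secure genuine antisymmetry, I will instead use the sign pattern $(\testu,\teste,\tests,\testl,\testm)=(\lambda,e,s,-u,\mu)$ and check which sign makes the off-diagonal blocks skew. The $(u,\lambda)$ block contains $(\zeta\lambda,\testu)+(\zeta u,\testl)$, and with $\testu=\lambda$, $\testl=-u$ this gives $\|\zeta^{1/2}\lambda\|^2-\|\zeta^{1/2}u\|^2$, which is harmless but indefinite. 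The plan is to absorb such terms with further corrections.

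\emph{Corrections.} The diagonal choice controls $e$ and $s$. To control $\mu$, take $\teste=\mu$; this produces $\|\mu\|^2+\k(e,\mu)$. To control $\grad u$, take $\testm=-\grad u$ together with the geometric equation; this gives $\|\grad u\|^2-(e,\grad u)$. Poincar\'e on $H^1_D$, which is valid since $\Gamma_D$ has positive measure, then controls $\|u\|_{H^1}$. To control $\grad\lambda$, take $\tests=-\grad\lambda$; this gives $\|\grad\lambda\|^2-\k^{-1}(s,\grad\lambda)$, and Poincar\'e again gives $\|\lambda\|_{H^1}$. Summing with weights $1,\delta_1,\delta_2,\delta_3$ and applying Young's inequality, each cross term $(e,\mu)$, $(e,\grad u)$, $(s,\grad\lambda)$ is bounded by a small multiple of the newly controlled quantity plus a multiple of $\|e\|^2$ or $\|s\|^2$. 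The diagonal part absorbs the latter once the $\delta_i$ are chosen small.

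\emph{Main obstacle.} The hard step is the indefinite reaction contributions $(\zeta\lambda,\testu)$ and $(\zeta u,\testl)$ together with the remaining couplings, for instance $-(\mu,\grad\testu)$ when $\testu$ is nonzero. I would first take $\testu=0$ and $\testl=0$ in the diagonal piece. The $B$-terms involving $u$ and $\lambda$ then appear only through the $\mu$- and $s$-tests, and those are exactly what the corrections exploit. The five-field form then reduces to the term $\k\|e\|^2+(\mu,e)+\k^{-1}\|s\|^2-(\grad\lambda,s)-(\grad u,\mu)+(e,\mu)$ plus corrections. If the indefinite terms $(\mu,e)$ and $(\mu,\grad u)$ cannot be closed this way, I will fall back on the Babu\v{s}ka--Brezzi framework instead. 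In that framework one takes primal variables $(u,e,s)$ and multipliers $(\lambda,\mu)$, so the form $a$ is $\k(e,\teste)+\k^{-1}(s,\tests)$. It must be coercive on the kernel of the constraint form $b$. On that kernel $e=\grad u$ with $u$ solving the reaction equation with flux $s$, so $\|e\|+\|s\|$ controls $\|u\|_{H^1}$ by Poincar\'e. For the inf-sup condition on $b$, given $(\lambda,\mu)$ one takes $e=\mu$, $s=-\grad\lambda$ and $u=0$, which yields $\|\mu\|^2+\|\grad\lambda\|^2$. Brezzi's theorem then gives \eqref{eq:inf-sup-five}, and existence and uniqueness follow by Babu\v{s}ka's theorem (Banach--Ne\v{c}as--Babu\v{s}ka) using symmetry of $B$.
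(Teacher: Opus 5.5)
The paper gives no proof of its own here; it only asserts inf-sup stability and refers to \cite{BGBA-2025}. Your fallback argument is a complete and correct proof. Write $x=(u,e,s)$ and $y=(\lambda,\mu)$. Then the form is $a(x,\delta x)+b(\delta x,y)+b(x,\delta y)$ with $a=\k(e,\teste)+\k^{-1}(s,\tests)$.
\begin{itemize}
\item On $\ker b$ one has $e=\grad u$, so Poincar\'e on $H^1_D(\Omega)$ gives kernel coercivity. The reaction equation is not even needed.
\item The choice $(\testu,\teste,\tests)=(0,\mu,-\grad\lambda)$ gives the inf-sup condition for $b$.
\item Brezzi's theorem yields an isomorphism, hence the global inf-sup, and symmetry of $B$ supplies the second Banach--Ne\v{c}as--Babu\v{s}ka condition.
\end{itemize}
Brezzi's stability constant depends only on $\|a\|$, the kernel coercivity constant and $\beta$, and none of these involves $\zeta$ here. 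So you actually get a $\zeta$-independent inf-sup constant; only the continuity bound involves $\|\zeta\|_{L^\infty}$.

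The first half of your proposal, however, is wrong as written and should be fixed or dropped.
\begin{itemize}
\item For $(-\lambda,e,s,u,-\mu)$ only the $(\mu,e)$ terms cancel. The terms $(\mu,\grad\lambda)-(\grad\lambda,s)-(s,\grad u)+(\grad u,\mu)$ and $\|\zeta^{1/2}u\|^2-\|\zeta^{1/2}\lambda\|^2$ survive.
\item The choice $(\lambda,e,s,-u,\mu)$ is no better, because both choices pair $\testu$ with $\lambda$.
\end{itemize}
For a form with this structure the skew choice is $(\delta x,\delta y)=(x,-y)$, i.e.\ $(\testu,\teste,\tests,\testl,\testm)=(u,e,s,-\lambda,-\mu)$. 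This gives exactly $\k\|e\|^2+\k^{-1}\|s\|^2$, since the reaction terms $(\zeta\lambda,u)-(\zeta u,\lambda)$ cancel as well.

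Your \emph{main obstacle} is therefore an artifact of the wrong pairing. With $\testu=\testl=0$, the couplings $(\grad u,\mu)$ and $(\grad\lambda,s)$ stay with unit weight and cannot be absorbed by small-weight corrections. With the correct diagonal, your three corrections close at once. For $V=(u,\,e+\delta_1\mu,\,s-\delta_1\grad\lambda,\,-\lambda,\,-\mu-\delta_2\grad u)$,
\[
B(U,V)=\k\|e\|^2+\k^{-1}\|s\|^2+\delta_1\bigl(\|\mu\|^2+\k(e,\mu)+\|\grad\lambda\|^2-\k^{-1}(s,\grad\lambda)\bigr)+\delta_2\bigl(\|\grad u\|^2-(e,\grad u)\bigr).
\]
Young's inequality plus Poincar\'e finish the estimate for suitable $\delta_1,\delta_2$ depending only on $\k$ (e.g.\ $\delta_1=\min\{\k,\k^{-1}\}$, $\delta_2=\k/2$), and $\|V\|_Y\le C\|U\|_Y$.

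This direct route avoids invoking Brezzi's theorem and produces an explicit constant. Both routes carry over verbatim to the finite element spaces of the numerical section, because $\grad\,\mathrm{CG}_{k+1}\subset[\mathrm{DG}_k]^N$ makes $\grad u_h$ and $\grad\lambda_h$ admissible discrete test functions.
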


Interestingly, well-posedness of the dual form depends on an appropriate choice of the scaling factor $\k$ (see \cite{CABG-2026}).

\section{Approximation by spatial discretization}

The convergence statement in Corollary~\ref{auBXLU} is independent of how the approximate minimizers of the Data-Driven functionals are generated. In computations, they are typically obtained by a method of discretization of the spatial domain $\Omega$, such as the finite-element method (FEM), and by a discrete enforcement of the field equations. Evidently, an interplay between the two types of approximation, data and spatial, is to be expected. In this section, we investigate conditions ensuring convergence of the combined approximations.

\subsection{Spatial discretization}

Let \((\Z_k)_{k\in\mathbb N}\) be finite-dimensional subspaces of \(\Z\) satisfying the strong density property
\[
        \forall z\in \Z,\quad
        \exists z_k\in \Z_k
        \quad\text{such that}\quad
        z_k\to z
        \quad\text{strongly in } \Z ,
\]
corresponding, e.g., to a finite-element discretization of $\Omega$. Let \((\E_k)_{k\in\mathbb N}\) be the corresponding discrete material-independent feasible sets, with \(\E_k\subset \Z_k\). The set \(\E_k\) consists of finite-element phase fields satisfying the discrete geometric equations, discrete conservation laws, and discrete boundary conditions. Concrete constructions of discrete spaces meeting these requirements for Darcy-type saddle-point problems---either through compatible mixed interpolations or through stabilized equal-order formulations---can be found in~\cite{B-boffi-et-al,MasudHughes2002,BadiaCodina2010,BurmanHansboLarson2023}.

As we shall see, the correct consistency assumption is the variational convergence 
\[
        \E=M\hbox{-}\lim_{k\to\infty}\E_k
        \quad\text{in } \Z .
\]
Equivalently, the following two conditions hold, see Appendix~\ref{fCnmub}, Def.~\ref{XjU2TR}:
\[
\begin{aligned}
&\text{if } z_k\in \E_k,\quad z_k\rightharpoonup z
        \quad\text{weakly in } \Z,
        \quad\text{then } z\in \E,\\
&\text{for every } z\in \E,\quad
        \exists z_k\in \E_k
        \quad\text{such that}\quad
        z_k\to z
        \quad\text{strongly in } \Z .
\end{aligned}
\]
The first condition is the discrete compactness, or weak consistency, property. It says that weak limits of discrete solutions satisfy the continuous field equations. The second condition is the finite-element recovery property. It says that every continuously feasible state can be approximated strongly by discrete feasible states.

The spatially discretized problem is now 
\begin{equation} 
        \inf_{z\in \Z} \J_{h,k}(z) ,
\end{equation}
where
\begin{equation} 
        \J_{h,k}(z) := \I_{\E_k}(z)+d^2(z,\D_h) .
\end{equation}
The convergence properties of the combined approximation are ascertained next. 

\begin{theorem}[Fully discrete compactness and convergence] \label{LHUWen}
Assume that
\begin{equation} \label{CCK2LV}
        \D=M\hbox{-}\lim_{h\to\infty}\D_h
        \quad\text{and}\quad
        \E=M\hbox{-}\lim_{k\to\infty}\E_k
        \quad\text{in } \Z .
\end{equation}
Assume also the uniform transversality estimate
\[
        \|y-z\|_{\Z}
        \ge
        c\bigl(\|y\|_{\Z}+\|z\|_{\Z}\bigr)-b
        \quad
        \forall y\in \D_h,\ \forall z\in \E_k ,
\]
with constants \(c>0\) and \(b\ge 0\) independent of \(h\) and \(k\). Let \(k=k(h)\to\infty\), and define \(\widehat{\J}_h:=\J_{h,k(h)}\). Then,
\[
        \J=\Gamma\hbox{-}\lim_{h\to\infty}\widehat{\J}_h
\]
with respect to the weak topology of \(\Z\). Moreover, every sequence \((z_h)\subset\Z\) satisfying
\[
        \sup_h \widehat{\J}_h(z_h)<+\infty
\]
has a weakly convergent subsequence in \(\Z\), and every weak cluster point belongs to \(\E\).
\end{theorem}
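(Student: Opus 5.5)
We will prove the three assertions (compactness, liminf, recovery) directly in the sequential framework, which is legitimate because the weak topology of \(\Z\) is metrizable on bounded sets and all relevant sequences will be bounded.

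\emph{Compactness.} Let \(\sup_h \widehat\J_h(z_h)\le C<\infty\). Then \(z_h\in\E_{k(h)}\) and \(d(z_h,\D_h)\le C^{1/2}\). We pick \(y_h\in\D_h\) with \(\|z_h-y_h\|_\Z\le d(z_h,\D_h)+1/h\). The uniform transversality estimate, applied with \(y=y_h\in\D_h\) and \(z=z_h\in\E_{k(h)}\), gives
\[
c\bigl(\|y_h\|_\Z+\|z_h\|_\Z\bigr)\le C^{1/2}+1+b .
\]
So both sequences are bounded, and by reflexivity a subsequence satisfies \(z_h\rightharpoonup z\) and \(y_h\rightharpoonup y\). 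Since \(k(h)\to\infty\), the discrete compactness half of \(\E=M\hbox{-}\lim\E_k\) yields \(z\in\E\). This is the step where uniformity in \(h\) and \(k\) is essential.

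\emph{Liminf inequality.} Let \(z_h\rightharpoonup z\). If \(\liminf\widehat\J_h(z_h)=\infty\) there is nothing to prove. Otherwise we pass to a subsequence realizing the liminf with bounded energy. The compactness step then gives \(z\in\E\), together with \(y_h\rightharpoonup y\) along a further subsequence. The weak half of \(\D=M\hbox{-}\lim\D_h\) gives \(y\in\D\). Weak lower semicontinuity of the norm then yields
\[
d(z,\D)\le\|z-y\|_\Z\le\liminf\|z_h-y_h\|_\Z=\liminf d(z_h,\D_h).
\]
Since \(\I_\E(z)=0\), this proves \(\J(z)\le\liminf\widehat\J_h(z_h)\).

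\emph{Recovery sequence.} This is the main point, because the two approximations interact through the fully discrete distance. If \(\J(z)=\infty\), we take \(z_h=z\) (or any sequence converging weakly to \(z\)); the inequality \(\limsup\widehat\J_h(z_h)\le\J(z)\) then holds trivially. Otherwise \(z\in\E\), and the recovery half of \(\E=M\hbox{-}\lim\E_k\) gives \(z_k\in\E_k\) with \(z_k\to z\) strongly. We set \(z_h:=z_{k(h)}\in\E_{k(h)}\), so that \(\widehat\J_h(z_h)=d^2(z_h,\D_h)\). The key estimate is that the map \(w\mapsto d(w,\D_h)\) is \(1\)-Lipschitz uniformly in \(h\), which gives
\[
d(z_h,\D_h)\le\|z_h-z\|_\Z+d(z,\D_h).
\]
It therefore remains to show \(\limsup_h d(z,\D_h)\le d(z,\D)\). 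Given \(\varepsilon>0\), we choose \(y\in\D\) with \(\|z-y\|_\Z\le d(z,\D)+\varepsilon\). The strong half of \(\D=M\hbox{-}\lim\D_h\) gives \(y_h\in\D_h\) with \(y_h\to y\) strongly, so \(\limsup d(z,\D_h)\le\limsup\|z-y_h\|_\Z=\|z-y\|_\Z\le d(z,\D)+\varepsilon\). Letting \(\varepsilon\to0\) and combining with \(\|z_h-z\|_\Z\to0\), we obtain \(\limsup\widehat\J_h(z_h)\le\J(z)\).

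Together with the liminf inequality, this establishes \(\J=\Gamma\hbox{-}\lim\widehat\J_h\) via the sequential characterization of Lemma~\ref{Sur2ng}. Boundedness of recovery sequences is automatic, since they converge strongly.
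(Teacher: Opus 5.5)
Your proposal is correct and takes the same approach as the paper. Uniform transversality bounds the bounded-energy sequences; the weak halves of the two Mosco convergences, combined with the near-optimal choice $y_h\in\D_h$, give $z\in\E$ and the liminf inequality; the strong halves give recovery; and Lemma~\ref{Sur2ng} concludes. The paper only sketches this argument, and your write-up (including the uniform $1$-Lipschitz bound on $d(\cdot,\D_h)$ for the recovery sequence) supplies the omitted details in the same way the paper later does in the proof of Theorem~\ref{thm:all-together-gamma}.
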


\begin{proof}
The statement is the standard stability of \(\Gamma\)-convergence under Mosco convergence of the constraint sets and of the material data sets. The Mosco convergence of \(\E_k\) gives the liminf stability of the discrete field equations and the strong recovery sequences for feasible limits. The Mosco convergence of \(\D_h\) gives the corresponding liminf and recovery properties for the distance-to-data term. The uniform transversality estimate gives equicoercivity, hence weak sequential compactness of bounded-energy sequences. The conclusion then follows from the sequential characterization of \(\Gamma\)-convergence on bounded sublevel sets and from the compactness principle recalled in Appendix~\ref{fCnmub}.
\end{proof}

We see from (\ref{CCK2LV}), that there are two distinct approximation mechanisms at play: approximation of the material data set; and approximation of the field equations. The first convergence controls the distance-to-data term. The second controls the discretization of compatibility, equilibrium, and boundary conditions. The fully discrete problem combines both effects through \(\J_{h,k}\). The strong density of the spaces \(\Z_k\) and the strong recovery property of the sets \(\E_k\) are consistency assumptions. They ensure that the finite-dimensional problems do not lose admissible limiting states. This scheme covers the usual finite-element situation in which the discrete fields do not satisfy the continuous equations exactly, but their residuals vanish in a variationally consistent sense. Theorem~\ref{LHUWen} then sets forth conditions under which both types of approximations result in the overall convergence of the scheme.

A direct application of Corollary~\ref{auBXLU} now gives the following. 

\begin{corollary}[Convergence of finite-element approximate minimizers] \label{5Hx8sa}
Assume the hypotheses of the preceding theorem. Let \(k=k(h)\to\infty\) as \(h\to\infty\), and let \((z_h) \subset \Z\) be approximate minimizers of the fully discrete problems,
\[
        \J_{h,k(h)}(z_h)
        \le
        \inf_{z\in\Z}\J_{h,k(h)}(z)+\varepsilon_h,
        \quad
        \varepsilon_h\downarrow 0 .
\]
Then \((z_h)\) is relatively weakly sequentially compact in \(\Z\). Every weak cluster point \(z_\ast\) minimizes the limiting Data-Driven functional $\J$. Moreover,
\[
        \lim_{h\to\infty}
        \inf_{z\in\Z}\J_{h,k(h)}(z)
        =
        \min_{z\in\Z}\J(z),
\]
provided the right-hand side is finite. If the limiting problem has a unique minimizer \(z_\ast\), then the whole sequence converges weakly, \(z_h\rightharpoonup z_\ast\) \quad\text{in } \(\Z\).
\end{corollary}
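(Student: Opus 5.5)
The plan is to deduce the statement from Theorem~\ref{LHUWen} and the abstract compactness and convergence of quasi-minimizers for $\Gamma$-convergence (Theorem A.10 of Appendix~\ref{fCnmub}), essentially as Corollary~\ref{auBXLU} was deduced from Theorem~\ref{udw8S6}. The first step is \emph{compactness}. Because $\inf_\Z \J<+\infty$, there exists $\bar z\in\E$ with $d(\bar z,\D)<\infty$. By the recovery half of the Mosco convergence $\E=M\hbox{-}\lim\E_k$, we pick $\bar z_k\in\E_{k}$ with $\bar z_k\to\bar z$ strongly. By the recovery half of $\D=M\hbox{-}\lim\D_h$ applied to a near-optimal $\bar y\in\D$, we pick $\bar y_h\in\D_h$ with $\bar y_h\to\bar y$ strongly. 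Then $\inf\J_{h,k(h)}\le \|\bar z_{k(h)}-\bar y_h\|_\Z^2$, which stays bounded. Hence $\sup_h\J_{h,k(h)}(z_h)<\infty$, and Theorem~\ref{LHUWen} yields weak subsequential compactness, with every cluster point lying in $\E$.

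The second step is \emph{minimality of cluster points}. Let $z_{h_j}\rightharpoonup z_\ast$. The liminf inequality of the $\Gamma$-convergence in Theorem~\ref{LHUWen} gives
\[
\J(z_\ast)\le\liminf_j \J_{h_j,k(h_j)}(z_{h_j})\le \liminf_j \Bigl(\inf_\Z\J_{h_j,k(h_j)}+\varepsilon_{h_j}\Bigr).
\]
For any $w\in\Z$, a recovery sequence $w_h$ gives $\limsup_h\inf\J_{h,k(h)}\le\limsup_h \J_{h,k(h)}(w_h)=\J(w)$. Combining the two, $\J(z_\ast)\le \inf_\Z\J$. Therefore $z_\ast$ is a minimizer, the minimum is attained, and $\liminf\inf\J_{h,k(h)}\ge \J(z_\ast)=\min\J\ge\limsup\inf\J_{h,k(h)}$. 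This gives the convergence of the infima along the subsequence. Since every subsequence has a further subsequence with the same limit $\min\J$, the convergence holds for the full sequence.

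The third step is \emph{uniqueness}. If the minimizer $z_\ast$ is unique, every subsequence of $(z_h)$ has a further subsequence converging weakly to $z_\ast$, so the whole sequence converges weakly.

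The main point requiring care is the upper bound on $\inf\J_{h,k(h)}$, i.e.\ the existence of recovery sequences for the diagonal pairing $k=k(h)$. That both Mosco convergences hold independently is what makes $\I_{\E_k}+d^2(\cdot,\D_h)$ admit a joint recovery. The key estimate is the triangle inequality $d(w_k,\D_h)\le \|w_k-w\|_\Z+d(w,\D_h)$ together with $\limsup_h d(w,\D_h)\le d(w,\D)$. The latter comes from strong recovery in $\D_h$ of near-minimizers in $\D$. This is already encoded in Theorem~\ref{LHUWen}, so we would invoke it rather than reprove it. The only thing to check is that the liminf and recovery statements are used in the weak topology on bounded sets, which is legitimate by the metrizability remark and Lemma~\ref{Sur2ng}.
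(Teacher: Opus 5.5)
Your proposal is correct and follows the paper's route. As for Corollary~\ref{auBXLU}, the paper obtains this corollary by combining the $\Gamma$-convergence and compactness of Theorem~\ref{LHUWen} with the fundamental theorem on quasi-minimizers (Theorem~\ref{K87zb7}), whose proof you essentially unpack; your explicit competitor built from the two Mosco recovery sequences supplies the uniform bound $\sup_h \J_{h,k(h)}(z_h)<\infty$, under $\inf_\Z\J<+\infty$ (assumed explicitly in Corollary~\ref{auBXLU}), which the paper leaves implicit.
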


\subsection{Optimal diagonal strategies}

Corollary~\ref{5Hx8sa} establishes the convergence of fully discrete diagonal sequences defined by the mapping $k(h)$. There is no intrinsic optimal diagonal in the purely qualitative Mosco/\(\Gamma\)-convergence framework, unless the data have additional structure. 

An example of structured data is furnished by Lemma~\ref{7YAxr2}, which sets forth an explicit scale for the material-data approximation error in the case in which the exact data set corresponds to Darcy's law. The two one-sided local errors lift to global distance estimates of order
\[
        R_h:=|\Omega|^{1/2}\rho_h,
        \quad
        T_h:=|\Omega|^{1/2}t_h.
\]
More precisely, for every \(z\in \Z\),
\[
        d(z,\D_h)\le d(z,\D)+R_h,
        \quad
        d(z,\D)\le d(z,\D_h)+T_h.
\]
Thus, on bounded sublevel sets,
\[
        \bigl|d(z,\D_h)-d(z,\D)\bigr|
        \lesssim
        |\Omega|^{1/2}(\rho_h+t_h),
\]
where $\lesssim$ stands for $\leq$ up to positive constants independent of the discretization. 
Consequently, the natural material-data error scale for the functional
\(\J_h\) is
\[
        a_h:=|\Omega|^{1/2}(\rho_h+t_h).
\]

Let \(b_k\downarrow 0\) denote a finite-element consistency scale for the
discrete feasible sets \(\E_k\to\E\). For instance, \(b_k\) may be chosen so
that, on bounded sets,
\[
        \inf_{z_k\in\E_k}\|z_k-z\|_{\Z}\lesssim b_k ,
        \quad
        z\in\E,
\]
A weaker, value-level consistency condition is
\[
        0\le
        \inf_{\Z}\J_{h,k}-\inf_{\Z}\J_h
        \lesssim b_k .
\]
The diagonal sequence may then be chosen by balancing the discretization and data error, i.e., \(k(h)\) is the smallest integer for which
\[
        b_{k(h)}
        \lesssim
        |\Omega|^{1/2}(\rho_h+t_h) .
\]
This choice ensures that the finite-element error is no larger than the material-data error. Thus, if \(k(h)\) is chosen too small, the computed error is dominated by the spatial discretization. Conversely, if \(k(h)\) is chosen much larger than required by this inequality, the discrete field equations are solved more accurately than warranted by material data fidelity. By this balancing criterion, the optimal diagonal sequence represents the coarsest growth for which the finite-element consistency error is asymptotically no larger than the material-data error.

\begin{example}[Algebraic finite-element rates] {\rm
Suppose that the finite-element consistency scale satisfies \(b_k\simeq \Delta_k^p\), where \(\Delta_k\) is the mesh size and \(p>0\) is the approximation order. Then the balancing rule becomes
\[
        \Delta_{k(h)}^p
        \lesssim
        |\Omega|^{1/2}(\rho_h+t_h).
\]
If the number of degrees of freedom satisfies \(N_k\simeq \Delta_k^{-N}\), then the degree-of-freedom scale is
\begin{equation} \label{9cJTrB}
        N_{k(h)}
        \gtrsim
        \bigl(|\Omega|^{1/2}(\rho_h+t_h)\bigr)^{-N/p}.
\end{equation}
In practice, one chooses the smallest \(N_{k(h)}\), or equivalently the coarsest mesh, satisfying this estimate.} \hfill$\square$
\end{example}

\section{Thermodynamic inequality as a strong constraint}\label{sec:thermo-constraint}

We now impose the second-law inequality after the unconstrained theory has been
set up. For \(z=(e,s)\in \Z\), define
\[
        \Gmap(z):=e\cdot s .
\]
Since \(e,s\in L^2(\Omega;\R^N)\), Hölder's inequality gives
\(\Gmap(z)\in L^1(\Omega)\). The thermodynamic constraint set is
\begin{equation}
        \Cth
        :=
        \left\{
        z=(e,s)\in \Z:
        \Gmap(z)=e\cdot s\in L^1(\Omega),\
        e\cdot s\le0\text{ a.e. in }\Omega
        \right\}.
        \label{eq:Cth}
\end{equation}
Equivalently, \(\Cth=\Gmap^{-1}(L^1_-(\Omega))\), where
\[
        L^1_-(\Omega):=\{w\in L^1(\Omega): w\le0\text{ a.e.}\}.
\]
The constrained feasible set is $\E\cap\Cth$. The constrained Data-Driven functional is
\begin{equation} 
        \J_{\Cth}(z)
        :=
        \I_\E(z)+d^2(z,\D)+\I_{\Cth}(z)
        =
        \I_{\E\cap\Cth }(z)+d^2(z,\D),
        \label{eq:J-C}
\end{equation}
and the constrained Data-Driven problem is
\begin{equation} \label{FJaQzp}
        \operatorname*{argmin}_{z\in \Z} \J_\Cth(z) ,
\end{equation}
which replaces (\ref{8mP9Dp}).

\begin{remark}[Relation with the integral power identity]
If \(z=(e,s)\in \E\cap\Cth\) has the trace regularity required for the power identity, and \(u\) is an associated potential, then \eqref{eq:power-id} gives
\[
        \int_\Omega \zeta u^2\,\dd x
        \le
        \int_\Omega q u\,\dd x
        -
        \langle h,u\rangle_{\Gamma_N}
        -
        \langle s\cdot\nu,g\rangle_{\Gamma_D}.
\]
For homogeneous Dirichlet data, the last term is absent. This bound is a consequence of the pointwise inequality and is not used in the unconstrained development of Section~\ref{xuwzhr}. Here we will use it only in Proposition~\ref{prop:ellip} with $g= 0$, $h=0$, so that  $s$ does not need to have a well-defined normal trace.
\end{remark}

\begin{theorem}[Recovery of the constrained Darcy solution]
Under the assumptions of the recovery Theorem~\ref{2Zy8sW}, the constrained problem (\ref{FJaQzp}) has the same unique solution as the unconstrained Darcy problem. 
\end{theorem}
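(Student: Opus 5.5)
The argument will reuse Theorem~\ref{2Zy8sW} and add one observation: the classical Darcy solution already satisfies the thermodynamic constraint. Once it is known to lie in $\E\cap\Cth$, it is a zero-residual point of the constrained problem. Uniqueness then follows exactly as in the unconstrained case, because any minimizer of (\ref{FJaQzp}) is forced to have zero distance to $\D$.

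\emph{Step 1 (admissibility of the Darcy solution).} Let $(u,e,s)\in X$ be the unique solution of \eqref{eq:threefield}, so $e=\grad u$ and $s=-\K e$ in $L^2(\Omega;\R^N)$. By Hölder's inequality $e\cdot s\in L^1(\Omega)$. Pointwise we have
\[
        e(x)\cdot s(x)=-\K e(x)\cdot e(x)\le -\alpha|e(x)|^2\le 0
        \quad\text{a.e. in }\Omega,
\]
using the positive definiteness of $\K$, which is the inequality \eqref{5L2sTj}. Hence $z=(e,s)\in\Cth$. As in the proof of Theorem~\ref{2Zy8sW}, we also have $z\in\E\cap\D$. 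Consequently $z\in\E\cap\Cth\cap\D$ and $\J_\Cth(z)=d^2(z,\D)=0$.

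\emph{Step 2 (minimality and existence).} Since $\J_\Cth\ge0$ on $\Z$, the point $z$ is a minimizer of (\ref{FJaQzp}) and the minimum value is $0$. No compactness or closedness argument for $\E\cap\Cth$ is needed here, because the minimizer is exhibited explicitly.

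\emph{Step 3 (uniqueness).} Let $\bar z\in\Z$ be any minimizer of (\ref{FJaQzp}). Then $\J_\Cth(\bar z)\le\J_\Cth(z)=0$, so $\bar z\in\E\cap\Cth$ and $d(\bar z,\D)=0$. Since $\D$ is the closed linear subspace defined by the Darcy graph \eqref{6eDLHt}, zero distance gives $\bar z\in\D$. Explicitly, \eqref{eq:darcy-distance} yields $\bar s=-\K\bar e$ a.e. Alternatively, $\bar z$ is also a minimizer of the unconstrained functional \eqref{8mP9Dp}, since $\J(\bar z)=0\le\J$. In either case the uniqueness part of Theorem~\ref{2Zy8sW} gives $\bar z=z$.

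The proof has no real obstacle. The only point requiring care is Step~3, namely passing from $d(\bar z,\D)=0$ to $\bar z\in\D$. This is where closedness of $\D$ is used, and it follows at once from the explicit formula \eqref{eq:darcy-distance} together with the positive definiteness of $\K^{-1}$. The heavier weak-closedness machinery for $\E\cap\Cth$ that the paper develops later is not needed for this recovery statement.
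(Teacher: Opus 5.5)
Your proposal is correct and follows essentially the same route as the paper. The paper's proof also shows that the Darcy solution satisfies $e\cdot s=-\K e\cdot e\le 0$, hence lies in $\E\cap\Cth$ with zero distance to $\D$. It then observes that every constrained minimizer must also have zero distance to $\D$, and invokes the recovery argument of Theorem~\ref{2Zy8sW} to identify that minimizer with the unique weak Darcy solution.
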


\begin{proof}
The unconstrained Darcy solution satisfies \(s=-\K e\), hence
\[
        e\cdot s=-\K e\cdot e\le0
        \quad\text{a.e. in }\Omega.
\]
Thus, the unconstrained minimizer belongs to \(\E\cap\Cth\) and has zero distance
to \(\D\). Every constrained minimizer has distance at least zero and therefore
also has distance zero. The recovery argument in Section~\ref{xuwzhr} then implies that it
is the unique weak Darcy solution.
\end{proof}

\subsection{Is the constraint set weakly closed?}

The set \(\Cth\) is not convex. Indeed, for any nonzero \(\xi\in\R^N\), the
constant states \((3\xi,-\xi)\) and \((-\xi,3\xi)\) belong to \(\Cth\), but their
average \((\xi,\xi)\) does not. More importantly, \(\Cth\) is not weakly closed
in the ambient phase space \(\Z=L^2(\Omega;\R^N)\times L^2(\Omega;\R^N)\).

\begin{lemma}[The pointwise sign set is not weakly closed in \(\Z\)]
The set \(\Cth\) defined by \eqref{eq:Cth} is not weakly sequentially closed in
\(\Z\).
\end{lemma}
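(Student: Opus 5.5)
The proof is a single counterexample: a sequence in \(\Cth\) whose product \(e_j\cdot s_j\) stays nonpositive while its weak limit has strictly positive product. The non-convexity example above already suggests the construction. Mix the two admissible constant states \((3\xi,-\xi)\) and \((-\xi,3\xi)\) on finer and finer laminates, so that the weak limit is their average \((\xi,\xi)\), which lies outside \(\Cth\).

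Fix a nonzero \(\xi\in\R^N\) and a unit vector \(n\in\R^N\). Let \(\chi:\R\to\{0,1\}\) be the \(1\)-periodic function equal to \(1\) on \([0,1/2)\) and \(0\) on \([1/2,1)\). Set \(\chi_j(x):=\chi(j\,x\cdot n)\) and
\[
        e_j:=\chi_j\,3\xi+(1-\chi_j)(-\xi),
        \qquad
        s_j:=\chi_j(-\xi)+(1-\chi_j)\,3\xi .
\]
Pointwise, \((e_j(x),s_j(x))\) equals either \((3\xi,-\xi)\) or \((-\xi,3\xi)\). In both cases \(e_j\cdot s_j=-3|\xi|^2\le0\), so \(z_j=(e_j,s_j)\in\Cth\). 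Because \(\Omega\) is bounded, \(z_j\) is bounded in \(L^\infty\), and hence in \(\Z\).

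The only analytic step is the weak limit. By the Riemann--Lebesgue lemma for periodic oscillations, \(\chi_j\rightharpoonup 1/2\) weakly-\(*\) in \(L^\infty(\Omega)\), and therefore weakly in \(L^2(\Omega)\). To verify this, test first against indicator functions of boxes aligned with \(n\); these have dense span in \(L^1\), and the bound \(\|\chi_j\|_{L^\infty}\le 1\) then extends the convergence to all test functions. It follows that
\[
        e_j\rightharpoonup \tfrac12(3\xi-\xi)=\xi,
        \qquad
        s_j\rightharpoonup \tfrac12(-\xi+3\xi)=\xi
\]
weakly in \(L^2(\Omega;\R^N)\). Since the norm on \(\Z\) is equivalent to the standard \(L^2\) norm (\(\K\) is positive definite), weak convergence in \(\Z\) is the same as componentwise weak \(L^2\) convergence.

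The weak limit is the constant state \(z=(\xi,\xi)\), for which \(e\cdot s=|\xi|^2>0\) everywhere in \(\Omega\), a set of positive measure. Thus \(z\notin\Cth\), and \(\Cth\) is not weakly sequentially closed. The underlying mechanism is that the quadratic map \(\Gmap(z)=e\cdot s\) is not weakly continuous. Here \(\Gmap(z_j)\equiv-3|\xi|^2\), while \(\Gmap\) of the limit equals \(|\xi|^2\).

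I do not expect a real obstacle. The one point needing care is the weak convergence of the laminate characteristic functions, which is standard. One could note, in anticipation of the compensated compactness argument, that this construction is far from \(\E\): \(e_j\) jumps across planes with normal \(n\) in the direction \(\xi\), so \(e_j\) is not curl-free unless \(\xi\parallel n\). The lemma concerns \(\Cth\) alone, though, so no compatibility with \(\E\) is required.
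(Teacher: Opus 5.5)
Your proof is correct and takes the same route as the paper: a periodic laminate between two admissible constant states whose weak limit is $(\xi,\xi)$, where $\xi\cdot\xi>0$. The paper oscillates between $(2\xi,0)$ and $(0,2\xi)$ along $x_1$, so that $e_j\cdot s_j=0$; you instead oscillate between $(3\xi,-\xi)$ and $(-\xi,3\xi)$ along a general direction $n$, which keeps $e_j\cdot s_j\equiv-3|\xi|^2$ strictly negative. These differences are cosmetic, and your version also justifies the weak convergence that the paper only asserts.
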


\begin{proof}
Choose a fixed non-zero vector $\xi \in \R^N$. Let $y \mapsto \chi(y)$ be the $1$-periodic extension of the characteristic function $\chi_{(0,1/2)}$ on $[0,1)$. For each $j \in \mathbb{N}$, define the rapidly oscillating sequence of scalar functions on $\Omega$:
\[
a_j(x) := 2 \chi(j x_1), \quad b_j(x) := 2 \left(1 - \chi(j x_1)\right), \quad \hbox{for} ~ x = (x_1,\dots,x_N) \in \Omega.
\]
Equivalently, $a_j(x) = 2 \chi_{(0, 1/2)}(j x_1 \bmod 1)$ and $b_j(x) = 2 \chi_{(1/2, 1)}(j x_1 \bmod 1)$.
Then
\[
        a_j\rightharpoonup 1,
        \quad
        b_j\rightharpoonup 1
        \quad\text{weakly in }L^2(\Omega).
\]
Set
\[
        e_j=a_j\xi,
        \quad
        s_j=b_j\xi .
\]
Since \(a_jb_j=0\) a.e.,
\[
        e_j\cdot s_j=0\quad\text{a.e.},
\]
and therefore \(z_j=(e_j,s_j)\in\Cth\) for every \(j\). However
\(z_j\rightharpoonup z=(\xi,\xi)\) weakly in \(\Z\), and
\[
        \xi\cdot\xi=|\xi|^2>0.
\]
Thus, \(z\notin\Cth\). Hence \(\Cth\) is not weakly sequentially closed in
\(\Z\).
\end{proof}

The preceding lemma shows that the abstract weak-closedness hypotheses in the
Data-Driven existence theorem cannot be applied to \(\Cth\) alone. However, the
physically relevant set is not \(\Cth\) by itself but \(\E\cap\Cth\). On the
material-independent set \(\E\), the fields have differential structure: \(e\) is
a gradient and \(s\) satisfies a divergence equation. This restores weak
closedness by compensated compactness \cite{Murat1978,Tartar1979}.

\begin{theorem}[Weak closedness of \(\E\cap\Cth\) by compensated compactness]
Assume \(\zeta\in L^\infty(\Omega)\). Then \(\E\cap\Cth\) is weakly sequentially closed in \(\Z\).
\end{theorem}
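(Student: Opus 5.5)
Take a sequence $z_j=(e_j,s_j)\in\E\cap\Cth$ with $z_j\rightharpoonup z=(e,s)$ weakly in $\Z$. The lemma on weak closedness of $\E$ already gives $z\in\E$, with an associated potential $u\in H^1_g(\Omega)$, $e=\grad u$, obtained as the weak $H^1$-limit of potentials $u_j$ (up to a subsequence). It remains to show $e\cdot s\le 0$ a.e. The plan is a div--curl argument. The constraint $e_j\cdot s_j\le0$ gives, for every nonnegative $\varphi\in C_c^\infty(\Omega)$, $\int_\Omega \varphi\, e_j\cdot s_j\,\dd x\le 0$. If we can show
\[
\int_\Omega \varphi\, e_j\cdot s_j\,\dd x\to\int_\Omega \varphi\, e\cdot s\,\dd x ,
\]
then $\int\varphi\,e\cdot s\le0$ for all such $\varphi$, hence $e\cdot s\le0$ a.e., i.e.\ $z\in\Cth$.

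To prove the convergence we avoid the general Murat--Tartar lemma and exploit the explicit structure. Since $\varphi$ has compact support, $\varphi u_j\in H^1_0(\Omega)\subset H^1_D(\Omega)$ is an admissible test function in \eqref{eq:threefield-bal}, and $\grad(\varphi u_j)=\varphi e_j+u_j\grad\varphi$. Testing with $\testu=\varphi u_j$ and noting that the Neumann term vanishes for compactly supported test functions gives
\[
\int_\Omega \varphi\,e_j\cdot s_j\,\dd x
=-\int_\Omega u_j\,s_j\cdot\grad\varphi\,\dd x+\int_\Omega\zeta u_j^2\varphi\,\dd x-\langle q,\varphi u_j\rangle_\Omega .
\]
Now pass to the limit term by term. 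By Rellich, $u_j\to u$ strongly in $L^2(\Omega)$ because $\Omega$ is bounded Lipschitz. Hence the first term converges, since it pairs a strongly convergent sequence with $s_j\rightharpoonup s$. The second converges because $\zeta\varphi\in L^\infty$ and $u_j^2\to u^2$ in $L^1$. The third converges because $\varphi u_j\rightharpoonup\varphi u$ in $H^1_0$ and $q\in H^{-1}$.

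The same identity holds for the limit, since $(u,e,s)$ satisfies \eqref{eq:threefield-bal} and we may test with $\varphi u$. Its right-hand side is exactly the limit just computed, so it equals $\int\varphi\,e\cdot s$, which establishes the convergence.

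The main obstacle is this identification step. A product of two weakly convergent sequences is not weakly continuous in general, as the preceding counterexample shows, and the argument must use that $e_j$ is a gradient whose potentials converge strongly while $\diver s_j$ is controlled in $H^{-1}$. Some care is also needed to ensure the localized test functions $\varphi u_j$ are admissible, which they are since they vanish on all of $\partial\Omega$. Finally, the whole sequence converges weakly to $z$, so the subsequence extraction for the $u_j$ is harmless, and sequential closedness follows.
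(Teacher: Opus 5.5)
Your proof is correct. It differs from the paper's in how you obtain the key convergence $\int_\Omega\varphi\,e_j\cdot s_j\,\dd x\to\int_\Omega\varphi\,e\cdot s\,\dd x$.

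The paper starts from the same ingredient: the potentials are bounded, so $u_j\to u$ strongly in $L^2(\Omega)$ by Rellich. It turns this into strong convergence $\diver s_j=q-\zeta u_j\to\diver s$ in $H^{-1}(\Omega)$, notes $\operatorname{curl}e_j=0$, and then cites the Murat--Tartar div--curl lemma as a black box to get $e_j\cdot s_j\to e\cdot s$ in $\mathcal D'(\Omega)$.

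You instead test \eqref{eq:threefield-bal} with the localized function $\varphi u_j\in H^1_0(\Omega)$. This is exactly the standard proof of the div--curl lemma, specialized to the case where one factor is the gradient of a potential converging strongly in $L^2$. The cross term $u_j\,s_j\cdot\grad\varphi$ pairs a strongly convergent sequence with a weakly convergent one. The reaction term converges because $u_j\to u$ strongly in $L^2$, and the source term because $\varphi u_j\rightharpoonup\varphi u$ weakly in $H^1$.

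What your route buys:
\begin{itemize}
\item It is self-contained and gives an explicit formula for the limiting power density.
\item It handles the mixed boundary conditions transparently. The Neumann term vanishes for compactly supported test functions, so you never need to interpret $\diver s_j$ beyond the weak form.
\end{itemize}

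What the paper's route buys: it is shorter and more robust. The general lemma would still apply if $e_j$ were only asymptotically curl-free, or if $\diver s_j$ were merely precompact in $H^{-1}_{\loc}$ without globally available exact potentials. That is the kind of situation one faces when verifying the discrete compactness assumption \eqref{eq:discrete-penalty-liminf} for nonconforming schemes.
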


\begin{proof}
Let \(z_j=(e_j,s_j)\in \E\cap\Cth\) and suppose
\(z_j\rightharpoonup z=(e,s)\) weakly in \(\Z\). By weak closedness of \(\E\),
there exists \(u\in H^1_g(\Omega)\) such that \(e=\grad u\) and
\(\diver s+\zeta u-q=0\). Choose associated potentials
\(u_j\in H^1_g(\Omega)\) with \(e_j=\grad u_j\) and
\(\diver s_j+\zeta u_j-q=0\). As in the proof of weak closedness of \(\E\), the
sequence \((u_j)\) is bounded in \(H^1(\Omega)\); hence, up to a subsequence,
\[
        u_j\rightharpoonup u\text{ in }H^1(\Omega),
        \quad
        u_j\to u\text{ in }L^2(\Omega).
\]
Therefore
\[
        \diver s_j=q-\zeta u_j
        \to
        q-\zeta u=\diver s
        \quad\text{strongly in }H^{-1}(\Omega).
\]
Moreover \(\operatorname{curl} e_j=0\) in distributions, since \(e_j=\grad u_j\). We use the distributional div-curl lemma \cite{Murat1978, Tartar1979} in the form: if \(e_j\rightharpoonup e\) and \(s_j\rightharpoonup s\) in \(L^2(\Omega)\), while \(\operatorname{curl}e_j\) and \(\diver s_j\) are precompact in \(H^{-1}(\Omega)\), then \(e_j\cdot s_j\to e\cdot s\) in \(\mathcal D'(\Omega)\). Its hypotheses are satisfied here, and hence
\[
        e_j\cdot s_j \to e\cdot s
        \quad\text{in }\mathcal D'(\Omega).
\]
For every nonnegative \(\varphi\in C_c^\infty(\Omega)\), the pointwise
constraint gives
\[
        \int_\Omega \varphi\, e_j\cdot s_j\,\dd x\le0.
\]
Passing to the distributional limit gives
\[
        \int_\Omega \varphi\, e\cdot s\,\dd x\le0
        \quad\text{for every }\varphi\ge0.
\]
Thus, \(e\cdot s\le0\) in the sense of distributions and therefore a.e. in
\(\Omega\). Since \(e,s\in L^2(\Omega)\), \(e\cdot s\in L^1(\Omega)\). Hence \(z\in\Cth\), and
therefore \(z\in \E\cap\Cth\).
\end{proof}

\begin{proposition}[Ellipsoidal outer constraint] \label{prop:ellip}
Assume \(g=0\) on \(\Gamma_D\), \(h=0\) on \(\Gamma_N\), \(q\in L^2(\Omega)\), and \(\zeta\in L^\infty(\Omega)\) with \(\zeta\ge \zeta_0>0\) a.e. If \(z=(e,s)\in\E\cap\Cth\) and \(u\in H^1_D(\Omega)\) is its potential, then
\[
        \int_\Omega \zeta\left(u-\frac{q}{2\zeta}\right)^2\,\dd x
        \le
        \int_\Omega \frac{q^2}{4\zeta}\,\dd x.
\]
Thus the pointwise thermodynamic condition implies a closed convex, \(L^2(\Omega)\)-ellipsoidal outer constraint on \(u\). The converse need not hold.
\end{proposition}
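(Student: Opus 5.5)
The plan is to test the weak balance equation \eqref{eq:threefield-bal} with the potential itself, instead of going through the power identity \eqref{eq:power-id}. This sidesteps any need for a normal trace of \(s\).

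Since \(g=0\), the potential \(u\) lies in \(H^1_D(\Omega)\), so \(\testu=u\) is an admissible test function. With \(h=0\) the balance equation gives
\[
-(s,\grad u)+(\zeta u,u)=\langle q,u\rangle_\Omega=\int_\Omega q u\,\dd x,
\]
where the duality reduces to an integral because \(q\in L^2(\Omega)\). By the geometric condition \(e=\grad u\), the first term equals \(-\int_\Omega e\cdot s\,\dd x\). Since \(z\in\Cth\), we have \(e\cdot s\le 0\) a.e., so this term is nonnegative. Dropping it yields
\[
\int_\Omega \zeta u^2\,\dd x\le \int_\Omega q u\,\dd x,
\]
which is the bound from the remark following the power identity.

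Next, complete the square pointwise. Because \(\zeta\ge\zeta_0>0\), both \(q/(2\zeta)\) and \(q^2/(4\zeta)\le q^2/(4\zeta_0)\) are integrable. Pointwise,
\[
\zeta\Bigl(u-\frac{q}{2\zeta}\Bigr)^2=\zeta u^2-qu+\frac{q^2}{4\zeta}.
\]
Integrating and inserting the previous inequality gives the claimed estimate. This lower bound on \(\zeta\) is the only place where \(\zeta_0>0\) is used.

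For the geometric statement, I would observe that the admissible set \(\{u\in L^2(\Omega):\int_\Omega\zeta(u-q/(2\zeta))^2\,\dd x\le R^2\}\), with \(R^2=\int_\Omega q^2/(4\zeta)\,\dd x\), is a closed ball in the weighted \(L^2\) norm. Since \(\zeta_0\le\zeta\le\|\zeta\|_\infty\), this norm is equivalent to the \(L^2\) norm, so the set is a closed convex ellipsoid in \(L^2(\Omega)\). For the failure of the converse, it suffices to give an example: take any \(u\) in the ellipsoid and a flux \(s\) that satisfies the balance equation but violates the sign condition on some set, for instance by adding a divergence-free field concentrated where \(\grad u\neq0\). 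A brief remark of this kind should suffice.

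The main obstacle is mild: justifying the choice \(\testu=u\). This requires \(u\in H^1_D(\Omega)\), which holds because \(g=0\), and it avoids any trace of \(s\) on \(\Gamma_D\), consistent with the paper's remark that \(s\in H(\diver)\) is not needed. Everything else is algebra.
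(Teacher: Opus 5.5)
Your proof is correct and is essentially the paper's argument. The paper applies the power identity \eqref{eq:power-id} with \(g=0\), \(h=0\) and then completes the square; in that case the identity is exactly the weak balance \eqref{eq:threefield-bal} tested with \(\testu=u\) together with \(e=\grad u\), which is why, as the paper itself remarks, no normal trace of \(s\) is needed. Your direct use of the weak form is just the cleaner way to write this step. Your sketch for the failure of the converse, which the paper does not prove, is fine provided you choose \(u\neq0\) in the ellipsoid, and this requires \(q\neq0\).
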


\begin{proof}
With homogeneous boundary data, \eqref{eq:power-id} and \(e\cdot s\le0\) give
\[
        \int_\Omega \zeta u^2\,\dd x-
        \int_\Omega q u\,\dd x\le0.
\]
Completing the square gives the claim.
\end{proof}

\begin{corollary}[Existence and convergence with the thermodynamic indicator]
Let \(\D,\D_h\subset \Z\) and assume \(\D=M\hbox{-}\lim_{h\to\infty}\D_h\). Suppose the
transversality estimate \eqref{eq:equi-trans} holds for every \(y\in \D_h\) and
\(z\in \E\cap\Cth \). Then
\[
        \I_{\E\cap\Cth }(\cdot)+d^2(\cdot,\D)
        =
        \Gamma\hbox{-}\lim_{h\to\infty}
        \left(\I_{\E\cap\Cth }(\cdot)+d^2(\cdot,\D_h)\right)
\]
with respect to the weak topology of \(\Z\). In particular, if \(\D\) is weakly
sequentially closed and the estimate holds with \(\D_h=\D\), then the constrained
Data-Driven problem \eqref{FJaQzp} has minimizers.
\end{corollary}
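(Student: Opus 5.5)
The plan is to reduce the corollary to Theorem~\ref{udw8S6} by replacing the constraint set \(\E\) with \(\E\cap\Cth\). Theorem~\ref{udw8S6} uses only three properties of the constraint set: it is weakly sequentially closed, \(\D_h\) Mosco-converges to \(\D\), and the uniform transversality estimate \eqref{eq:equi-trans} holds. The first is supplied by the compensated-compactness theorem above: \(\E\cap\Cth\) is weakly sequentially closed in \(\Z\). It is nonempty whenever the infimum is finite, and otherwise the statement is trivial. The second is an assumption. The third is assumed directly for \(y\in\D_h\) and \(z\in\E\cap\Cth\). So, with \(\E\) renamed \(\E\cap\Cth\), Theorem~\ref{udw8S6} gives the \(\Gamma\)-convergence of \(\I_{\E\cap\Cth}+d^2(\cdot,\D_h)\) to \(\I_{\E\cap\Cth}+d^2(\cdot,\D)\) in the weak topology of \(\Z\). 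It also gives compactness of bounded-energy sequences, with every weak limit in \(\E\cap\Cth\).

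For completeness I would recall the two inequalities, since the weak topology is metrizable on bounded sets (Lemma~\ref{Sur2ng}).

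\emph{Liminf inequality.} Let \(z_h\rightharpoonup z\) with bounded energy. Then \(z_h\in\E\cap\Cth\), so \(z\in\E\cap\Cth\) by weak closedness. Choose \(y_h\in\D_h\) that are nearly optimal for \(d(z_h,\D_h)\). Transversality bounds \(y_h\), so a subsequence satisfies \(y_h\rightharpoonup y\). The Mosco liminf condition gives \(y\in\D\), and weak lower semicontinuity of the norm gives \(d(z,\D)\le\|z-y\|_\Z\le\liminf d(z_h,\D_h)\).

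\emph{Recovery sequence.} Take the constant sequence \(z_h=z\in\E\cap\Cth\). Choose \(y\in\D\) nearly optimal for \(d(z,\D)\), and by the Mosco recovery condition take \(y_h\in\D_h\) with \(y_h\to y\) strongly. Then \(\limsup d(z,\D_h)\le\|z-y\|_\Z\), and the arbitrariness of the near-optimality gives \(\limsup d(z,\D_h)\le d(z,\D)\). The constant sequence is admissible here precisely because the constraint set does not depend on \(h\), so no approximation of \(\E\cap\Cth\) is needed.

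For the existence statement I would take \(\D_h=\D\) and apply Theorem~\ref{thm:general-dd-existence} with \(\E\) replaced by \(\E\cap\Cth\). Equivalently, one runs the direct method on a minimizing sequence in \(\E\cap\Cth\), using the companion data points \(y_j\). Transversality gives boundedness, reflexivity of \(\Z\) gives weak limits, weak closedness of \(\D\) and of \(\E\cap\Cth\) keeps those limits admissible, and lower semicontinuity of the norm concludes.

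The only substantive point is that Theorems~\ref{udw8S6} and \ref{thm:general-dd-existence} never use linearity or any other structure of the constraint set, only its weak sequential closedness. Since \(\Cth\) alone fails this property, the whole argument rests on the preceding div-curl theorem. So the main step to check is that weak closedness of \(\E\cap\Cth\) holds along arbitrary weakly convergent sequences in \(\Z\), not merely along subsequences. This follows because every subsequence has a further subsequence whose limit lies in the set, and the limit is unique.
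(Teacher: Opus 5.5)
Your proposal is correct and follows the paper's proof. The paper likewise takes the weak sequential closedness of \(\E\cap\Cth\) from the compensated-compactness theorem and applies the abstract Mosco/\(\Gamma\)-convergence result (Theorem~\ref{udw8S6}) with \(\E\) replaced by \(\E\cap\Cth\). Your explicit liminf/recovery sketch and your appeal to Theorem~\ref{thm:general-dd-existence} for the existence claim spell out steps the paper leaves implicit.
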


\begin{proof}
The compensated-compactness theorem gives weak sequential closedness of
\(\E\cap\Cth \). The result is the abstract Mosco/Gamma convergence theorem of
Section~\ref{xuwzhr} applied with \(\E\) replaced by \(\E\cap\Cth \).
\end{proof}

\begin{remark}[Why the compensated-compactness argument is essential]
The indicator \(\I_{\Cth}\) is not weakly lower semicontinuous on \(\Z\), because
\(\Cth\) is not weakly closed. Thus, the constrained functional
\(\I_\E+d^2(\cdot,\D)+\I_{\Cth}\) is not covered by the usual direct method by
looking at \(\I_{\Cth}\) alone. The correct weak closedness statement is the
closedness of \(\E\cap\Cth\), not of \(\Cth\). The differential constraints in
\(\E\) remove the oscillatory counterexample by forcing convergence of the power
densities \(e_j\cdot s_j\) in distributions.
\end{remark}

\section{Enforcing thermodynamic consistency by Lagrange multipliers}

We continue to consider the natural phase space
\[
        \Z:=L^2(\Omega;\R^N)\times L^2(\Omega;\R^N),
        \quad z=(e,s) ,
\]
and define
\[
        g(z):=e\cdot s\in L^1(\Omega).
\]
Then the thermodynamic constraint set is
\[
        \Cth
        =
        g^{-1}\bigl(L^1_-(\Omega)\bigr),
        \quad
        L^1_-(\Omega):=
        \{w\in L^1(\Omega):w\le0\text{ a.e.}\}.
\]
The positive multiplier cone is
\[
        L^\infty_+(\Omega):=
        \{\lambda\in L^\infty(\Omega):\lambda\ge0\text{ a.e.}\}.
\]
Since \(L^1_-(\Omega)\) is a closed convex cone,
\begin{equation}
        \I_{\Cth}(z)
        =
        \I_{L^1_-(\Omega)}(g(z))
        =
        \sup_{\lambda\in L^\infty_+(\Omega)}
        \int_\Omega \lambda\,e\cdot s\,\dd x,
        \label{eq:indicator-support}
\end{equation}
where the supremum is \(+\infty\) if \(e\cdot s\not\le0\) a.e. 

\subsection{Formulation as saddle problem}

Define
\begin{equation} \label{eq:lagrangian}
        \Lag(z,\lambda)
        :=
        \I_\E(z)+d^2(z,\D)
        +
        \int_\Omega \lambda\,e\cdot s\,\dd x,
        \quad
        \lambda\in L^\infty_+(\Omega).
\end{equation}

The first question is what a saddle point of the multiplier Lagrangian actually implies for the original constrained Data-Driven problem. The next result records the expected consequence: saddle points enforce the sign constraint and yield the usual complementarity relation.

\begin{theorem}[Saddle point formulation] \label{thm:saddle-complementarity}
Let
\[
        z_\ast=(e_\ast,s_\ast)\in\E,
        \quad
        \lambda_\ast\in L^\infty_+(\Omega).
\]
If \((z_\ast,\lambda_\ast)\) is a saddle point of \(\Lag\), i.e.,
\begin{equation} \label{eq:saddle}
        \Lag(z_\ast,\lambda)
        \le
        \Lag(z_\ast,\lambda_\ast)
        \le
        \Lag(z,\lambda_\ast)
        \quad
        \forall z\in\Z,\quad
        \forall \lambda\in L^\infty_+(\Omega),
\end{equation}
then \(z_\ast\) solves the constrained Data-Driven problem on \(\E\cap\Cth\)
and
\begin{equation}
        \lambda_\ast\ge0,
        \quad
        e_\ast\cdot s_\ast\le0,
        \quad
        \lambda_\ast\,e_\ast\cdot s_\ast=0
        \quad\text{a.e. in }\Omega.
        \label{eq:complementarity}
\end{equation}
\end{theorem}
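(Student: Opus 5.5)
The plan is the standard saddle-point argument. I will use the left inequality in \eqref{eq:saddle} to obtain feasibility and complementarity, and the right inequality to obtain minimality. Since \(z_\ast\in\E\), we have \(\I_\E(z_\ast)=0\). Also \(d^2(z_\ast,\D)\) is finite, because \(\D\) is nonempty and every distance in \(\Z\) is finite. Hence \(\Lag(z_\ast,\lambda_\ast)\) is a finite real number. The left inequality then states that
\[
\int_\Omega \lambda\, e_\ast\cdot s_\ast\,\dd x\le \int_\Omega \lambda_\ast\, e_\ast\cdot s_\ast\,\dd x<\infty
\]
for every \(\lambda\in L^\infty_+(\Omega)\).

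\textbf{Step 1: sign condition.} Suppose \(A:=\{e_\ast\cdot s_\ast>0\}\) had positive measure. Take \(\lambda=t\chi_A\) with \(t\to\infty\). The left-hand side is \(t\int_A e_\ast\cdot s_\ast\,\dd x\to+\infty\), which is a contradiction. Hence \(e_\ast\cdot s_\ast\le0\) a.e. Equivalently, one can read this off the support-function representation \eqref{eq:indicator-support}: the supremum over \(\lambda\) of \(\Lag(z_\ast,\lambda)\) is finite exactly when \(z_\ast\in\Cth\). Together with \(z_\ast\in\E\), this gives \(z_\ast\in\E\cap\Cth\).

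\textbf{Step 2: complementarity.} Take \(\lambda=0\) in the left inequality. This gives \(\int_\Omega\lambda_\ast e_\ast\cdot s_\ast\,\dd x\ge0\). But \(\lambda_\ast\ge0\) and \(e_\ast\cdot s_\ast\le0\), so the integrand \(\lambda_\ast e_\ast\cdot s_\ast\) is nonpositive a.e. A nonpositive function with nonnegative integral vanishes a.e., so \(\lambda_\ast e_\ast\cdot s_\ast=0\) a.e. Combined with \(\lambda_\ast\ge0\) (by hypothesis), this yields \eqref{eq:complementarity}. It also gives \(\Lag(z_\ast,\lambda_\ast)=d^2(z_\ast,\D)\).

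\textbf{Step 3: minimality.} Let \(z=(e,s)\in\E\cap\Cth\) be arbitrary. Then \(\I_\E(z)=0\). Moreover \(\lambda_\ast e\cdot s\le0\) a.e., and this product is integrable because \(\lambda_\ast\in L^\infty\) and \(e\cdot s\in L^1\). The right inequality in \eqref{eq:saddle} therefore gives
\[
d^2(z_\ast,\D)=\Lag(z_\ast,\lambda_\ast)\le \Lag(z,\lambda_\ast)=d^2(z,\D)+\int_\Omega\lambda_\ast e\cdot s\,\dd x\le d^2(z,\D).
\]
Hence \(z_\ast\) minimizes \(\J_\Cth\), and is a solution of \eqref{FJaQzp}.

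None of the steps is genuinely hard. The only point that needs care is Step 1: the multiplier \(\lambda\) must be chosen in \(L^\infty_+\), and the finiteness of \(\Lag(z_\ast,\lambda_\ast)\) must be justified, which holds because \(\lambda_\ast\in L^\infty\) and \(e_\ast\cdot s_\ast\in L^1\).
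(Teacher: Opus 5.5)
Your proof is correct and follows the same route as the paper. You send the multiplier to infinity in the left inequality to get the sign condition, test with $\lambda=0$ to get complementarity, and restrict the right inequality to $\E\cap\Cth$ to get minimality; the only differences are cosmetic (testing with $t\chi_A$ on the violation set instead of $t\mu$ for arbitrary $\mu\in L^\infty_+(\Omega)$, and using the already-established sign of $e_\ast\cdot s_\ast$ in place of the paper's second test $\lambda=2\lambda_\ast$).
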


\begin{proof}
Taking \(\lambda=t\mu\), with \(t>0\) and arbitrary
\(\mu\in L^\infty_+(\Omega)\), in the left inequality of \eqref{eq:saddle}
gives
\[
        t\int_\Omega \mu\,e_\ast\cdot s_\ast\,\dd x
        \le
        \int_\Omega \lambda_\ast\,e_\ast\cdot s_\ast\,\dd x
        \quad\text{for all }t>0.
\]
Letting \(t\to\infty\) yields
\[
        \int_\Omega \mu\,e_\ast\cdot s_\ast\,\dd x\le0
        \quad
        \forall \mu\in L^\infty_+(\Omega),
\]
and hence \(e_\ast\cdot s_\ast\le0\) a.e. Taking \(\lambda=0\) and then
\(\lambda=2\lambda_\ast\) in the same inequality gives
\[
        \int_\Omega \lambda_\ast\,e_\ast\cdot s_\ast\,\dd x=0.
\]
Together with \(\lambda_\ast\ge0\) and \(e_\ast\cdot s_\ast\le0\), this implies
\eqref{eq:complementarity}. The right inequality in \eqref{eq:saddle},
restricted to \(z\in\E\cap\Cth\), gives
\[
        d^2(z_\ast,\D)
        =
        \Lag(z_\ast,\lambda_\ast)
        \le
        \Lag(z,\lambda_\ast)
        \le
        d^2(z,\D),
\]
because \(\int_\Omega \lambda_\ast e\cdot s\,\dd x\le0\) for feasible \(z\),
and the value at \(z_\ast\) has zero multiplier contribution by
\eqref{eq:complementarity}. Hence \(z_\ast\) is a constrained minimizer.
\end{proof}

Equivalently, a positive multiplier can only be supported where the constraint is active. On the inactive set \(e_\ast\cdot s_\ast<0\), maximizing over nonnegative multipliers gives \(\lambda_\ast=0\); if \(e_\ast\cdot s_\ast>0\) on a set of positive measure, the multiplier term is unbounded above and no saddle point can exist.

Thus, the Lagrangian formulation selects those primal minimizers that admit a supporting multiplier.  In convex programming, suitable constraint qualifications often imply that every primal minimizer has such a multiplier; see Ekeland and T{\'e}mam~\cite[Ch.~VI]{EkelandTemam1999}. The complementarity system~\eqref{eq:complementarity} is, in essence, a mathematical program with complementarity constraints (MPCC); an analogous MPCC structure arises, in a contact-mechanics rather than thermodynamic setting, in the hybrid data-driven formulation of~\cite{GebhardtLangeSteinbach2024}.

Here, the existence of Lagrange multipliers is not automatic and is in fact highly problematic. First, the pointwise thermodynamic constraint is nonlinear and nonconvex in the phase variables and the product \(e\cdot s\) is not weakly continuous in the ambient space \( \Z = L^2(\Omega;\R^N)\times L^2(\Omega;\R^N)\).  The compensated-compactness argument establishes the weak closedness needed for the primal problem on \(\E\cap\Cth\), but it does not, by itself, supply a supporting multiplier for every primal minimizer. Second, a basic difficulty with the Lagrangian formulation in Theorem~\ref{thm:saddle-complementarity} is that the multiplier variable ranges over the cone \(L^\infty_+(\Omega)\), which is neither bounded nor compact in the strong topology.  Thus minimizing sequences in the primal variable may be compact, while the corresponding multiplier sequences may escape to infinity, reflecting concentration of the enforcement of the constraint \(e\cdot s\le0\).  Even after resorting to weak-\(*\) boundedness, compactness is available only on bounded subsets of \(L^\infty(\Omega)\), and the bilinear coupling in \eqref{eq:lagrangian} is not jointly continuous under the natural weak topology in \(\Z\) and the weak-\(*\) topology in \(L^\infty(\Omega)\).

Bounding the multipliers restores weak-\(*\) compactness of the truncated multiplier set \(\Lambda_R\) by Banach--Alaoglu \cite{Brezis2011}, but it does not repair this continuity defect.

\begin{proposition}[Failure of joint continuity]
The trilinear form
\[
        B(e,s,\lambda):=\int_\Omega \lambda\,e\cdot s\,\dd x
\]
is not jointly sequentially continuous for
\[
        L^2_{\rm weak}(\Omega;\R^N)\times
        L^2_{\rm weak}(\Omega;\R^N)\times
        L^\infty_{\rm weak-*}(\Omega).
\]
\end{proposition}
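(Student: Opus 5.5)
The plan is to exhibit an explicit counterexample, reusing the oscillating construction from the lemma showing that \(\Cth\) is not weakly sequentially closed. Fix a nonzero \(\xi\in\R^N\) and let \(\chi\) be the \(1\)-periodic extension of \(\chi_{(0,1/2)}\). Define
\[
        e_j:=a_j\xi,\quad s_j:=a_j\xi,\quad \lambda_j:=1,
\]
where \(a_j(x):=2\chi(jx_1)\). By the Riemann--Lebesgue lemma for periodic oscillations, \(a_j\rightharpoonup 1\) weakly in \(L^2(\Omega)\). Hence \(e_j\rightharpoonup\xi\) and \(s_j\rightharpoonup\xi\) weakly in \(L^2(\Omega;\R^N)\), while \(\lambda_j\equiv 1\) trivially converges weak-\(*\) in \(L^\infty(\Omega)\).

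The key computation uses \(a_j^2=2a_j\) (since \(a_j\) takes only the values \(0\) and \(2\)). This gives
\[
        B(e_j,s_j,\lambda_j)=|\xi|^2\int_\Omega a_j^2\,\dd x=2|\xi|^2\int_\Omega a_j\,\dd x\to 2|\xi|^2|\Omega|,
\]
whereas
\[
        B(\xi,\xi,1)=|\xi|^2|\Omega|.
\]
The two limits differ, so \(B\) is not jointly sequentially continuous. The only justification needed is the weak convergence of \(a_j\) tested against the constant \(1\in L^2(\Omega)\), which holds because \(\Omega\) is bounded.

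To make the point that truncating the multipliers does not help, I would also remark that \(\lambda_j\) can be taken inside any bounded multiplier set \(\Lambda_R\) with \(R\ge1\). The defect therefore persists on bounded multiplier sets. One could also give a variant in which \(e_j,s_j\) are fixed and only \(\lambda_j\) oscillates, but that case is weak-\(*\) continuous because \(e\cdot s\in L^1\). So the essential failure lies in the product of two weakly converging \(L^2\) factors.

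The main obstacle is minor: making sure the chosen oscillation has a nonzero defect measure. Choosing \(e_j=s_j\) guarantees this, since \(|a_j|^2\) has weak limit \(2\neq 1\). By contrast, the complementary supports used in the earlier lemma would give the limit \(0\) against the product \(|\xi|^2|\Omega|\), which also works. Either choice suffices, and I would present the first for its transparency.
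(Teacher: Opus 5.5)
Your proof is correct and is essentially the paper's argument: take the multiplier constant ($\lambda_j\equiv 1$) and let $e_j=s_j$ oscillate, so that $|e_j|^2$ does not converge weakly to the square of the weak limit. The paper does this with $\sin(nx)$ on $(0,2\pi)$ (weak limit $0$, values $\pi$ versus $0$). Your square wave $2\chi(jx_1)\xi$ (weak limit $\xi$, values $2|\xi|^2|\Omega|$ versus $|\xi|^2|\Omega|$) works directly on the paper's domain $\Omega\subset\R^N$, which is a minor advantage.
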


\begin{proof}
On \( \Omega = (0,2\pi)\), take \(e_n(x)=s_n(x)=\sin(nx)\) and \(\lambda_n\equiv1\). Then \(e_n\rightharpoonup0\) and \(s_n\rightharpoonup0\) in \(L^2(\Omega)\), and \(\lambda_n\overset{*}{\rightharpoonup}1\) in \(L^\infty(\Omega)\), but
\[
        B(e_n,s_n,\lambda_n)=\int_0^{2\pi}\sin^2(nx)\,\dd x=\pi
        \not\to 0=B(0,0,1).
\]
\end{proof}

\subsection{Game-theoretical formulation}

The ill-posedness of the constrained saddle problem (\ref{eq:saddle}), specifically the compactness and continuity defects just noted, call for suitable relaxations of the constraint and compactifications of the Lagrange multipliers. Useful intuition and suggestions of appropriate choices may be obtained by reformulating the saddle problem (\ref{eq:saddle}) as a zero-sum game. To this end, let
\begin{equation}
    \M(z,\lambda) := -\Lag(z,\lambda). 
\end{equation}
Then, the game of finding $(z_\ast,\lambda_\ast) \in \Z \times L^\infty_+(\Omega)$ such that
\begin{subequations} \label{eq:game}
\begin{align}
    &
    \Lag(z_\ast,\lambda_\ast)
    \le
    \Lag(z,\lambda_\ast) ,
    \quad \forall z\in\Z,
    \\ &
    \M(z_\ast,\lambda_\ast)
    \le
    \M(z_\ast,\lambda) ,
    \quad \forall \lambda\in L^\infty_+(\Omega),
\end{align}
\end{subequations}
is equivalent to the saddle problem (\ref{eq:saddle}). In this setting, $(z_\ast,\lambda_\ast)$ is referred to as a Nash equilibrium. 

The game reformulation does not change the mathematical content of the saddle problem: a pure Nash equilibrium of the zero-sum game is precisely a saddle point of the Lagrangian.  Its value is instead organizational and selective.  It separates the two optimality requirements into two best-response conditions and, therefore, makes explicit that a Lagrangian solution is a primal minimizer whose optimality is supported by a multiplier.  Thus, the game formulation identifies the multiplier-supported subset of the primal solution set, rather than necessarily all primal minimizers.

This viewpoint is useful because it indicates what must be proved in order to obtain saddle solutions by fixed-point methods: compact strategy sets, nonempty best responses, convexity of the values, and closedness of the best-response graph; see, e.g., \cite{Glicksberg1952}.  It also suggests natural compactified and penalized games that overcome the compactness and continuity defects of the saddle problem (\ref{eq:saddle}). In this way the game formulation provides a systematic approximation and selection mechanism.  

We formalize these heuristics in the following theorem. 

\begin{theorem}[Bounded multipliers and exact penalization] \label{thm:exact-p}
Let \(R>0\) and define
\begin{equation} \label{Rqq7mF}
        \Lambda_R
        :=
        \{\lambda\in L^\infty(\Omega):0\le \lambda\le R \text{ a.e. in }\Omega\}.
\end{equation}
There holds:
\begin{equation}
        \J_R(z)
        :=
        \sup_{\lambda\in\Lambda_R}\Lag(z,\lambda) 
        =
        \I_\E(z)+d^2(z,\D)
        +
        R\int_\Omega (e\cdot s)_+\,\dd x,
        \label{eq:bounded-multiplier-penalty}
\end{equation}
where \(a_+:=\max\{a,0\}\). 
\end{theorem}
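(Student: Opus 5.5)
Since \(\I_\E(z)+d^2(z,\D)\) does not depend on \(\lambda\), the identity reduces to evaluating
\[
\sup_{\lambda\in\Lambda_R}\int_\Omega \lambda\,e\cdot s\,\dd x = R\int_\Omega (e\cdot s)_+\,\dd x
\]
for fixed \(z=(e,s)\in\Z\). I adopt the convention that if \(z\notin\E\) both sides equal \(+\infty\); the multiplier term is finite because \(\lambda\in L^\infty\) and \(e\cdot s\in L^1(\Omega)\), so no \(\infty-\infty\) ambiguity arises.

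I will prove the two inequalities separately, working pointwise with \(w:=e\cdot s\in L^1(\Omega)\).

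For the upper bound, note that for any \(\lambda\in\Lambda_R\) and a.e. \(x\),
\[
\lambda(x)w(x)\le \lambda(x)w_+(x)\le R\,w_+(x),
\]
since \(0\le\lambda\le R\). Integrating gives \(\int_\Omega\lambda w\,\dd x\le R\int_\Omega w_+\,\dd x\).

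For the lower bound, I will exhibit a maximizer rather than a maximizing sequence. Take
\[
\lambda^\ast:=R\,\chi_{\{w>0\}}.
\]
This function is measurable, takes values in \(\{0,R\}\), and so lies in \(\Lambda_R\). It satisfies \(\lambda^\ast w=R\,w_+\) a.e., hence \(\int_\Omega\lambda^\ast w\,\dd x=R\int_\Omega w_+\,\dd x\). The supremum is therefore attained, which also identifies the best-response multiplier of the bounded game: it saturates at \(R\) on the violation set and vanishes elsewhere.

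Adding back the \(\lambda\)-independent terms gives \eqref{eq:bounded-multiplier-penalty}. I expect no real obstacle here. The only care needed is the bookkeeping for \(z\notin\E\), and the measurability and integrability of \(\lambda^\ast w\), which follows from \(w\in L^1\) by Hölder.
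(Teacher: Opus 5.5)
Your proposal is correct and matches the paper's proof. The case $z\notin\E$ is handled by both sides being $+\infty$, the upper bound comes from the pointwise estimate $\lambda\,e\cdot s\le R\,(e\cdot s)_+$ for $\lambda\in\Lambda_R$, and the supremum is attained by the bang-bang multiplier $R\,\chi_{\{e\cdot s>0\}}$.
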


\begin{proof}
Fix \(z=(e,s)\in\Z\). If \(z\notin\E\), then \(\I_\E(z)=+\infty\), and both
sides of \eqref{eq:bounded-multiplier-penalty} are \(+\infty\). Assume,
therefore, that \(z\in\E\). Since \(e,s\in L^2(\Omega;\mathbb R^N)\), we have
\(
        g:=e\cdot s\in L^1(\Omega).
\)
Thus, for every \(\lambda\in\Lambda_R\),
\[
        \int_\Omega \lambda g\,\dd x
        \le
        \int_\Omega R g_+\,\dd x.
\]
This gives
\[
        \sup_{\lambda\in\Lambda_R}
        \int_\Omega \lambda g\,\dd x
        \le
        R\int_\Omega g_+\,\dd x.
\]
Conversely, the choice
\[
        \lambda_R(x)
        :=
        R\, \chi_{\{ x ~ s.t. ~ g(x)>0\}}
        \label{SbpYqt}
\]
belongs to \(\Lambda_R\) and gives
\[
        \int_\Omega \lambda_R g\,\dd x
        =
        R\int_\Omega g_+\,\dd x.
\]
Therefore
\[
        \sup_{\lambda\in\Lambda_R}
        \int_\Omega \lambda e\cdot s\,\dd x
        =
        R\int_\Omega (e\cdot s)_+\,\dd x.
\]
Adding the terms \(\I_\E(z)+d^2(z,\D)\), which are independent of \(\lambda\), proves \eqref{eq:bounded-multiplier-penalty}. 
\end{proof}

The maximizing bounded multiplier is therefore bang-bang: it marks the violation set \(\{e\cdot s>0\}\) rather than acting as an unconstrained reaction enforcing the hard constraint.

A consequence of the compactification (\ref{Rqq7mF}) is that 
the upper value of the truncated multiplier game is the modified primal problem
\begin{equation}
        \inf_{z\in\Z}\J_R(z)
        =
        \inf_{z\in\E}
        \left\{
        d^2(z,\D)
        +
        R\int_\Omega (e\cdot s)_+\,\dd x
        \right\} ,
        \label{eq:truncated-game-penalized-primal}
\end{equation}
which is a \emph{penalty reformulation} of the constrained primal minimum problem. Thus, the net outcome of the Lagrange multiplier excursion is to identify an appropriate penalty reformulation of the original primal minimum problem. The solution of problem \eqref{SbpYqt}, if it exists, satisfies $\lambda = 0$ when $e\cdot s < 0$, $\lambda = \lambda(x)$ (variable, in general) when $e\cdot s = 0$ and the set of points with $e\cdot s > 0$ has zero measure. Should we solve the problem
\begin{align*}
(z_R , \lambda_R)  = \arg \inf_{z\in \Z} \sup_{\lambda\in \Lambda_R} {\Lag}(z,\lambda)
\end{align*}
we would obtain $\lambda_R = 0$ when $e_R\cdot s_R < 0$, $\lambda_R = \lambda_R(x)$ when $e_R\cdot s_R = 0$ and $\lambda_R = R$  when $e_R\cdot s_R > 0$. According to Theorem~\ref{thm:exact-p}, we do not need to solve for $\lambda_R$, as the solution $z_R$ (and, consequently, $u_R$) is the same as that of the penalty formulation \eqref{eq:truncated-game-penalized-primal}.

\section{Penalty enforcement of the thermodynamic constraint}

The bounded-multiplier formula \eqref{eq:bounded-multiplier-penalty} suggests replacing the hard thermodynamic indicator by the finite penalty
\begin{equation}
        \mathcal P(z)
        :=
        \int_\Omega (e\cdot s)_+\,\dd x,
        \quad z=(e,s)\in\Z.
        \label{eq:positive-part-penalty}
\end{equation}
Then, for \(R>0\), the penalized Data-Driven problem is (\ref{eq:truncated-game-penalized-primal}), i.e., minimization of the functional obtained from the truncated multiplier set \(\Lambda_R\) in the bounded-multiplier formula \eqref{eq:bounded-multiplier-penalty}.

The penalized formulation replaces the nonconvex indicator \(\I_{\Cth}\) by the
weakly lower-semicontinuous positive-part penalty \(R\mathcal P\) on the
material-independent set \(\E\).  For finite \(R\), violations of the second-law
inequality are allowed but charged in proportion to their positive part.  In the
limit \(R\to+\infty\), finite energy forces
\[
        \int_\Omega (e\cdot s)_+\,\dd x=0,
\]
and hence recovers the constrained feasible set \(\E\cap\Cth\).

For every \(R>0\), the minimum value
\[
        m_R:=\inf_{z\in\Z}\J_R(z)
\]
satisfies
\[
        m_R\le m_{\Cth}:=
        \inf_{z\in\Z}\J_\Cth
        =
        \inf_{z\in\E\cap\Cth}d^2(z,\D),
\]
because \(\J_R=\J_\Cth\) on \(\E\cap\Cth\).  Also, \(R\mapsto \J_R(z)\) is
nondecreasing for each fixed \(z\), since \(\mathcal P(z)\ge0\).  The following
results make the limiting statement precise.

We proceed to show that the primal constrained functional $\J_\Cth$, \eqref{eq:J-C}, is indeed recovered from the penalized functionals $\J_R$ as $R\to+\infty$ in the sense of $\Gamma$-convergence. Heuristically, this limit is suggested by the fact that \(\mathcal P(z)=0\) if and only if \((e\cdot s)_+=0\) a.e., or, equivalently, \(e\cdot s\le0\) a.e. In addition, we show that the sequence $(\J_R)$ is equi-coercive, which, together with $\Gamma$-convergence ensures convergence of minimizers. 

\subsection{Properties of the penalized functional}

The following observation is the penalty analogue of the compensated-compactness closedness result used for the indicator formulation.  It uses the convexity of the scalar map \(a\mapsto a_+\).

\begin{theorem}[Weak lower semicontinuity of the positive-part penalty]
\label{thm:penalty-wlsc}
The functional \(\mathcal P\) defined by \eqref{eq:positive-part-penalty} is weakly sequentially lower semicontinuous on \(\E\).  Consequently, if \(\zeta\in L^\infty(\Omega)\)  and \(z\mapsto d^2(z,\D)\) is weakly sequentially lower semicontinuous on \(\Z\), then \(\J_R\) is weakly sequentially lower semicontinuous on \(\Z\) for every \(R>0\).
\end{theorem}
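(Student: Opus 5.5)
The plan is to run the same compensated-compactness argument as in the proof of weak closedness of \(\E\cap\Cth\), combined with a dual representation of the positive part. For \(a\in\R\) we have \(a_+=\sup_{0\le t\le 1} t\,a\). Hence, for \(w\in L^1(\Omega)\),
\[
        \int_\Omega w_+\,\dd x
        =
        \sup\Big\{\int_\Omega \varphi\, w\,\dd x:\ \varphi\in C_c^\infty(\Omega),\ 0\le\varphi\le1\Big\}.
\]
The inequality "\(\ge\)" is immediate. For "\(\le\)", approximate \(\chi_{\{w>0\}}\) by smooth compactly supported functions \(\varphi_k\) with values in \([0,1]\) and \(\varphi_k\to\chi_{\{w>0\}}\) a.e. This is possible by inner regularity, Lusin's theorem and mollification of cut-offs. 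Dominated convergence with majorant \(|w|\in L^1\) then gives the limit.

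Next take \(z_j=(e_j,s_j)\in\E\) with \(z_j\rightharpoonup z=(e,s)\) weakly in \(\Z\).
\begin{enumerate}
\item By the weak closedness lemma for \(\E\), we have \(z\in\E\).
\item Exactly as in the compensated-compactness theorem, the potentials satisfy \(u_j\to u\) strongly in \(L^2(\Omega)\). Therefore \(\diver s_j=q-\zeta u_j\) converges strongly in \(H^{-1}(\Omega)\), using \(\zeta\in L^\infty(\Omega)\).
\item Since \(\operatorname{curl}e_j=0\), the div-curl lemma gives \(e_j\cdot s_j\to e\cdot s\) in \(\mathcal D'(\Omega)\).
\end{enumerate}
Fix \(\varphi\in C_c^\infty(\Omega)\) with \(0\le\varphi\le1\). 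Then
\[
        \int_\Omega (e_j\cdot s_j)_+\,\dd x\ \ge\ \int_\Omega\varphi\,e_j\cdot s_j\,\dd x\ \longrightarrow\ \int_\Omega\varphi\,e\cdot s\,\dd x .
\]
Consequently \(\liminf_j\mathcal P(z_j)\ge\int_\Omega\varphi\,e\cdot s\,\dd x\) for every such \(\varphi\). Taking the supremum over \(\varphi\) and applying the dual representation with \(w=e\cdot s\in L^1(\Omega)\) yields \(\liminf_j\mathcal P(z_j)\ge\mathcal P(z)\). This argument does not require passing to subsequences, because the div-curl convergence holds along the whole sequence once the compactness of \(\diver s_j\) is established. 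If that compactness is only available along subsequences, one first extracts a subsequence realizing the \(\liminf\).

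For \(\J_R\), take \(z_j\rightharpoonup z\) in \(\Z\) and assume \(\liminf_j\J_R(z_j)<\infty\); otherwise there is nothing to prove. Pass to a subsequence realizing the \(\liminf\) along which \(\J_R(z_j)<\infty\), so that \(z_j\in\E\). Then \(z\in\E\) by weak closedness, so \(\I_\E(z)=0\le\liminf_j\I_\E(z_j)\). Moreover \(d^2(\cdot,\D)\) is weakly lower semicontinuous by assumption, and \(R\mathcal P\) is weakly lower semicontinuous along sequences in \(\E\) by the first part. The \(\liminf\) of the sum is at least the sum of the \(\liminf\)s, which gives the claim.

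The main obstacle is the step from distributional convergence of \(e_j\cdot s_j\), which only tests against compactly supported smooth \(\varphi\) and carries no \(L^1\)-weak control up to the boundary, to the integral of the positive part over all of \(\Omega\). The supremum representation over \(C_c^\infty\) test functions with values in \([0,1]\) is designed exactly to get around this. Its density justification, approximating \(\chi_{\{e\cdot s>0\}}\) in \(L^1(|e\cdot s|\,\dd x)\) by such \(\varphi\), is the step to write carefully.
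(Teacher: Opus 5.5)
Your proposal is correct and follows essentially the same route as the paper's proof: weak closedness of $\E$, compactness of the potentials giving strong $H^{-1}$ convergence of $\diver s_j$, the div--curl lemma, the pointwise bound $\varphi\,e_j\cdot s_j\le(e_j\cdot s_j)_+$ for $0\le\varphi\le1$, and a supremum over such $\varphi$. Your density justification of $\int_\Omega w_+\,\dd x=\sup_{\varphi}\int_\Omega\varphi\,w\,\dd x$, your subsequence bookkeeping for the $\liminf$, and your explicit argument for $\J_R$ fill in details that the paper leaves implicit.
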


\begin{proof}
Let \(z_j=(e_j,s_j)\in\E\) and suppose that \(z_j\rightharpoonup z=(e,s)\) weakly in \(\Z\).  By weak closedness of \(\E\), we have \(z\in\E\).  Choose associated potentials \(u_j\in H^1_g(\Omega)\) with \(e_j=\grad u_j\) and \(\diver s_j+\zeta u_j-q=0\).  As before, \((u_j)\) is bounded in \(H^1(\Omega)\), and, up to subsequences,
\[
        u_j\rightharpoonup u\text{ in }H^1(\Omega),
        \quad
        u_j\to u\text{ in }L^2(\Omega),
\]
where \(e=\grad u\) and \(\diver s+\zeta u-q=0\).  Hence
\[
        \diver s_j=q-\zeta u_j\to q-\zeta u=\diver s
        \quad\text{strongly in }H^{-1}(\Omega),
\]
and \(\operatorname{curl}e_j=0\).  The div-curl lemma \cite{Murat1978,Tartar1979} gives
\begin{equation}
        e_j\cdot s_j\to e\cdot s
        \quad\text{in }\mathcal D'(\Omega).
        \label{eq:product-distribution-penalty}
\end{equation}
For every \(\varphi\in C_c^\infty(\Omega)\) with \(0\le\varphi\le1\),
\[
        \varphi\,e_j\cdot s_j\le (e_j\cdot s_j)_+
        \quad\text{a.e. in }\Omega.
\]
Indeed, this is immediate where \(e_j\cdot s_j\ge0\), and trivial where \(e_j\cdot s_j<0\). Integrating, passing to the distributional limit on the left using \eqref{eq:product-distribution-penalty}, and taking the liminf on the right gives
\[
        \int_\Omega \varphi\,e\cdot s\,\dd x
        \le
        \liminf_{j\to\infty}
        \int_\Omega (e_j\cdot s_j)_+\,\dd x.
\]
Taking the supremum over \(0\le\varphi\le1\) yields
\[
        \mathcal P(z)=\int_\Omega (e\cdot s)_+\,\dd x
        \le
        \liminf_{j\to\infty}\mathcal P(z_j).
\]
This proves weak sequential lower semicontinuity of \(\mathcal P\) on \(\E\).
The asserted lower semicontinuity of \(\J_R\) follows from the weak closedness of
\(\E\), the lower semicontinuity of \(d^2(\cdot,\D)\), and the nonnegative
coefficient \(R\).
\end{proof}

The $\Gamma$-convergence of the sequence $\J_R$ follows immediately from the monotonicity of the sequence. 

\begin{corollary}[Immediate \(\Gamma\)-convergence consequence] \label{TGPxN5}
The sequence $(\J_R)$ 
is monotone increasing in \(R\), and its pointwise supremum is
\[
        \sup_{R>0}\mathcal J_R(z)
        =
        \mathcal I_{\mathcal E\cap\mathcal C}(z)
        +d^2(z,\mathcal D)
        =:\mathcal J_\Cth(z).
\]
Hence,
\[
        \mathcal J_\Cth
        =
        \Gamma \hbox{-}\lim_{R\to+\infty} \J_R
\]
in the weak topology of \(\Z\). 
\end{corollary}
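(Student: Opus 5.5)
The plan is to verify the three claims in order: monotonicity, identification of the pointwise supremum, and the $\Gamma$-limit via the standard theorem on increasing sequences of lower semicontinuous functionals.

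\emph{Monotonicity.} For fixed $z\in\Z$, $R\mapsto\J_R(z)=\I_\E(z)+d^2(z,\D)+R\,\mathcal P(z)$ is nondecreasing because $\mathcal P(z)\ge0$.

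\emph{Pointwise supremum.} I would split into cases.
\begin{itemize}
\item If $z\notin\E$, both $\sup_R\J_R(z)$ and $\J_\Cth(z)$ equal $+\infty$.
\item If $z\in\E\cap\Cth$, then $(e\cdot s)_+=0$ a.e., so $\mathcal P(z)=0$. Hence $\J_R(z)=d^2(z,\D)=\J_\Cth(z)$ for every $R$.
\item If $z\in\E\setminus\Cth$, then $e\cdot s>0$ on a set of positive measure, so $\mathcal P(z)>0$. Then $R\,\mathcal P(z)\to+\infty$, which matches $\I_\Cth(z)=+\infty$.
\end{itemize}
This gives $\sup_{R>0}\J_R=\J_\Cth$ pointwise on $\Z$.

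\emph{$\Gamma$-convergence.} I would invoke the classical fact (Dal Maso, Prop.~5.4, recalled in the appendix toolkit) that an increasing family of lower semicontinuous functionals $\Gamma$-converges to its pointwise supremum. To be self-contained on bounded sets, where the weak topology is metrizable, I would check the two inequalities directly along any $R_k\to\infty$.

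For the liminf inequality, let $z_k\rightharpoonup z$ and fix $R_0$. For $k$ large, $R_k\ge R_0$, so monotonicity gives $\J_{R_k}(z_k)\ge\J_{R_0}(z_k)$. Theorem~\ref{thm:penalty-wlsc} then yields $\liminf_k\J_{R_k}(z_k)\ge\J_{R_0}(z)$. Taking the supremum over $R_0$ gives $\liminf_k\J_{R_k}(z_k)\ge\J_\Cth(z)$.

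For the recovery sequence, take the constant sequence $z_k=z$. If $z\in\E\cap\Cth$, then $\J_{R_k}(z)=\J_\Cth(z)$. Otherwise $\J_\Cth(z)=+\infty$ and there is nothing to prove.

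\emph{Main obstacle.} The only nontrivial ingredient is the weak lower semicontinuity of each $\J_R$. That property needs $d^2(\cdot,\D)$ to be weakly lower semicontinuous, which is the standing hypothesis of Theorem~\ref{thm:penalty-wlsc} (it holds, e.g., for weakly closed or linear $\D$). Beyond that, it rests on the div-curl argument already established there. I would state this hypothesis explicitly, noting that without it the supremum $\J_\Cth$ would be replaced by its lower semicontinuous envelope. Sequential $\Gamma$-convergence suffices by Lemma~\ref{Sur2ng}, since the argument never leaves bounded sets.
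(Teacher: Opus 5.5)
Your proposal is correct and matches the paper's proof. The paper likewise applies the monotone-convergence property of $\Gamma$-convergence to the increasing family $(\J_R)$, whose members are weakly sequentially lower semicontinuous by Theorem~\ref{thm:penalty-wlsc}. Your explicit liminf and constant-recovery check, and your point that the corollary implicitly assumes weak lower semicontinuity of $d^2(\cdot,\D)$, just spell that argument out.

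One small correction: Lemma~\ref{Sur2ng} needs the union of sublevel sets $\bigcup_R\{\J_R<C\}$ to be bounded. That comes from the transversality estimate of Theorem~\ref{thm:JR-equicoercive}, not from the fact that weakly convergent sequences are bounded. Without that estimate, what you (and the paper) actually establish is sequential $\Gamma$-convergence.
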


\begin{proof}
Since Theorem~\ref{thm:penalty-wlsc} gives the weak lower semicontinuity of the positive-part penalty on \(\mathcal E\), the limiting functional \(\mathcal J_\Cth\) is weakly lower semicontinuous.  Therefore, the standard monotone-convergence property of \(\Gamma\)-convergence \cite{DalMaso1993, Braides2006} gives
\[
        \mathcal J_\Cth
        =
        \Gamma\text{-}\lim_{R\to+\infty}\mathcal J_R
\]
with respect to the weak topology of \(\Z\).
\end{proof}

In order to ascertain the convergence of the penalized minimizers, we investigate the equicoercivity of the sequence $(\J_R)$.

\begin{theorem}[Equicoercivity of the penalty family]
\label{thm:JR-equicoercive}
Assume that there exist constants \(c>0\) and \(b\ge0\) such that
\begin{equation}
        \|y-z\|_\Z
        \ge
        c\left(\|y\|_\Z+\|z\|_\Z\right)-b
        \quad
        \forall y\in\D,
        \quad
        \forall z\in\E .
        \label{eq:fixed-D-transversality}
\end{equation}
Then, the family \((\J_R)_{R>0}\) is equicoercive with respect to the weak
topology of \(\Z\).  More precisely, for every \(C<+\infty\),
\begin{equation}
        \bigcup_{R>0}
        \{z\in\Z:\J_R(z)\le C\}
        \label{eq:JR-uniform-sublevel}
\end{equation}
is bounded in \(\Z\), and therefore relatively weakly sequentially compact.
\end{theorem}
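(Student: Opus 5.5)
The plan is to reduce the sublevel bound to the transversality estimate \eqref{eq:fixed-D-transversality}, exactly as in the proof of Theorem~\ref{thm:general-dd-existence}. The penalty term is nonnegative, so the penalty parameter never enters the estimate.

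Fix \(C<+\infty\) and let \(z\) lie in the union \eqref{eq:JR-uniform-sublevel}. Then \(\J_R(z)\le C\) for some \(R>0\). Since \(\J_R(z)<+\infty\), we have \(\I_\E(z)=0\), that is, \(z\in\E\). Moreover \(R\,\mathcal P(z)\ge0\), so
\[
        d^2(z,\D)\le \J_R(z)\le C .
\]
This is the key observation: the bound on \(d(z,\D)\) is independent of \(R\), because the positive-part penalty can only increase the energy.

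Next, by the definition of the distance as an infimum, we pick \(y\in\D\) with \(\|z-y\|_\Z\le d(z,\D)+1\le C^{1/2}+1\). Applying \eqref{eq:fixed-D-transversality} to this pair \(y\in\D\), \(z\in\E\) gives
\[
        c\,\|z\|_\Z\le c\bigl(\|y\|_\Z+\|z\|_\Z\bigr)\le \|y-z\|_\Z+b\le C^{1/2}+1+b .
\]
Hence \(\|z\|_\Z\le (C^{1/2}+1+b)/c\). This bound holds uniformly over all \(R>0\), so the union \eqref{eq:JR-uniform-sublevel} is bounded in \(\Z\). If \(\D\) is empty, \(d\equiv+\infty\) and the sublevel sets are empty, so the statement is trivial in that case.

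Finally, \(\Z=L^2(\Omega;\R^N)\times L^2(\Omega;\R^N)\) is a separable Hilbert space and hence reflexive. By Eberlein--\v{S}mulian, or directly by Banach--Alaoglu together with the metrizability of the weak topology on bounded sets noted in Section~\ref{fcfUFg}, bounded sets are relatively weakly sequentially compact. This yields equicoercivity in the weak topology.

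I expect no step to be a genuine obstacle. The only point needing care is the choice of a near-optimal \(y\), since the infimum defining \(d(z,\D)\) need not be attained when \(\D\) is not closed. Taking a \(1\)-suboptimal element of \(\D\) handles this.
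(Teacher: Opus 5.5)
Your proof is correct and follows the paper's argument. Nonnegativity of the penalty gives $z\in\E$ and $d^2(z,\D)\le C$ uniformly in $R$, and transversality then bounds $\|z\|_\Z$ independently of $R$. The only difference is minor: the paper drops the term $c\|y\|_\Z\ge0$ and takes the infimum over $y\in\D$ directly in the transversality estimate, which gives $d(z,\D)\ge c\|z\|_\Z-b$ and the bound $(C^{1/2}+b)/c$. Your $1$-suboptimal choice of $y$ is therefore valid but not needed.
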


\begin{proof}
Let \(z\in\Z\) and \(R>0\) be such that \(\J_R(z)\le C\).  Then, \(z\in\E\) and
\[
        d^2(z,\D)\le \J_R(z)\le C,
\]
because the penalty term is nonnegative.  Taking the infimum over
\(y\in\D\) in \eqref{eq:fixed-D-transversality} gives
\[
        d(z,\D)
        \ge
        c\|z\|_\Z-b.
\]
Consequently,
\[
        c\|z\|_\Z-b
        \le
        d(z,\D)
        \le
        C^{1/2},
\]
and hence
\[
        \|z\|_\Z
        \le
        \frac{C^{1/2}+b}{c}.
\]
This bound is independent of \(R\).  Since \(\Z\) is reflexive, bounded subsets
of \(\Z\) are relatively weakly sequentially compact.
\end{proof}

\begin{corollary}[Convergence of almost minimizers]
\label{cor:minimizers-penalty}
Assume the hypotheses of Theorems~\ref{thm:penalty-wlsc} and \ref{thm:JR-equicoercive}.  Let \(R_j\to+\infty\), and let \((z_j)\subset\Z\)
be an almost-minimizing sequence in the sense that
\[
        \J_{R_j}(z_j)
        \le
        \inf_{z\in\Z}\J_{R_j}(z)+o(1).
\]
Then, \((z_j)\) is relatively weakly sequentially compact in \(\Z\).  Every weak
cluster point belongs to \(\E\cap\Cth \) and minimizes the constrained functional
\(\J_\Cth\).  Moreover,
\[
        \lim_{j\to\infty}\inf_{z\in\Z}\J_{R_j}(z)
        =
        \min_{z\in\Z}\J_\Cth,
\]
provided the right-hand minimum is finite.
\end{corollary}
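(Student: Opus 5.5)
The plan is to apply the fundamental theorem of $\Gamma$-convergence (Appendix~\ref{fCnmub}, the compactness and convergence-of-quasi-minimizers principle also invoked in Corollary~\ref{auBXLU}) to the family $(\J_{R_j})$. Two facts are already available. Corollary~\ref{TGPxN5} gives $\J_{R_j}\to\J_\Cth$ in the $\Gamma$ sense with respect to the weak topology of $\Z$. Theorem~\ref{thm:JR-equicoercive} gives equicoercivity. The latter can be used sequentially because the weak topology is metrizable on bounded sets, so Lemma~\ref{Sur2ng} applies.

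\textbf{Compactness.} Assume $m_\Cth:=\inf_\Z\J_\Cth<+\infty$. Since $m_{R_j}\le m_\Cth$, we get $\J_{R_j}(z_j)\le m_\Cth+o(1)\le C$ for $j$ large. Hence $(z_j)$ lies in the uniform sublevel set \eqref{eq:JR-uniform-sublevel}, which is bounded. By reflexivity, $(z_j)$ has weakly convergent subsequences.

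\textbf{Cluster points and minimality.} Let $z_{j'}\rightharpoonup z_\ast$. The $\Gamma$-liminf inequality, which in practice comes from Theorem~\ref{thm:penalty-wlsc} plus monotonicity, gives the following. For fixed $R$ and $j'$ large, $\J_R(z_{j'})\le\J_{R_{j'}}(z_{j'})$, so
\[
\J_R(z_\ast)\le\liminf_{j'}\J_{R_{j'}}(z_{j'})\le\liminf_{j'} m_{R_{j'}}\le m_\Cth.
\]
Taking the supremum over $R$ yields $\J_\Cth(z_\ast)\le m_\Cth<\infty$. Therefore $z_\ast\in\E\cap\Cth$ and $z_\ast$ is a minimizer, and in particular the minimum is attained. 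This argument also shows $\liminf_j m_{R_j}\ge m_\Cth$: apply it to a subsequence realizing the liminf, which still satisfies the same bounds. Combined with $m_{R_j}\le m_\Cth$, this gives $\lim_j m_{R_j}=m_\Cth$.

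\textbf{Where care is needed.} The only delicate point is the finiteness assumption. If $m_\Cth=+\infty$, the upper bound used in the compactness step fails. That is exactly why the statement is conditioned on a finite minimum, and we do not treat that case further. Membership $z_\ast\in\E$ needs no separate argument: it follows from the weak closedness of $\E$, and membership in $\Cth$ follows from the finiteness of $\J_\Cth(z_\ast)$, that is, ultimately from the compensated compactness built into Theorem~\ref{thm:penalty-wlsc}.
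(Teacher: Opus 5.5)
Your proposal is correct and follows the paper's route: you get compactness from the uniform sublevel bound of Theorem~\ref{thm:JR-equicoercive}, and you identify cluster points and limiting minimum values via the fundamental theorem of $\Gamma$-convergence (Theorem~\ref{K87zb7}) combined with the monotone $\Gamma$-limit of Corollary~\ref{TGPxN5}. The only difference is that you unpack that theorem directly, using monotonicity in $R$ plus the weak lower semicontinuity of each fixed $\J_R$ for the liminf bound and the trivial bound $m_{R}\le m_\Cth$ in place of recovery sequences, which makes the argument self-contained and purely sequential.
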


\begin{proof}
The equicoercivity theorem gives weak relative compactness of almost minimizers on bounded energy levels.  The identification of all cluster points as minimizers of \(\J_\Cth\), together with convergence of minimum values, is the fundamental theorem of \(\Gamma\)-convergence, see Appendix~\ref{fCnmub}, Theorem~\ref{K87zb7}.
\end{proof}

\section{All together now: Data, discretization and penalization}

Finally, we combine the three approximation mechanisms considered separately above: approximation of the material data set, spatial discretization of the field equations, and penalty enforcement of the thermodynamic constraint. For \(h,k\in\mathbb N\) and \(R>0\), define
\begin{equation}
        \J_{h,k,R}(z)
        :=
        \I_{\E_k}(z)
        +
        d^2(z,\D_h)
        +
        R\,\mathcal P(z),
        \quad z\in\Z,
        \label{eq:J-h-k-R}
\end{equation}
where
\[
        \mathcal P(z)
        =
        \int_\Omega (e\cdot s)_+\,\dd x,
        \quad z=(e,s).
\]
The intended limiting functional is the constrained Data-Driven functional $\J_{\Cth}$. Thus, the data sets \(\D_h\) approximate \(\D\), the discrete feasible sets \(\E_k\) approximate \(\E\), and the penalty parameter \(R\) enforces \(\mathcal P(z)=0\), equivalently \(z\in\Cth\), in the limit.

\subsection{Properties of the approximation scheme}

The following discrete compactness assumption is the natural analogue, for the discrete feasible sets, of the compensated-compactness property used in Theorem~\ref{thm:penalty-wlsc}:
\begin{equation}
\begin{gathered}
        k_j\to\infty,\quad z_j\in\E_{k_j},\quad
        z_j\rightharpoonup z\text{ weakly in }\Z
        \\[0.3em]
        \Longrightarrow
        \quad
        z\in\E,
        \quad
        \mathcal P(z)
        \le
        \liminf_{j\to\infty}\mathcal P(z_j).
\end{gathered}
        \label{eq:discrete-penalty-liminf}
\end{equation}
It is automatic for conforming discretizations for which the discrete fields satisfy the continuous differential constraints exactly, and it is the consistency condition that must be verified for nonconforming or weakly enforced schemes. In addition, the diagonal sequence must not drive \(R\) to infinity faster than the discretization can recover thermodynamically admissible states. We therefore call a diagonal scheme
\[
        j\mapsto (h_j,k_j,R_j),
        \quad
        h_j\to\infty,\quad k_j\to\infty,\quad R_j\to+\infty,
\]
\emph{admissible} if, for every \(z\in\E\cap\Cth\), there exists
\(z_j\in\E_{k_j}\) such that
\begin{equation}
        z_j\to z\;\, \text{(strongly)},
        \quad
        \limsup_{j\to\infty}d^2(z_j,\D_{h_j})
        \le
        d^2(z,\D),
        \quad
        R_j\mathcal P(z_j)\to0.
        \label{eq:admissible-diagonal-recovery}
\end{equation}
The last condition is the only new restriction introduced by the penalty:
strong recovery alone gives \(\mathcal P(z_j)\to0\), but the product
\(R_j\mathcal P(z_j)\) must also vanish. This requirement is better understood in the proof of the following Theorem.

\begin{theorem}[Diagonal \(\Gamma\)-convergence of the fully approximate problems]
\label{thm:all-together-gamma}
Assume that
\[
        \D=M\hbox{-}\lim_{h\to\infty}\D_h,
        \quad
        \E=M\hbox{-}\lim_{k\to\infty}\E_k
        \quad\text{in }\Z.
\]
Assume also the uniform transversality estimate
\begin{equation}
        \|y-z\|_\Z
        \ge
        c\bigl(\|y\|_\Z+\|z\|_\Z\bigr)-b
        \quad
        \forall y\in\D_h,\quad
        \forall z\in\E_k,
        \label{eq:h-k-R-transversality}
\end{equation}
with constants \(c>0\) and \(b\ge0\) independent of \(h\) and \(k\).
Finally, assume the discrete penalty lower-semicontinuity property
\eqref{eq:discrete-penalty-liminf}, and let
\((h_j,k_j,R_j)\) be an admissible diagonal in the sense of
\eqref{eq:admissible-diagonal-recovery}. Define
\[
        \widehat{\J}_j
        :=
        \J_{h_j,k_j,R_j}.
\]
Then
\begin{equation}
        \J_{\Cth}
        =
        \Gamma\hbox{-}\lim_{j\to\infty}\widehat{\J}_j
        \label{eq:all-together-gamma-limit}
\end{equation}
with respect to the weak topology of \(\Z\).
\end{theorem}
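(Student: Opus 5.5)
We prove the two halves of the sequential $\Gamma$-convergence characterization (Lemma~\ref{Sur2ng}) with respect to the weak topology of $\Z$: a liminf inequality along arbitrary weakly convergent sequences, and the existence of recovery sequences. Since the transversality estimate \eqref{eq:h-k-R-transversality} makes sublevel sets of $\widehat{\J}_j$ uniformly bounded (exactly as in Theorem~\ref{thm:JR-equicoercive}), the sequential characterization on bounded sets is legitimate.

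\emph{Liminf inequality.} Let $z_j\rightharpoonup z$ in $\Z$. We may assume $\liminf_j \widehat{\J}_j(z_j)=:L<+\infty$ and pass to a subsequence realizing it with $\widehat{\J}_j(z_j)\le C$. Then $z_j\in\E_{k_j}$, and by the discrete compactness assumption \eqref{eq:discrete-penalty-liminf} we get $z\in\E$ together with $\mathcal P(z)\le\liminf_j\mathcal P(z_j)$. Because $d^2(z_j,\D_{h_j})\ge0$, we have $R_j\mathcal P(z_j)\le C$, so $\mathcal P(z_j)\le C/R_j\to0$. Hence $\mathcal P(z)=0$, that is, $(e\cdot s)_+=0$ a.e. and $z\in\Cth$; this is exactly where the penalty lower semicontinuity replaces the compensated-compactness argument. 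For the distance term we use the liminf half of Mosco convergence of $\D_h$, in the form used in Theorem~\ref{udw8S6}. Pick $y_j\in\D_{h_j}$ with $\|z_j-y_j\|_\Z\le d(z_j,\D_{h_j})+1/j$. Transversality bounds $(y_j)$, so along a further subsequence $y_j\rightharpoonup y$, and Mosco convergence gives $y\in\D$. Weak lower semicontinuity of the norm then yields $d(z,\D)\le\|z-y\|_\Z\le\liminf_j d(z_j,\D_{h_j})$. Dropping the nonnegative penalty term, we conclude $\J_\Cth(z)=d^2(z,\D)\le L$.

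\emph{Recovery sequence.} If $z\notin\E\cap\Cth$, then $\J_\Cth(z)=+\infty$ and any sequence works, e.g.\ $z_j=z$. If $z\in\E\cap\Cth$, the admissibility \eqref{eq:admissible-diagonal-recovery} supplies $z_j\in\E_{k_j}$ with $z_j\to z$ strongly, $\limsup_j d^2(z_j,\D_{h_j})\le d^2(z,\D)$, and $R_j\mathcal P(z_j)\to0$. Therefore $\limsup_j\widehat{\J}_j(z_j)\le d^2(z,\D)=\J_\Cth(z)$. Strong convergence implies weak convergence, which completes the upper bound.

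\emph{Main obstacle.} The delicate step is the liminf for the data term. There the recovery sequence comes from the hypothesis, and we must be careful that Mosco convergence of $\D_h$ combined with boundedness of near-projections really gives a limit point in $\D$. This is the point where uniform transversality (bounding $y_j$ independently of $h$ and $k$) is essential, so I would check that step carefully. The penalty-specific content is then light: in the liminf it reduces to $\mathcal P(z_j)\le C/R_j$ plus \eqref{eq:discrete-penalty-liminf}, and in the limsup it is built into the admissibility assumption. That assumption is precisely why $R_j\mathcal P(z_j)\to0$ has to be required rather than derived.
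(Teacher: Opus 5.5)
Your proposal is correct and follows the paper's proof essentially step for step. In the liminf you use \eqref{eq:discrete-penalty-liminf} for $z\in\E$ and $\mathcal P(z)\le\liminf_j\mathcal P(z_j)=0$ (via $R_j\mathcal P(z_j)\le C$), and you take near-projections $y_j\in\D_{h_j}$, bounded by uniform transversality, with the Mosco-liminf giving $y\in\D$. In the limsup you read the recovery sequence directly off \eqref{eq:admissible-diagonal-recovery}; your explicit choice of a subsequence realizing the liminf is, if anything, slightly more careful than the paper's phrasing.
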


\begin{proof}
We prove the two sequential \(\Gamma\)-inequalities on bounded sublevel sets,
where the weak topology of \(\Z\) is metrizable.

First, let \(z_j\rightharpoonup z\) weakly in \(\Z\). If
\[
        \liminf_{j\to\infty}\widehat{\J}_j(z_j)=+\infty,
\]
there is nothing to prove. Passing to a subsequence, not relabeled, we may
therefore assume that
\[
        \sup_j \widehat{\J}_j(z_j)<+\infty.
\]
Then \(z_j\in\E_{k_j}\), and
\[
        d^2(z_j,\D_{h_j})\le \widehat{\J}_j(z_j).
\]
Choose \(y_j\in\D_{h_j}\) such that
\[
        \|z_j-y_j\|_\Z
        \le
        d(z_j,\D_{h_j})+\frac1j.
\]
The transversality estimate \eqref{eq:h-k-R-transversality} gives
boundedness of both \((z_j)\) and \((y_j)\) in \(\Z\). Up to a further
subsequence,
\[
        y_j\rightharpoonup y
        \quad\text{weakly in }\Z.
\]
Since \(\D=M\hbox{-}\lim\D_h\), the Mosco-liminf condition for sets gives
\(y\in\D\). Likewise, by the discrete compactness part of
\eqref{eq:discrete-penalty-liminf}, \(z\in\E\). Hence, by weak lower
semicontinuity of the norm,
\[
        d(z,\D)
        \le
        \|z-y\|_\Z
        \le
        \liminf_{j\to\infty}\|z_j-y_j\|_\Z
        \le
        \liminf_{j\to\infty}\left(d(z_j,\D_{h_j})+\frac1j\right).
\]
Moreover, since \(R_j\to+\infty\) and
\[
        R_j\mathcal P(z_j)
        \le
        \widehat{\J}_j(z_j),
\]
we have
\[
        \mathcal P(z_j)\to0.
\]
Using \eqref{eq:discrete-penalty-liminf},
\[
        0\le \mathcal P(z)
        \le
        \liminf_{j\to\infty}\mathcal P(z_j)
        =
        0.
\]
Thus \(\mathcal P(z)=0\), i.e. \(z\in\Cth\). Therefore
\(z\in\E\cap\Cth\), and
\[
        \J_{\Cth}(z)
        =
        d^2(z,\D)
        \le
        \liminf_{j\to\infty}d^2(z_j,\D_{h_j})
        \le
        \liminf_{j\to\infty}\widehat{\J}_j(z_j).
\]
This proves the \(\Gamma\)-liminf inequality.

For the \(\Gamma\)-limsup inequality, let \(z\in\Z\). If
\(\J_{\Cth}(z)=+\infty\), there is nothing to prove. Otherwise
\(z\in\E\cap\Cth\). By admissibility of the diagonal, there exists
\(z_j\in\E_{k_j}\) such that
\[
        z_j\to z\quad\text{strongly in }\Z,
        \quad
        \limsup_{j\to\infty}d^2(z_j,\D_{h_j})
        \le
        d^2(z,\D),
        \quad
        R_j\mathcal P(z_j)\to0.
\]
Hence
\[
        \limsup_{j\to\infty}\widehat{\J}_j(z_j)
        =
        \limsup_{j\to\infty}
        \left(
        d^2(z_j,\D_{h_j})
        +
        R_j\mathcal P(z_j)
        \right)
        \le
        d^2(z,\D)
        =
        \J_{\Cth}(z).
\]
This proves the recovery inequality and completes the proof.
\end{proof}

\begin{theorem}[Equicoercivity of the fully approximate family]
\label{thm:h-k-R-equicoercive}
Assume the uniform transversality estimate
\eqref{eq:h-k-R-transversality}. Then the family
\[
        \{\J_{h,k,R}: h,k\in\mathbb N,\ R>0\}
\]
is equicoercive with respect to the weak topology of \(\Z\). More precisely,
for every \(C<+\infty\),
\begin{equation}
        \bigcup_{h,k\in\mathbb N,\ R>0}
        \{z\in\Z:\J_{h,k,R}(z)\le C\}
        \label{eq:h-k-R-sublevels}
\end{equation}
is bounded in \(\Z\), and therefore relatively weakly sequentially compact.
\end{theorem}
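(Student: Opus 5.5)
The plan mirrors the proof of Theorem~\ref{thm:JR-equicoercive}, with the fixed sets \(\D\) and \(\E\) replaced by the whole families \(\D_h\) and \(\E_k\). The one new ingredient is that the constants in \eqref{eq:h-k-R-transversality} do not depend on \(h\) or \(k\).

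Fix \(C<+\infty\), and take indices \(h,k\in\mathbb N\), \(R>0\) and a state \(z\in\Z\) with \(\J_{h,k,R}(z)\le C\). Since \(\J_{h,k,R}(z)\) is finite, the indicator term forces \(z\in\E_k\). The penalty \(R\,\mathcal P(z)\) is nonnegative, so
\[
        d^2(z,\D_h)\le \J_{h,k,R}(z)\le C .
\]
Next, apply \eqref{eq:h-k-R-transversality} with this \(z\in\E_k\) and an arbitrary \(y\in\D_h\). Dropping the nonnegative term \(c\|y\|_\Z\) gives
\[
        \|y-z\|_\Z\ge c\|z\|_\Z-b .
\]
Taking the infimum over \(y\in\D_h\) yields \(d(z,\D_h)\ge c\|z\|_\Z-b\). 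Combining the two bounds,
\[
        \|z\|_\Z\le \frac{C^{1/2}+b}{c}.
\]
This bound does not depend on \(h\), \(k\) or \(R\), so the union \eqref{eq:h-k-R-sublevels} is bounded in \(\Z\).

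Finally, \(\Z\) is a Hilbert space and hence reflexive. Its bounded subsets are therefore relatively weakly sequentially compact, by Eberlein--\v{S}mulian or directly, as recalled earlier using the separability of \(\Z\). This is exactly the claimed equicoercivity.

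There is no real obstacle. The only point needing care is that the uniformity of \(c\) and \(b\) over \(h\) and \(k\) is what makes the bound independent of all three parameters. The penalty and the specific \(R\) play no role beyond being nonnegative.
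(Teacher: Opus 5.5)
Your proposal is correct and follows essentially the same route as the paper's proof. Both show \(z\in\E_k\) with \(d^2(z,\D_h)\le C\), take the infimum over \(y\in\D_h\) in the uniform transversality estimate to get \(\|z\|_\Z\le (C^{1/2}+b)/c\) independently of \(h,k,R\), and conclude by reflexivity of \(\Z\).
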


\begin{proof}
Let \(z\in\Z\) satisfy \(\J_{h,k,R}(z)\le C\). Then \(z\in\E_k\), and, since
the penalty term is nonnegative,
\[
        d^2(z,\D_h)\le C.
\]
Taking the infimum over \(y\in\D_h\) in
\eqref{eq:h-k-R-transversality} gives
\[
        d(z,\D_h)
        \ge
        c\|z\|_\Z-b.
\]
Therefore
\[
        c\|z\|_\Z-b
        \le
        d(z,\D_h)
        \le
        C^{1/2},
\]
and hence
\[
        \|z\|_\Z
        \le
        \frac{C^{1/2}+b}{c}.
\]
This bound is independent of \(h\), \(k\), and \(R\). Since \(\Z\) is
reflexive, bounded subsets of \(\Z\) are relatively weakly sequentially
compact.
\end{proof}

\begin{corollary}[Convergence of fully approximate minimizers]
\label{cor:h-k-R-minimizers}
Assume the hypotheses of Theorem~\ref{thm:all-together-gamma} and
Theorem~\ref{thm:h-k-R-equicoercive}. Let
\[
        \widehat{\J}_j:=\J_{h_j,k_j,R_j}
\]
for an admissible diagonal \((h_j,k_j,R_j)\), and let
\((z_j)\subset\Z\) be approximate minimizers,
\begin{equation}
        \widehat{\J}_j(z_j)
        \le
        \inf_{z\in\Z}\widehat{\J}_j(z)+\varepsilon_j,
        \quad
        \varepsilon_j\downarrow0.
        \label{eq:h-k-R-approx-min}
\end{equation}
Assume that the constrained limiting problem has finite value,
\[
        \min_{z\in\Z}\J_{\Cth}(z)<+\infty.
\]
Then \((z_j)\) is relatively weakly sequentially compact in \(\Z\). Every weak
cluster point \(z_\ast\) belongs to \(\E\cap\Cth\) and minimizes
\(\J_{\Cth}\). Moreover,
\[
        \lim_{j\to\infty}
        \inf_{z\in\Z}\widehat{\J}_j(z)
        =
        \min_{z\in\Z}\J_{\Cth}(z).
\]
In particular, if the constrained problem has a unique minimizer \(z_\ast\),
then the full sequence converges weakly,
\[
        z_j\rightharpoonup z_\ast
        \quad\text{in }\Z.
\]
\end{corollary}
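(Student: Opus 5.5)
The plan is to deduce Corollary~\ref{cor:h-k-R-minimizers} from the fundamental theorem of \(\Gamma\)-convergence (Appendix~\ref{fCnmub}, Theorem~\ref{K87zb7}), with \(\Gamma\)-convergence supplied by Theorem~\ref{thm:all-together-gamma} and equicoercivity by Theorem~\ref{thm:h-k-R-equicoercive}. Because the weak topology of \(\Z\) is metrizable only on bounded sets, I will first place every object in a fixed bounded set. The argument then follows the structure of Corollaries~\ref{auBXLU} and~\ref{cor:minimizers-penalty}.

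\emph{Step 1: uniform energy bound.} Let \(m:=\min_{\Z}\J_\Cth<+\infty\), and pick any \(\bar z\in\E\cap\Cth\) with \(\J_\Cth(\bar z)<+\infty\). The admissible diagonal \eqref{eq:admissible-diagonal-recovery} gives a recovery sequence \(\bar z_j\in\E_{k_j}\) with \(\limsup_j\widehat\J_j(\bar z_j)\le\J_\Cth(\bar z)\). Hence
\[
\limsup_{j\to\infty}\inf_{\Z}\widehat\J_j\le \J_\Cth(\bar z).
\]
Taking the infimum over such \(\bar z\) yields \(\limsup_j\inf\widehat\J_j\le m\). Combined with \eqref{eq:h-k-R-approx-min}, this gives \(\sup_j\widehat\J_j(z_j)\le C<\infty\) for all large \(j\).

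\emph{Step 2: compactness.} By Theorem~\ref{thm:h-k-R-equicoercive}, the sequence \((z_j)\) lies in a bounded subset of \(\Z\). This set is relatively weakly sequentially compact and the weak topology is metrizable on it, so the sequential \(\Gamma\)-inequalities of Theorem~\ref{thm:all-together-gamma} are available there.

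\emph{Step 3: identification of cluster points.} Let \(z_{j_i}\rightharpoonup z_\ast\). The \(\Gamma\)-liminf inequality, applied along this subsequence, gives
\[
\J_\Cth(z_\ast)\le\liminf_i\widehat\J_{j_i}(z_{j_i})\le\liminf_i\inf\widehat\J_{j_i}\le m .
\]
Two remarks justify this chain. First, the liminf inequality holds along subsequences: its proof only uses the admissible-diagonal data and \eqref{eq:discrete-penalty-liminf}, and both are stable under passing to subsequences. Second, the middle inequality uses \(\varepsilon_j\to0\). Since \(\J_\Cth(z_\ast)\le m<\infty\), we get \(z_\ast\in\E\cap\Cth\) and \(z_\ast\) is a minimizer of \(\J_\Cth\).

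\emph{Step 4: convergence of values.} The chain in Step~3 also gives \(\liminf\inf\widehat\J_j\ge m\) along every subsequence. To see this, take any subsequence realizing the liminf and extract a weakly convergent further subsequence, which Step~2 makes possible. Together with Step~1, this yields \(\lim_j\inf_\Z\widehat\J_j=m\).

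\emph{Step 5: uniqueness.} If the minimizer is unique, every subsequence of \((z_j)\) has a further subsequence converging weakly to the same limit \(z_\ast\). By the Urysohn subsequence principle, the whole sequence then converges, \(z_j\rightharpoonup z_\ast\).

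The main obstacle is Step~1. The uniform energy bound is needed before equicoercivity can be used, and the only source for it is the recovery sequence, which exists only because the diagonal is admissible. This is precisely where the condition \(R_j\mathcal P(z_j)\to0\) in \eqref{eq:admissible-diagonal-recovery} enters.
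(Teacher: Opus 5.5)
Your proposal is correct and follows the paper's route. The paper simply invokes the fundamental theorem of $\Gamma$-convergence (Theorem~\ref{K87zb7}), with $\Gamma$-convergence supplied by Theorem~\ref{thm:all-together-gamma} and equicoercivity by Theorem~\ref{thm:h-k-R-equicoercive}; your Steps 1--5 unpack that argument, and your Step 1 makes explicit the uniform energy bound (obtained from the admissible recovery sequence) that the appendix proof leaves implicit.
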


\begin{proof}
By Theorem~\ref{thm:all-together-gamma},
\[
        \widehat{\J}_j
        \quad
        \Gamma\hbox{-converges to}
        \quad
        \J_{\Cth}
\]
with respect to the weak topology of \(\Z\). By Theorem~\ref{thm:h-k-R-equicoercive}, the family \((\widehat{\J}_j)\) is equicoercive. The conclusion is therefore the standard compactness and convergence theorem for quasi-minimizers under \(\Gamma\)-convergence, applied in the weak topology of \(\Z\); see Theorem~\ref{K87zb7}. The inclusion \(z_\ast\in\E\cap\Cth\) also follows directly from the liminf argument in Theorem~\ref{thm:all-together-gamma}.
\end{proof}

\subsection{Optimal diagonal strategies}

The qualitative theorem above only requires
\[
        h_j\to\infty,
        \quad
        k_j\to\infty,
        \quad
        R_j\to+\infty,
\]
together with the admissibility condition \eqref{eq:admissible-diagonal-recovery}. In computation, however, these three limits need to be chosen in a balanced way.

Let \(a_h\downarrow0\) denote the material-data error scale. In the setting of Lemma~\ref{7YAxr2}, in which the exact data set corresponds to Darcy's law, we may take
\[
        a_h:=|\Omega|^{1/2}(\rho_h+t_h).
\]
Let \(b_k\downarrow0\) denote the finite-element consistency scale for \(\E_k\to\E\). Finally, let \(p_k\downarrow0\) denote the thermodynamic recovery leakage, meaning that, on bounded subsets of \(\E\cap\Cth\), one can choose recovery states \(z_k\in\E_k\) with
\[
        z_k\to z\quad\text{strongly in }\Z,
        \quad
        \mathcal P(z_k)\lesssim p_k.
\]
Then the recovery error for the fully approximate functional has the form
\[
        \J_{h,k,R}(z_k)-\J_{\Cth}(z)
        \lesssim
        a_h+b_k+R\,p_k.
\]
At the same time, the penalty parameter controls the residual violation of the thermodynamic inequality on the scale \(R^{-1}\): bounded energy implies
\[
        \mathcal P(z_{h,k,R})\lesssim R^{-1}
\]
for near minimizers, up to the value of a constrained competitor.

Thus a balanced diagonal should satisfy
\[
        a_h\to0,
        \quad
        b_k\to0,
        \quad
        R\to+\infty,
        \quad
        R\,p_k\to0.
\]
A practical rule is to choose \(R\) so that the penalty tolerance is no
larger than the material-data error,
\[
        R(h)\simeq a_h^{-1},
\]
and then choose the coarsest spatial discretization satisfying
\[
        b_{k(h)}\lesssim a_h,
        \quad
        p_{k(h)}\lesssim a_h^2.
\]
The first condition prevents the finite-element error from dominating the
data error. The second condition prevents the amplified penalty leakage
\(R(h)p_{k(h)}\) from dominating the total error.

If the discretization is thermodynamically structure-preserving, so that
one can recover constrained states with \(p_k=0\), the additional penalty
restriction disappears and the optimal rule reduces to the same balance as
in the unconstrained case:
\[
        b_{k(h)}\lesssim a_h,
        \quad
        R(h)\to+\infty,
        \quad
        R(h)^{-1}\lesssim a_h.
\]
In that case, \(R(h)\simeq a_h^{-1}\) and \(b_{k(h)}\simeq a_h\) are
natural choices.

\begin{example}[Algebraic spatial rates with penalty leakage] {\rm
Assume that
\[
        b_k\simeq \Delta_k^p,
        \quad
        p_k\simeq \Delta_k^r,
\]
where \(\Delta_k\) is the mesh size, \(p>0\) is the finite-element
consistency order, and \(r>0\) is the thermodynamic recovery order. With
\(a_h:=|\Omega|^{1/2}(\rho_h+t_h)\), the balancing choice
\[
        R(h)\simeq a_h^{-1}
\]
leads to
\[
        \Delta_{k(h)}^p\lesssim a_h,
        \quad
        \Delta_{k(h)}^r\lesssim a_h^2.
\]
Equivalently,
\[
        \Delta_{k(h)}
        \lesssim
        \min\left\{
        a_h^{1/p},
        a_h^{2/r}
        \right\}.
\]
If the number of degrees of freedom satisfies \(N_k\simeq \Delta_k^{-N}\), then
\[
        N_{k(h)}
        \gtrsim
        \max\left\{
        a_h^{-N/p},
        a_h^{-2N/r}
        \right\}.
\]
Thus the optimal computable diagonal is the coarsest mesh satisfying both the usual finite-element accuracy requirement and the stronger requirement created by amplification of thermodynamic leakage by the penalty parameter. If \(p_k=0\), this reduces to the unconstrained degree-of-freedom scaling
\[
        N_{k(h)}\gtrsim a_h^{-N/p} ,
\]
see eq.~(\ref{9cJTrB}).}\hfill\(\square\)
\end{example}

\input{Example}

\section{Conclusions}

In this work, we have developed a DDCM framework for porous media flow that explicitly accounts for the second law of thermodynamics. The primary technical contribution is the proof that, although the set of local thermodynamically admissible states $\mathcal{C}$ is non-convex and lacks weak closedness in $L^2(\Omega;\mathbb{R}^N) \times L^2(\Omega;\mathbb{R}^N)$, the combined feasible set $\mathcal{E} \cap \mathcal{C}$---which couples physical differential constraints (equilibrium and compatibility) with thermodynamic dissipation---is weakly sequentially closed. This fundamental property, proved via a compensated compactness argument (the div-curl lemma), guarantees the existence of thermodynamically consistent solutions for continuous DDCM problems.

When trying to design a standard Lagrange multiplier approach to impose the thermodynamic inequality constraint, the space of Lagrange multipliers is not compact. By reframing the problem as a zero-sum game with bounded multipliers, we have shown that exact constraint enforcement is equivalent to a positive-part penalty formulation. We proved that the penalized functionals $\mathcal{J}_R$ are weakly lower semicontinuous, equicoercive, and $\Gamma$-converge to the strictly constrained functional $\mathcal{J}_{\mathcal{C}}$ as $R \to +\infty$. Moreover, when combining data-set approximation, space approximation and penalization, we provided precise balancing rules for spatial finite-element mesh sizes, material data scales, and penalty parameters to achieve optimal asymptotic error convergence.

A numerical test case involving a manufactured solution and perturbed material data confirms the theoretical assertions. The penalty formulation effectively eliminates local thermodynamic admissibility violations ($e \cdot s > 0$) as $R$ grows, recovering smooth, physical gradient and flux fields even when the provided material data sets are severely corrupted by artificial noise or localized non-physical artifacts. The developed framework offers a robust blueprint for incorporating thermodynamic constraints into broader data-driven computational paradigms for complex multi-physics coupled continuum systems.

\section*{Acknowledgments}

RC gratefully acknowledges support from the ICREA Acad\`emia programme of the Generalitat de Catalunya.

CGG gratefully acknowledges the financial support from the European Research Council through the ERC Consolidator Grant “DATA-DRIVEN OFFSHORE” (Project ID 101083157).

MO gratefully acknowledges the financial support of the \emph{Centre Internacional de M\`etodes Num\`erics a l'Enginyeria} (CIMNE) of the \emph{Universitat Polit\`ecnica de Catalunya} (UPC), Spain, through the \emph{UNESCO Chair in Numerical Methods in Engineering}.

\begin{appendix} 

\section{Basic facts of the calculus of variations} \label{fCnmub} 

For completeness, we recall the basic direct-method tools used in the paper.  

We begin with existence of minimizers. Let \((X,d)\) be a metric space and let
\[
        F:X\to \mathbb R\cup\{+\infty\}.
\]

\begin{definition}[Lower semicontinuity]
The functional \(F\) is lower semicontinuous at \(x\in X\) if, for every
sequence \(x_j\to x\) in \(X\),
\[
        F(x)\le \liminf_{j\to\infty}F(x_j).
\]
It is lower semicontinuous on \(X\) if it is lower semicontinuous at every
\(x\in X\).
\end{definition}

\begin{definition}[Coercivity]
The functional \(F\) is coercive if its sublevel sets are bounded; equivalently,
for every \(C<+\infty\),
\[
        \{x\in X:F(x)\le C\}
\]
is bounded in \(X\).  In a normed space, this is equivalent to
\[
        \|x\|\to+\infty
        \quad\Longrightarrow\quad
        F(x)\to+\infty .
\]
\end{definition}

\begin{theorem}[Tonelli's existence theorem] \label{TepXTQ}
Let \(X\) be a reflexive Banach space, and let
\[
        F:X\to \mathbb R\cup\{+\infty\}
\]
be proper, bounded from below, coercive, and sequentially weakly lower semicontinuous.  Then \(F\) attains its minimum: there exists \(x_\ast\in X\) such that
\[
        F(x_\ast)=\inf_{x\in X}F(x).
\]
\end{theorem}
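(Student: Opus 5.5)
This is the direct method of the calculus of variations. Set \(m:=\inf_{x\in X}F(x)\). Because \(F\) is proper, \(m<+\infty\); because \(F\) is bounded from below, \(m>-\infty\). So \(m\) is a finite real number. I will take a minimizing sequence \((x_j)\subset X\) with \(F(x_j)\to m\). Dropping finitely many terms, I can assume \(F(x_j)\le m+1\) for every \(j\). The whole sequence then lies in the sublevel set \(\{x\in X:F(x)\le m+1\}\), which is bounded in \(X\) by coercivity.

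Next comes compactness. \(X\) is a reflexive Banach space, so every bounded sequence has a weakly convergent subsequence; this is the Eberlein--\v{S}mulian theorem, or Kakutani's characterization combined with it, see Brezis~\cite[Ch.~3]{Brezis2011}. I therefore extract a subsequence, not relabeled, with \(x_j\rightharpoonup x_\ast\) weakly in \(X\). In the applications of the paper \(X=\Z\) is a separable Hilbert space, and there one can argue more directly: the weak topology is metrizable on bounded sets, and a diagonal argument over a countable dense set yields the subsequence.

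Finally I conclude by sequential weak lower semicontinuity:
\[
        F(x_\ast)\le\liminf_{j\to\infty}F(x_j)=m .
\]
Since \(m\) is the infimum, also \(F(x_\ast)\ge m\). Hence \(F(x_\ast)=m\), and \(x_\ast\) is a minimizer; in particular \(F(x_\ast)<+\infty\).

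The only step that is not immediate is extracting a weakly convergent subsequence from a bounded sequence in a general (possibly nonseparable) reflexive space. I will not reprove this but quote it: pass to the closed linear span of \(\{x_j\}\), which is separable and reflexive, and apply Banach--Alaoglu there. One point to keep straight is that lower semicontinuity must be sequential and with respect to the weak topology. This is exactly the form in which it is verified elsewhere in the paper, for example in Theorem~\ref{thm:penalty-wlsc}, so there is no mismatch between the topologies.
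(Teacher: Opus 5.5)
Your proof is correct and follows the same direct-method argument as the paper. A minimizing sequence is bounded by coercivity, it has a weakly convergent subsequence by reflexivity, and its weak limit is a minimizer by sequential weak lower semicontinuity. You are in fact slightly more careful than the paper: it does not note that $\inf_X F$ is finite (by properness and the lower bound), nor that the minimizing sequence should be placed in a single sublevel set before coercivity is invoked.
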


\begin{proof}
Let \((x_j)\subset X\) be a minimizing sequence, so that
\[
        F(x_j)\to \inf_X F .
\]
Since \(F\) is coercive, \((x_j)\) is bounded in \(X\).  By reflexivity, there
exist a subsequence, not relabeled, and \(x_\ast\in X\) such that
\[
        x_j\rightharpoonup x_\ast
        \quad\text{weakly in }X .
\]
By sequential weak lower semicontinuity,
\[
        F(x_\ast)
        \le
        \liminf_{j\to\infty}F(x_j)
        =
        \inf_X F .
\]
Hence \(F(x_\ast)=\inf_X F\), and \(x_\ast\) is a minimizer.
\end{proof}

$\Gamma$-convergence is a variational form of convergence of functionals which, together with equicoercivity, ensures convergence of minimizers. We record basic definitions and facts used in the text for completeness. Further details may be consulted in \cite{DalMaso1993, Braides2006}.

\begin{definition}[\(\Gamma\)-convergence \cite{DalMaso1993, Braides2006}]
Let \((X,\tau)\) be a topological space and let \(F_h:X\to\R\cup\{+\infty\}\).
We say that \(F=\Gamma(\tau)\!-\!\lim_{h\to\infty}F_h\) if
\[
        F(x)
        =
        \sup_{U\in\mathcal N(x)}
        \liminf_{h\to\infty}\inf_{y\in U}F_h(y)
        =
        \sup_{U\in\mathcal N(x)}
        \limsup_{h\to\infty}\inf_{y\in U}F_h(y),
\]
where \(\mathcal N(x)\) is the family of all open neighborhoods of \(x\).
\end{definition}

On metric spaces, we have the following sequential characterization.

\begin{lemma}[Sequential characterization on bounded sets {\cite{CMO2018}}] \label{Sur2ng}
Let \((X,\tau)\) be a topological space. Assume that there exists a metric \(d_X\) on \(X\) such that, on bounded sets, the topology induced by \(d_X\) agrees with \(\tau\). Assume also that
\[
        X_C:=\{x\in X:F_h(x)<C\text{ for some }h\}
\]
is bounded for every \(C\in\R\). Then, \(F=\Gamma(\tau)\!-\!\lim F_h\) is
equivalent to the two sequential conditions:
\[
        F(x)\le \liminf_{h\to\infty}F_h(x_h)
        \quad\text{whenever }x_h\to x,
\]
and, for every \(x\in X\), there exists \(x_h\to x\) such that
\[
        \limsup_{h\to\infty}F_h(x_h)\le F(x).
\]
\end{lemma}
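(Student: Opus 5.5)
The idea is to deduce the two sequential conditions from the topological definition of \(\Gamma\)-convergence, and conversely. The key facts are that the relevant points all lie in a bounded set, where \(\tau\) is metrized by \(d_X\), and that sequences attaining the relevant infima lie in the bounded sets \(X_C\). Throughout, write \(\Gamma\)-\(\liminf\) and \(\Gamma\)-\(\limsup\) for the lower and upper neighborhood expressions in the definition. Convergence \(x_h\to x\) is understood in \(\tau\); on bounded sets this is the same as \(d_X\)-convergence.

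\emph{Liminf inequality from the definition.} Let \(x_h\to x\). If \(\liminf_h F_h(x_h)=+\infty\) there is nothing to show. Otherwise we pass to a subsequence realizing the liminf with \(F_h(x_h)<C\), so the \(x_h\) lie in \(X_C\) and hence in a bounded set. For every \(U\in\mathcal N(x)\) we have \(x_h\in U\) eventually, so \(\inf_U F_h\le F_h(x_h)\) for large \(h\). Taking liminf and then sup over \(U\) gives \(F(x)\le\liminf F_h(x_h)\).

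\emph{Recovery sequence from the definition.} This is the main step. Assume \(F(x)<+\infty\) and fix \(C>F(x)\). Let \(B\) be a bounded set containing \(X_C\cup\{x\}\); on \(B\) the topology \(\tau\) coincides with that of \(d_X\). Using the balls \(U_m=\{y\in B: d_X(y,x)<1/m\}\), relatively open in \(B\) and extended to \(\tau\)-neighborhoods (points outside \(B\) are irrelevant since there \(F_h\ge C\)), we get
\[
\limsup_h \inf_{U_m}F_h\le F(x).
\]
For each \(m\) choose \(h_m\) increasing such that, for \(h\ge h_m\), some \(y\in U_m\) satisfies \(F_h(y)\le F(x)+1/m\). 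Then define \(x_h\) to be such a \(y\) for \(h_m\le h<h_{m+1}\). This diagonal construction gives \(d_X(x_h,x)\to0\), hence \(x_h\to x\), and \(\limsup F_h(x_h)\le F(x)\). If \(F(x)=+\infty\), take \(x_h=x\).

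\emph{Converse.} Assume the two sequential conditions. The recovery sequence gives \(\limsup_h\inf_U F_h\le\limsup_h F_h(x_h)\le F(x)\) for every \(U\), so the \(\Gamma\)-\(\limsup\) is at most \(F(x)\). For the reverse bound on the \(\Gamma\)-\(\liminf\), suppose it is strictly less than some \(t<F(x)\). Then for every \(U\) there are infinitely many \(h\) and points \(y\in U\) with \(F_h(y)<t\); these lie in \(X_t\), which is bounded. Using the metric balls again, a diagonal choice produces a subsequence \(h_m\) and points \(y_m\to x\) with \(F_{h_m}(y_m)<t\). Filling the remaining indices with \(x\) itself, or applying the liminf condition along the subsequence (which holds equally for subsequences by the same argument), contradicts \(F(x)\le\liminf\). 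Hence the lower and upper expressions coincide and both equal \(F(x)\).

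The delicate point is the passage between \(\tau\)-neighborhoods and metric balls. It is legitimate only because every relevant point lies in the bounded set \(X_C\).
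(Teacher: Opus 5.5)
Your proof is correct. The paper does not prove this lemma; it quotes it from \cite{CMO2018}, so there is no proof of its own to compare against. Your argument is the standard reduction to the metrizable case.

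The liminf inequality follows directly from the neighborhood definition. For the recovery sequence and for the converse, the uniform bound on $X_C$ is exactly what confines every relevant point to one bounded set $B\supseteq X_C\cup\{x\}$. On $B$, $\tau$ is metrized by $d_X$, so arbitrary $\tau$-neighborhoods can be replaced by traces $V_m\cap B$ of metric balls. Points of $V_m\setminus B$ satisfy $F_h\ge C$, so they never compete for the infimum once it drops below $C$. A diagonal sequence can then be extracted.

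Two small remarks.
\begin{itemize}
\item If $F(x)=-\infty$ is allowed, replace $F(x)+1/m$ by $\max\{F(x),-m\}+1/m$ in your diagonal construction. This case is irrelevant for the nonnegative functionals used in the paper.
\item In the converse, the filling argument alone already finishes the proof: set $x_h=y_m$ for $h=h_m$ and $x_h=x$ otherwise, which gives $\liminf_h F_h(x_h)\le\liminf_m F_{h_m}(y_m)\le t<F(x)$. The alternative you mention, applying the liminf condition along a subsequence, is therefore unnecessary.
\end{itemize}
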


Mosco convergence combines weak and strong topologies in a manner that is ideally suited to characterize convergence with respect to data when the limiting data set is weakly sequentially lower semicontinuous. We recall that, in particular, every strongly closed convex set is weakly sequentially closed. 

\begin{definition}[Mosco convergence of functionals {\cite{Mosco1969, CMO2018}}] \label{ufxehj}
Let \(X\) be a Banach space. A sequence \(F_h:X\to\R\cup\{+\infty\}\) converges to \(F:X\to\R\cup\{+\infty\}\) in the sense of Mosco, written
\[
        F=M\hbox{-}\lim_{h\to\infty}F_h,
\]
if:
\begin{enumerate}
\item for every sequence \(x_h\rightharpoonup x\) weakly in \(X\),
\[
        \liminf_{h\to\infty}F_h(x_h)\ge F(x);
\]
\item for every \(x\in X\), there exists \(x_h\to x\) strongly in \(X\) such
that
\[
        \lim_{h\to\infty}F_h(x_h)=F(x).
\]
\end{enumerate}
\end{definition}

\begin{definition}[Mosco convergence of sets {\cite{Mosco1969, CMO2018}}] \label{XjU2TR}
A sequence of subsets \(\E_h\subset X\) converges to \(\E\subset X\) in the sense of Mosco, written
\[
        \E=M\hbox{-}\lim_{h\to\infty}\E_h,
\]
if
\[
        \I_\E=M\hbox{-}\lim_{h\to\infty} \I_{\E_h}.
\]
\end{definition}

\begin{lemma}[Weak closedness of Mosco limits {\cite{CMO2018}}]
Every Mosco limit functional is weakly sequentially lower semicontinuous.
In particular, every Mosco limit set is weakly sequentially closed.
Moreover, \(\E=M\hbox{-}\lim \E\) if and only if \(\E\) is weakly sequentially
closed.
\end{lemma}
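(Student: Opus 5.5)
The statement consists of three claims: every Mosco limit functional is weakly sequentially lower semicontinuous, every Mosco limit set is weakly sequentially closed, and $\E=M\hbox{-}\lim\E$ holds exactly when $\E$ is weakly sequentially closed. I will argue directly from Definitions~\ref{ufxehj} and~\ref{XjU2TR}, using only sequential notions, so no metrizability is needed.

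\emph{Lower semicontinuity.} Let $F=M\hbox{-}\lim F_h$ and let $x_j\rightharpoonup x$ in $X$. The constant sequence $y_h:=x_j$ converges weakly to $x_j$, so condition (1) gives $F(x_j)\le\liminf_h F_h(x_j)$. Condition (2) gives, for each $j$, a strong recovery sequence at $x_j$; this is where any real content would have to come in.

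The most efficient route is to avoid recovery sequences altogether and build a diagonal weakly convergent sequence for the liminf inequality. Choose $h_1<h_2<\cdots$ and points $y_{h}$ defined as follows. For $h_j\le h<h_{j+1}$ set $y_h:=w^{(j)}_h$, where $w^{(j)}$ is the strong recovery sequence at $x_j$. Pick $h_{j+1}$ so large that $F_h(w^{(j)}_h)\le F(x_j)+1/j$ and $\|w^{(j)}_h-x_j\|\le1/j$ for all $h\ge h_{j+1}$.

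I must be careful with the index shift. On the block $[h_j,h_{j+1})$ the sequence uses $w^{(j)}$, and I need $h_j$ to be large enough for $w^{(j)}$ to be close to $x_j$ and nearly optimal. So I select $h_j$ inductively as the threshold for $w^{(j)}$ and also $h_j>h_{j-1}$.

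With this choice $y_h-x_{j(h)}\to0$ strongly and $x_{j(h)}\rightharpoonup x$, hence $y_h\rightharpoonup x$. Condition (1) then gives
\[
F(x)\le\liminf_h F_h(y_h)\le\liminf_j\bigl(F(x_j)+1/j\bigr)=\liminf_j F(x_j),
\]
after passing to a subsequence of $(x_j)$ realizing the liminf. If $F(x_j)=+\infty$ along that subsequence there is nothing to prove. This diagonal bookkeeping is the main obstacle: the convergence must be genuinely weak along the whole sequence indexed by $h$, and taking $j(h)$ as the block index handles it.

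\emph{Closedness of Mosco limit sets.} Apply the first claim to $F=\I_\E$, $F_h=\I_{\E_h}$. An indicator is weakly sequentially lsc if and only if its set is weakly sequentially closed.

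\emph{The equivalence.} If $\E=M\hbox{-}\lim\E$, then $\E$ is weakly closed by the previous step. Conversely, suppose $\E$ is weakly sequentially closed and take the constant sequence $\E_h=\E$. Condition (1) holds because $x_h\in\E$ with $x_h\rightharpoonup x$ forces $x\in\E$, and otherwise the liminf is $+\infty$. Condition (2) holds with the constant recovery sequence $x_h=x$, which gives $\lim\I_\E(x)=\I_\E(x)$.
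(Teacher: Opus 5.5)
Your proof is correct. The block-diagonal sequence you build from the recovery sequences $w^{(j)}$ converges weakly to $x$ and gives $F(x)\le\liminf_j F(x_j)$, once the threshold for $w^{(j)}$ is tied to $h_j$ rather than $h_{j+1}$, as you eventually fix it. The indicator argument and the constant-sequence argument then settle the other two claims.

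The paper states this lemma without proof, citing \cite{CMO2018}, and your diagonal argument is the standard one. Two small corrections: your remark about ``avoiding recovery sequences altogether'' is inaccurate, since the construction rests on condition (2). In the converse direction of the equivalence, you should pass to the subsequence of indices with $x_h\in\E$ before invoking weak sequential closedness.
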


We recall the basic compactness consequence of equicoercivity and \(\Gamma\)-convergence in metric spaces.  

\begin{definition}[Equicoercivity in metric spaces]
Let \((X,d)\) be a metric space and let \(F_h:X\to\mathbb R\cup\{+\infty\}\). The family \((F_h)\) is said to be equicoercive if, for every \(C<+\infty\), the union of sublevel sets
\[
        \bigcup_h \{x\in X:F_h(x)\le C\}
\]
is relatively compact in \(X\).  Equivalently, every sequence
\((x_h)\subset X\) satisfying
\[
        \sup_h F_h(x_h)<+\infty
\]
has a subsequence converging in the metric \(d\).
\end{definition}

\begin{theorem}[Convergence of minimizers and quasi-minimizers] \label{K87zb7}
Assume that \(F_h\) \(\Gamma\)-converges to \(F\) with respect to the metric topology of \(X\), and that the family \((F_h)\) is equicoercive.  Let
\((x_h)\subset X\) satisfy
\[
        F_h(x_h)\le \inf_X F_h+\varepsilon_h,
        \quad
        \varepsilon_h\to0 .
\]
Then \((x_h)\) is relatively compact in \(X\).  Every cluster point of
\((x_h)\) is a minimizer of \(F\).  Moreover,
\[
        \lim_{h\to\infty}\inf_X F_h=\min_X F,
\]
provided the right-hand side is finite.  In particular, if \(F\) has a unique
minimizer \(x\), then
\[
        x_h\to x
        \quad\text{in }X .
\]
\end{theorem}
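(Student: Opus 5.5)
The argument is the standard one for quasi-minimizers under $\Gamma$-convergence, run directly as sequences in the metric $d$. I will not invoke the topological definition of $\Gamma$-convergence; I will use its two sequential consequences (liminf inequality and recovery sequences), which in a metric space are equivalent to it. Set $m:=\min_X F$, assumed finite, and note that $F$ is $\Gamma$-limit hence lower semicontinuous.

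\emph{Step 1: upper bound on the values.} Since $m$ is finite, pick a minimizer candidate or, more simply, any $x\in X$ with $F(x)<m+\delta$. By the recovery property there is $y_h\to x$ with $\limsup_h F_h(y_h)\le F(x)$. Then
\[
\limsup_{h\to\infty}\inf_X F_h\le \limsup_{h\to\infty}F_h(y_h)\le m+\delta .
\]
Letting $\delta\downarrow0$ gives $\limsup_h \inf_X F_h\le m$. Consequently $F_h(x_h)\le \inf_X F_h+\varepsilon_h$ is bounded above for $h$ large, so $\sup_h F_h(x_h)<+\infty$ after discarding finitely many indices. Equicoercivity then says $(x_h)$ lies in a relatively compact set, which gives the first claim.

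\emph{Step 2: cluster points.} Let $x_{h_k}\to x_\ast$ along a subsequence. To apply the liminf inequality, which is formulated for full sequences indexed by $h$, I use the standard device of extending the subsequence: set $\tilde x_h:=x_{h_k}$ for $h_k\le h<h_{k+1}$. This extension only needs to enter the liminf comparison, and it is cleaner to argue directly. The liminf inequality of $\Gamma$-convergence holds along subsequences, because the $\Gamma$-limit of a subsequence coincides with the $\Gamma$-limit of the full sequence; this follows from the $\sup_U\liminf$ and $\sup_U\limsup$ formulas sandwiching the subsequence values. Hence
\[
F(x_\ast)\le\liminf_k F_{h_k}(x_{h_k})\le \liminf_k \bigl(\inf_X F_{h_k}+\varepsilon_{h_k}\bigr)\le \limsup_h\inf_X F_h\le m .
\]
So $x_\ast$ is a minimizer. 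Applying the same chain along a subsequence that realizes $\liminf_h \inf_X F_h$, with compactness providing a further convergent subsequence, gives $m\le\liminf_h\inf_X F_h$. Together with Step 1 this yields $\lim_h\inf_X F_h=m$.

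\emph{Step 3: uniqueness.} If $F$ has a unique minimizer $x$, suppose $x_h\not\to x$. Then some subsequence stays at distance at least $\eta>0$ from $x$. By relative compactness it has a further convergent subsequence, whose limit is a minimizer different from $x$, a contradiction.

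The only delicate point is the passage to subsequences in the liminf inequality. I handle it with the sandwich argument from the neighborhood definition: for each neighborhood $U$ of $x_\ast$, eventually $x_{h_k}\in U$, so $\inf_U F_{h_k}\le F_{h_k}(x_{h_k})$, and $F(x_\ast)=\sup_U\liminf_h\inf_U F_h\le\sup_U\liminf_k\inf_U F_{h_k}$.
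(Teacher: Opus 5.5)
Your proof is correct and follows essentially the same argument as the paper. It applies the liminf inequality on a convergent subsequence, uses recovery sequences to get $\limsup_h \inf_X F_h \le \inf_X F$, closes the chain with quasi-minimality, and uses uniqueness plus relative compactness to get convergence of the full sequence. It is more careful than the paper's proof on three points the paper passes over:
\begin{enumerate}
\item You derive the uniform bound on $F_h(x_h)$ that equicoercivity needs from finiteness of $\inf_X F$; this assumption is genuinely needed for the compactness claim.
\item You justify the liminf inequality along subsequences.
\item You obtain $\liminf_h \inf_X F_h \ge \inf_X F$ from a subsequence realizing the liminf.
\end{enumerate}
You should write $m:=\inf_X F$ rather than $\min_X F$ until attainment has been shown.
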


\begin{proof}
Equicoercivity gives relative compactness of \((x_h)\) whenever the energies
\(F_h(x_h)\) are uniformly bounded.  Let \(x_{h_j}\to x\) be a convergent
subsequence.  By the \(\Gamma\)-liminf inequality,
\[
        F(x)\le \liminf_{j\to\infty}F_{h_j}(x_{h_j}).
\]
On the other hand, for every \(y\in X\), the \(\Gamma\)-limsup inequality gives
a recovery sequence \(y_h\to y\) such that
\[
        \limsup_{h\to\infty}F_h(y_h)\le F(y).
\]
Therefore
\[
        \limsup_{h\to\infty}\inf_X F_h\le F(y).
\]
Taking the infimum over \(y\in X\) yields
\[
        \limsup_{h\to\infty}\inf_X F_h\le \inf_X F .
\]
Combining this with quasi-minimality and the \(\Gamma\)-liminf inequality gives
\[
        F(x)
        \le
        \liminf_{j\to\infty}F_{h_j}(x_{h_j})
        \le
        \limsup_{j\to\infty}\inf_X F_{h_j}
        \le
        \inf_X F .
\]
Hence \(x\) minimizes \(F\), and the minimum values converge.  If the minimizer
of \(F\) is unique, all convergent subsequences have the same limit, and
relative compactness implies convergence of the full sequence.
\end{proof}

\end{appendix}

\bibliographystyle{plain}

\bibliography{bibliography}

\end{document}

%% file: Example.tex
\section{Numerical example}

In this section, we assess the effectiveness of the proposed penalty-based data-driven formulation through a manufactured-solution test. The objective is to verify the implementation and investigate the ability of the penalty formulation to recover thermodynamically admissible solutions. We consider the setting of Section~\ref{sec:single-el}, in which the data set consists of a single element \((\tilde e, \tilde s) \in L^2(\Omega;\R^N) \times L^2(\Omega;\R^N)\)  which may be thermodynamically inconsistent.

Let \(\lbrace\mathcal T_h\rbrace_{h>0}\) be a family of conforming, quasi-uniform, shape-regular finite element meshes of the computational domain \(\Omega\), with characteristic mesh size \(h\). Let \(\mathbb{P}_k(K)\) denote the space of polynomials of degree \(k\) on an element \(K \in\mathcal T_h\). Note that in this section $h$ refers to the mesh size and $k$ to the polynomial order, as it is usual in the finite element literature, whereas in the rest of the paper $k\to \infty$ denoted the space approximation parameter and $h \to \infty$ the parameter controlling the approximation of the data set. 

We define the finite-element spaces associated to \(\mathcal T_h\) by
\[
\text{CG}_k(\mathcal T_h) = \left\lbrace v \in C^0(\Omega) : v|_K\in\mathbb{P}_k(K)~\forall~K\in\mathcal T_h\right\rbrace
\]
and
\[
\text{DG}_k(\mathcal T_h) = \left\lbrace v \in L^2(\Omega): v|_K\in\mathbb{P}_k(K)~\forall~K\in\mathcal T_h \right\rbrace.
\]
The continuous product space is
\[
Y := H^1_0(\Omega)\times  L^2(\Omega;\R^N )\times L^2(\Omega;\R^N)\times H^1_0(\Omega)\times L^2(\Omega;\R^N),
\]
where we seek 
\[
(u, e, s, \lambda, \mu) \in Y.
\]
We approximate this continuous product space by the finite-dimensional product space
\[Y_h :=
\left(\text{CG}_{k+1} \times [\text{DG}_k]^N \times [\text{DG}_k]^N \times \text{CG}_{k+1} \times [\text{DG}_k]^N\right) \cap Y,  \label{eq:fe-spaces}
\]
where we seek 
\[
(u_h, e_h, s_h, \lambda_h, \mu_h) \in Y_h. 
\]
The choice of the finite element spaces \eqref{eq:fe-spaces} trivially satisfies the inf-sup conditions between the spaces of $u_h$, $\lambda_h$ and those of $e_h$, $s_h$, $\mu_h$, thus resulting in the discrete counterpart of the stability estimate~\eqref{eq:inf-sup-five}. For the possibility of using other finite element interpolations, see~\cite{BGBA-2025,CABG-2026}.

\subsection{Manufactured solution}
The first step is to construct a manufactured solution that serves as a reference and that is thermodynamically consistent by design. To this end, we consider a two-dimensional unit square, \(\Omega = (0,L)^2\) with \(L = 1\) and coordinates $(x,y)$, a unit reaction coefficient, \(\zeta = 1\), and a unit consistency parameter, \(\k = 1\). Nevertheless, to retain generality and improve readability, the analytical expressions are presented in terms of the parameters \(\zeta\) and \(\k\) rather than their numerical values.
We prescribe a hydraulic potential that satisfies homogeneous Dirichlet boundary conditions exactly. The corresponding gradient field is computed directly and the flux is then obtained through the constitutive relation \(s = -\k e\). The remaining fields are determined from the optimality conditions associated with functional $\mathcal{K}$ in \eqref{eq:5field-f}, i.e.,
\[
\inf_{(u,e,s)}\sup_{(\lambda, \mu)} \mathcal{K}(u,e,s,\lambda,\mu),
\]
where
$$\mathcal{K} (u,e,s,\lambda,\mu)= \frac{\k}{2}\|e-\tilde e\|^2+\frac{1}{2\k}\|s-\tilde s\|^2- (\nabla \lambda, s)+ ( \lambda, \zeta u-q) + ( \mu, e-\nabla u),$$
and listed as problem~\eqref{eq:five-field}.

The hydraulic potential is prescribed as
\begin{align}
u(x,y) = U_0\sin\left(\frac{\pi x}{L}\right)\sin\left(\frac{\pi y}{L}\right).\label{eq:u-manu}
\end{align}
The corresponding gradient field is
\[
e(x,y)=\nabla u(x,y)
=
\frac{U_0 \pi}{L}
\begin{pmatrix}
\cos\left(\frac{\pi x}{L}\right)\sin\left(\frac{\pi y}{L}\right)\\[0.5em]
\sin\left(\frac{\pi x}{L}\right)\cos\left(\frac{\pi y}{L}\right)
\end{pmatrix}.
\]
The flux field is obtained from Darcy's  constitutive relation as
\[
s(x,y)
= -\k e(x,y) =
-\frac{\k U_0\pi}{L}
\begin{pmatrix}
\cos\left(\frac{\pi x}{L}\right)\sin\left(\frac{\pi y}{L}\right)\\[0.5em]
\sin\left(\frac{\pi x}{L}\right)\cos\left(\frac{\pi y}{L}\right)
\end{pmatrix}
\]
and consequently
\[
e\cdot s = -\k |e|^2 \le 0.
\]
The data-driven gradient is taken as
\[
\tilde e(x,y) = e(x,y),
\]
the data-driven flux as
\[
\tilde s(x,y) = s(x,y)
\]
and the source as
\[
q(x,y) = \left(2\k\frac{\pi^2}{L^2}+\zeta\right)U_0\sin\left(\frac{\pi x}{L}\right)\sin\left(\frac{\pi y}{L}\right).
\]
For completeness, we also present the multiplier associated with the conservation law,
\[
\lambda(x,y) = 0
\]
and the multiplier corresponding to the geometric law,
\[
\mu(x,y) = 
\begin{pmatrix}
0\\[0.5em]
0
\end{pmatrix}.
\]

\subsection{Perturbed data-driven gradient and flux}

To investigate the ability of the penalized formulation to approximately enforce the thermodynamic admissibility condition, we introduce controlled perturbations into the data-driven fields. Let \(\tau>0\) denote the perturbation amplitude. Motivated by the material-data approximation scale, we consider perturbations such that \[ \tau \sim |\Omega|^{1/2}(\rho_h+t_h). \]

The data-driven gradient is perturbed as
\[
\tilde e(x,y)=e(x,y)+\tau r_e(x,y),
\]
where
\[
r_e(x,y)=
\begin{pmatrix}
\sin\left(\frac{10\pi x}{L}\right)
\sin\left(\frac{10\pi y}{L}\right)
\\[1ex]
\cos\left(\frac{10\pi x}{L}\right)
\cos\left(\frac{10\pi y}{L}\right)
\end{pmatrix}.
\]
The data-driven flux is perturbed as
\[
\tilde s(x,y)
=
s(x,y)
+\tau \k r_s(x,y)
+\tau\k \gamma b(x,y;L,\sigma) e(x,y),
\]
where
\[
r_s(x,y) =
\begin{pmatrix}
\cos\!\left(\frac{3\pi x}{L}\right)
\sin\!\left(\frac{3\pi y}{L}\right)
\\[1ex]
\sin\!\left(\frac{3\pi x}{L}\right)
\cos\!\left(\frac{3\pi y}{L}\right)
\end{pmatrix},
\]
and
\[
b(x, y; L, \sigma)
=
\exp\left(
-\frac{(\tfrac{x}{L}-\tfrac12)^2+(\tfrac{y}{L}-\tfrac12)^2}{\sigma^2}
\right)
\]
is a localized Gaussian bump function centered at the midpoint of the domain.

The parameter $\gamma\ge0$ controls the magnitude of the localized perturbation, while $\sigma>0$ determines its spatial extent. The term
\[
\tau\k\gamma b(x, y; L, \sigma) e(x, y)
\]
introduces a localized perturbation aligned with the gradient field. For sufficiently large values of $\gamma$, regions may emerge in which
\[
\tilde e\cdot\tilde s > 0,
\]
thereby violating the thermodynamic admissibility condition and activating the penalty term.

\subsection{Dimensionless measure of admissibility violation}

To quantify the violation of the admissibility condition
$
e\cdot s \le 0,
$
we introduce the positive-part functional
\[
\mathcal{P}(e,s)
=
\int_{\Omega}
(e\cdot s)_+ \, \dd x,
\]
where
$
a_+ = \max(a,0).
$
For the constitutive relation
$
s=-\k e,
$
it follows that
\[
e\cdot s = -\k |e|^2.
\]
Consequently, the natural scale of the quantity \(e\cdot s\) is that of
\(
\k |e|^2
\).
For the manufactured solution in \eqref{eq:u-manu}, the gradient satisfies
\[
|e(x,y)|^2
=
\left(\frac{U_0\pi}{L}\right)^2
\left(
\cos^2\left(\frac{\pi x}{L}\right)\sin^2\left(\frac{\pi y}{L}\right)
+
\sin^2\left(\frac{\pi x}{L}\right)\cos^2\left(\frac{\pi y}{L}\right)
\right)
\]
and therefore
\[
0\le |e|^2 \le \left(\frac{U_0\pi}{L}\right)^2.
\]
The characteristic scale for the violation functional is then
\[
\k\left(\frac{U_0\pi}{L}\right)^2|\Omega| = \k U_0^2 \pi^2.
\]
Motivated by this observation, we define the dimensionless violation measure
\[
\hat{\mathcal{P}}(e,s)
=
\frac{1}{\k U_0^2\pi^2}
\int_{\Omega}
(e\cdot s)_+~\dd x.
\]
In the numerical implementation, the positive-part operator is replaced by
the smooth approximation
\[
(a)_+
\approx
\frac12
\left(
a+\sqrt{a^2+\varepsilon^2}
\right),
\]
leading to the regularized dimensionless measure
\[
\hat{\mathcal{P}}_{\varepsilon}(e,s)
=
\frac12
\int_{\Omega}
\left(
\frac{e\cdot s}{\k U_0^2\pi^2}
+
\sqrt{\left(\frac{e\cdot s}{\k U_0^2\pi^2}\right)^2+\varepsilon^2}
\right)
~\dd x
\]
with \(\varepsilon > 0\) a dimensionless regularization parameter. However, the optimization problem to be solved uses the dimensional version of this regularized violation functional, \({\mathcal{P}}_{\varepsilon}\). The problem we consider is thus
\[
\arg \inf_{(u,e,s)}\sup_{(\lambda, \mu)} \left[ \mathcal{K}(u,e,s,\lambda,\mu) + R\, {\mathcal{P}}_{\varepsilon}(e,s)\right].
\]

\subsection{Results}

Here, we investigate the ability of the proposed penalized data-driven formulation to recover thermodynamically admissible solutions from inconsistent material data. The computational domain is discretized with a uniform \(100\times100 \times 2\) triangular mesh and the polynomial degree adopted is \(k=0\), i.e.,
\[Y_h :=
\left(\text{CG}_1 \times [\text{DG}_0]^2 \times [\text{DG}_0]^2 \times \text{CG}_1 \times [\text{DG}_0]^2\right) \cap Y.  \label{eq:fe-spaces-example}
\]
As stated above, the model parameters are fixed to \(\zeta=1\) and \(\k=1\), and we take \(U_0 = 1\), while the perturbation parameters are chosen as \(\gamma=50\) and \(\sigma=0.15\), and the regularization parameter as \(\varepsilon = 0.02\).

The objective is to assess the influence of the dimensionless penalty parameter \(\hat R:= \k U_0^2\pi^2R\) on the admissibility of the computed solution. For each dimensionless perturbation amplitude \(\hat \tau : = \tfrac{L\tau}{U_0\pi}\), the prescribed data fields \((\tilde e,\tilde s)\) are constructed as indicated above and may contain regions where the admissibility condition \(\tilde e\cdot\tilde s \le 0\) is violated. The optimization problem is then solved for increasing values of the dimensionaless penalty parameter \(\hat R\), and the number of cells for which \(e\cdot s>0\) is recorded. This allows us to quantify the ability of the proposed formulation to correct inadmissible material data while remaining as close as possible to the supplied data set.

\begin{table}[h!]
    \centering
    \small
    \begin{tabular}{c c c c c c c c}
        \toprule
        & \multicolumn{7}{c}{\textbf{Dimensionless Penalty Parameter } $\hat R$} \\
        \cmidrule(lr){2-8}
        & $0$ & $1.5625$ & $3.125$ & $6.25$ & $12.5$ & $25$ & $50$ \\
        \cmidrule(lr){2-2} \cmidrule(lr){3-8}
        $\hat\tau$ & $\#(\tilde{e} \cdot \tilde{s})_+$ & \multicolumn{6}{c}{$\#(e \cdot s)_+$} \\
        \midrule
        0.0000 &    0 &  0 &  0 &  0 &  0 & 0 & 0 \\
        0.0318 & 2332 &  2 &  2 &  0 &  0 & 0 & 0 \\
        0.0637 & 3321 &  6 &  6 &  6 &  2 & 2 & 0 \\
        0.0955 & 3891 & 12 & 12 &  8 &  8 & 2 & 0 \\
        0.1273 & 4296 & 26 & 18 & 18 & 12 & 2 & 0 \\
        0.1592 & 4600 & 40 & 36 & 26 & 18 & 2 & 0 \\
        \bottomrule
    \end{tabular}
    \caption{Number of cells violating the admissibility condition across increasing values of $\hat\tau$ (rows) and penalty parameter $\hat R$ (columns). The $\hat R=0$ column reports violations in the prescribed data $(\tilde{e}, \tilde{s})$, while columns for $\hat R > 0$ report violations in the computed solution $(e, s)$.}
    \label{tab:admissibility_violations}
\end{table}

The perturbed data fields exhibit a substantial number of admissibility violations. Nevertheless, the penalized data-driven formulation successfully suppresses violations in the computed solution. As the dimensionless penalty parameter \(\hat R\) increases, the number of cells satisfying \(e\cdot s > 0\) decreases monotonically overall and eventually vanishes. This suggests that the formulation is capable of recovering thermodynamically admissible states even when the supplied material data are strongly inconsistent.

To complement the quantitative results of Table~\ref{tab:admissibility_violations}, Figures~\ref{fig:manufactured}--\ref{fig:extreme_case} illustrate the fields involved at representative parameter values. We first show the manufactured fields $e$ and $s$ that serve as the admissible reference (Figure~\ref{fig:manufactured}), followed by the perturbed data $\tilde{e}$, $\tilde{s}$ and the resulting thermodynamic corrections for increasing $\hat\tau$ and $\hat R$ (Figures~\ref{fig:perturbed_tau05}--\ref{fig:extreme_case}). Throughout, the diverging color scale runs from blue (lowest value) to red (highest value).

Figure~\ref{fig:manufactured} shows the manufactured gradient and flux fields $e$ and $s$, which are smooth and free of discretization artifacts, as expected at this frequency on the $100\times100\times2$ mesh. Consistent with Darcy's law $s=-\mathsf{k}e$, the $s$ panels are sign-reversed mirror images of the corresponding $e$ panels.

\newcommand{\admissfig}[1]{%
    \includegraphics[width=\linewidth, trim=751bp 176bp 751bp 176bp, clip]{#1}%
}

\begin{figure}[t]
    \centering
    \begin{subfigure}[b]{0.48\linewidth}
        \centering
        \admissfig{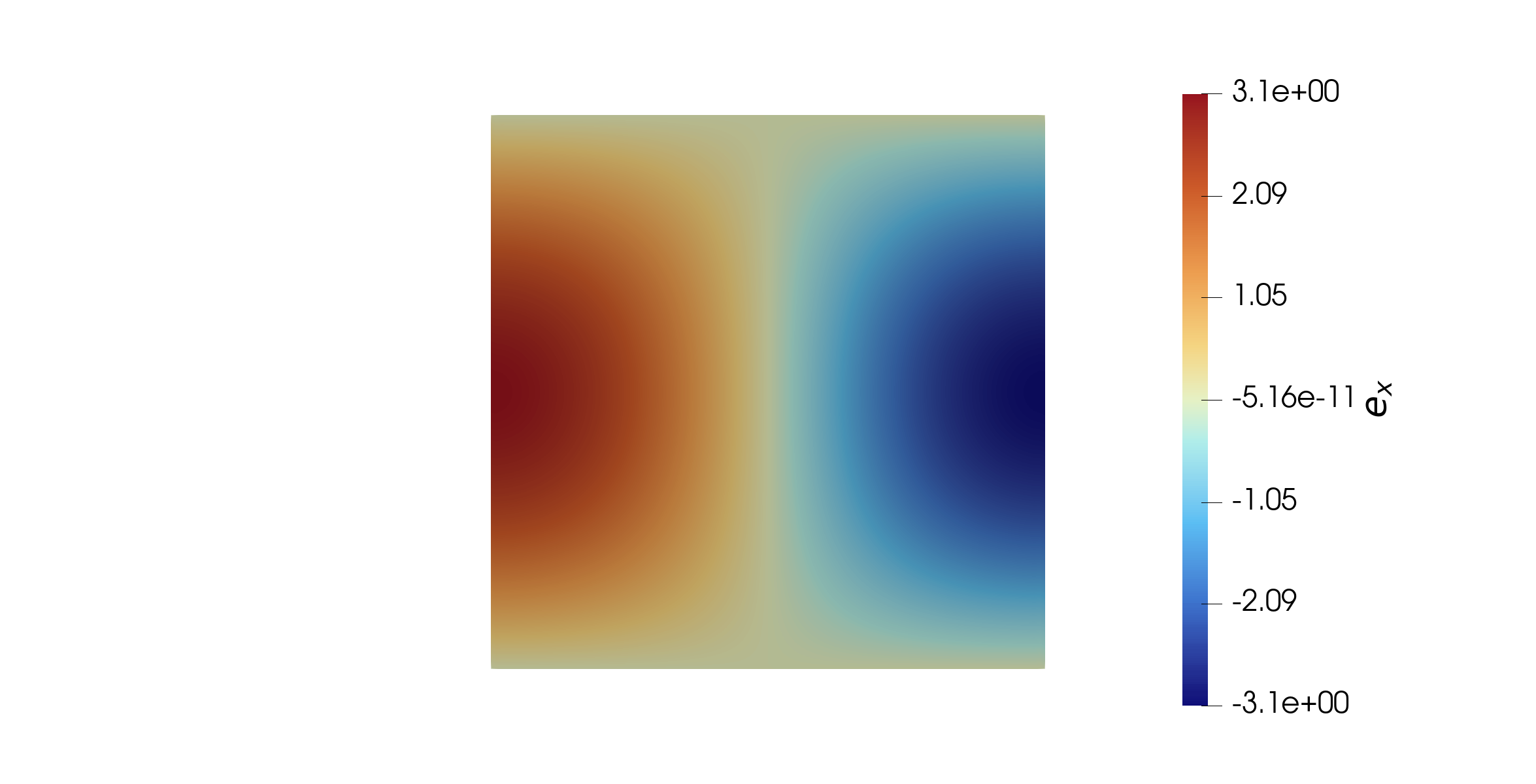}
        \caption{$e_x\in [-3.1,3.1]$.}
    \end{subfigure}
    \hfill
    \begin{subfigure}[b]{0.48\linewidth}
        \centering
        \admissfig{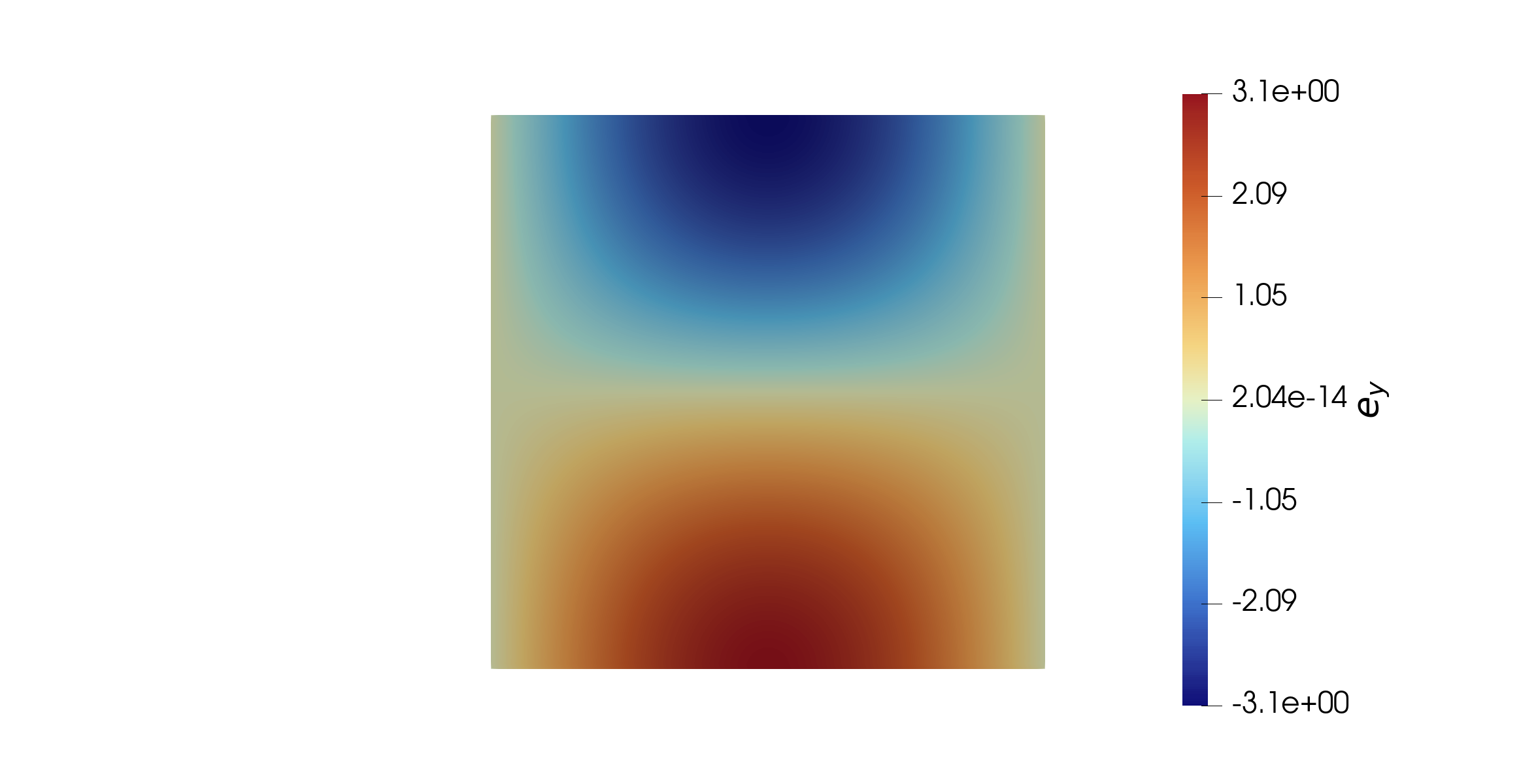}
        \caption{$e_y\in [-3.1,3.1]$.}
    \end{subfigure}
    \vspace{1em}
    \begin{subfigure}[b]{0.48\linewidth}
        \centering
        \admissfig{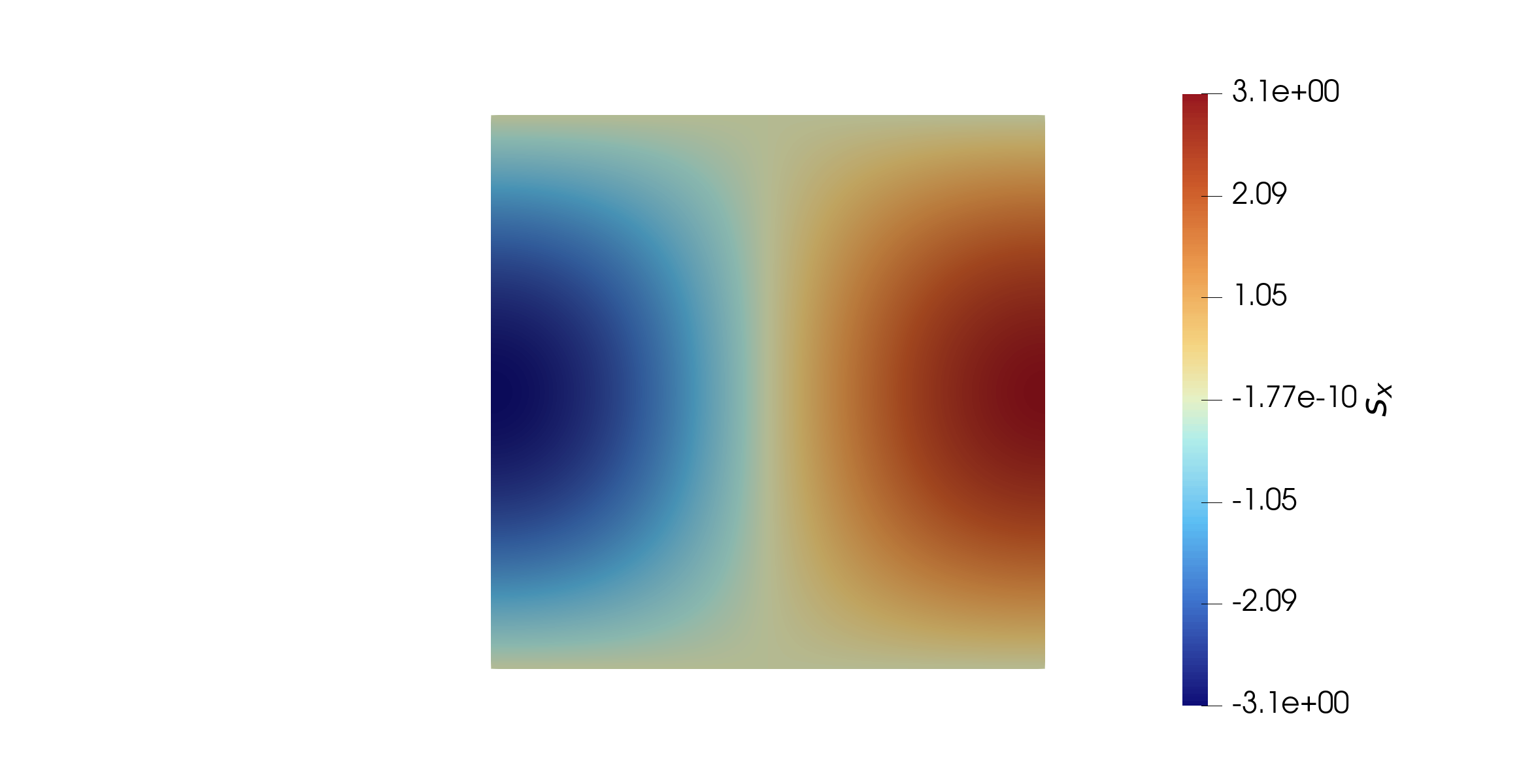}
        \caption{$s_x\in [-3.1,3.1]$.}
    \end{subfigure}
    \hfill
    \begin{subfigure}[b]{0.48\linewidth}
        \centering
        \admissfig{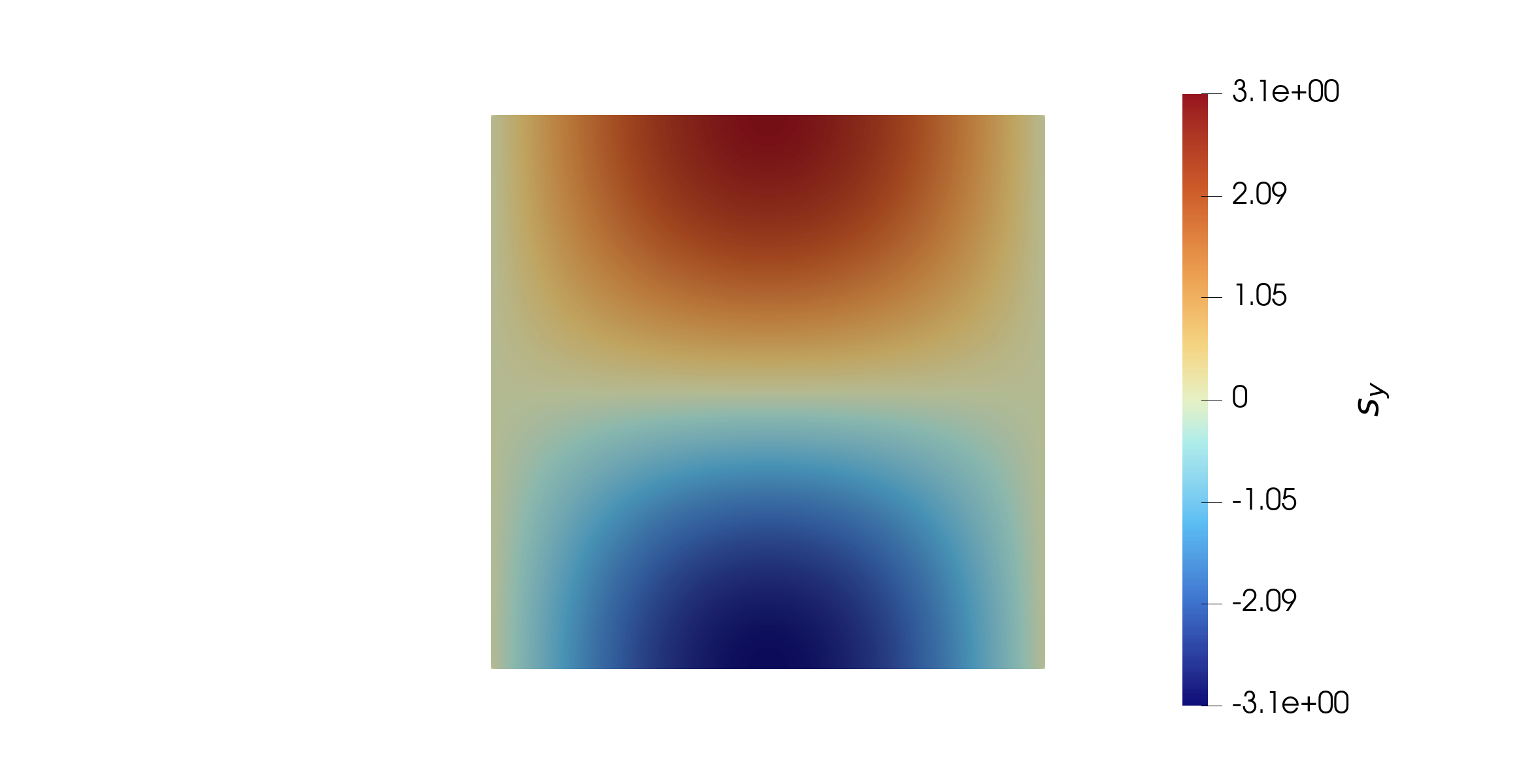}
        \caption{$s_y\in [-3.1,3.1]$.}
    \end{subfigure}
    \caption{Manufactured fields $e$ and $s$.}
    \label{fig:manufactured}
\end{figure}

Figure~\ref{fig:perturbed_tau05} shows the perturbed ``data-driven'' fields $\tilde{e}$ and $\tilde{s}$ for $\hat\tau=0.1592$. The high-frequency oscillations $r_e$ superimposed on $\tilde{e}$ are clearly visible, while $\tilde{s}$ is dominated by the localized Gaussian bump, producing a markedly different, more localized structure than the smooth manufactured fields of Figure~\ref{fig:manufactured}.

\begin{figure}[t]
    \centering
    \begin{subfigure}[b]{0.48\linewidth}
        \centering
        \admissfig{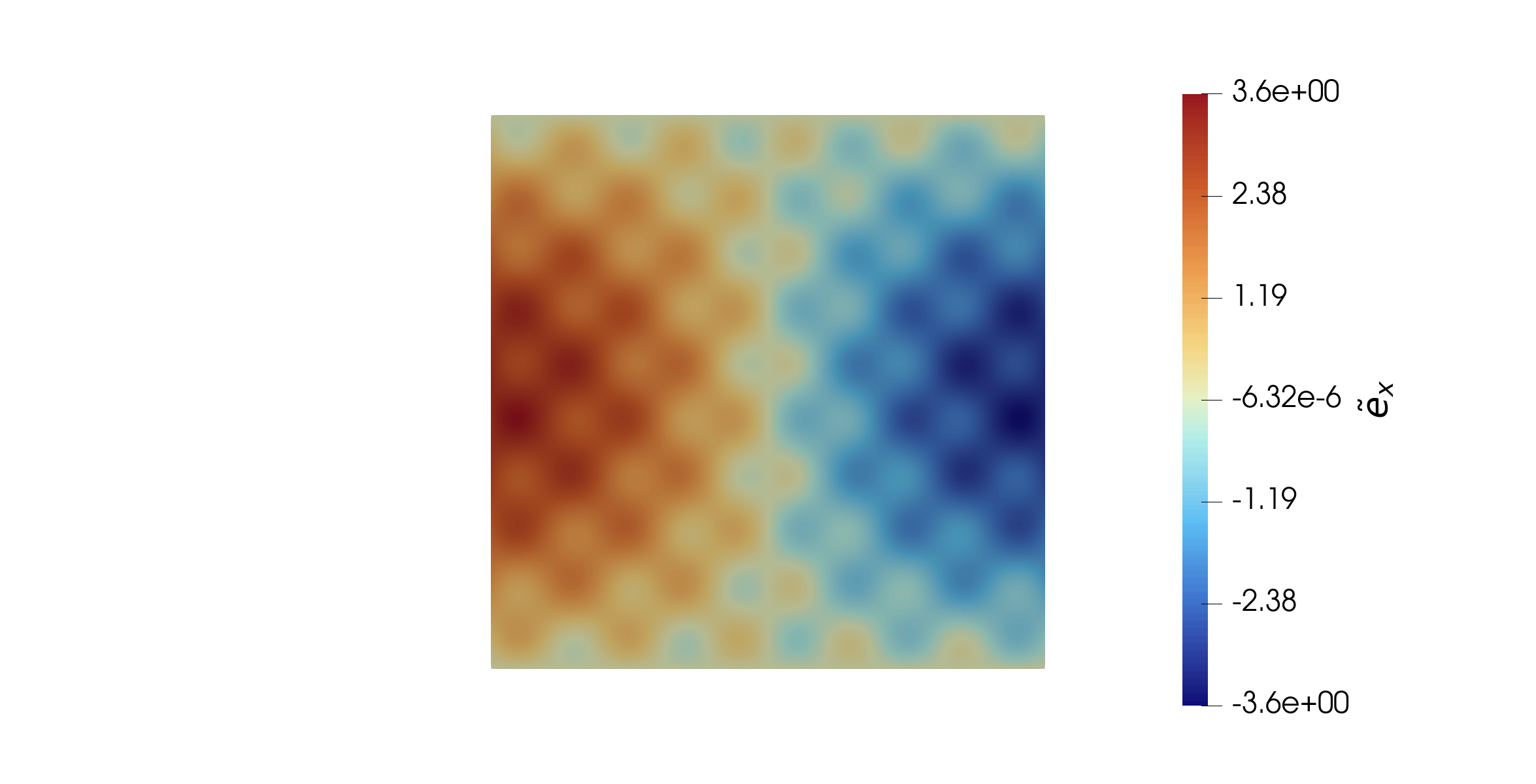}
        \caption{$\tilde{e}_x\in [-3.6,3.6]$.}
    \end{subfigure}
    \hfill
    \begin{subfigure}[b]{0.48\linewidth}
        \centering
        \admissfig{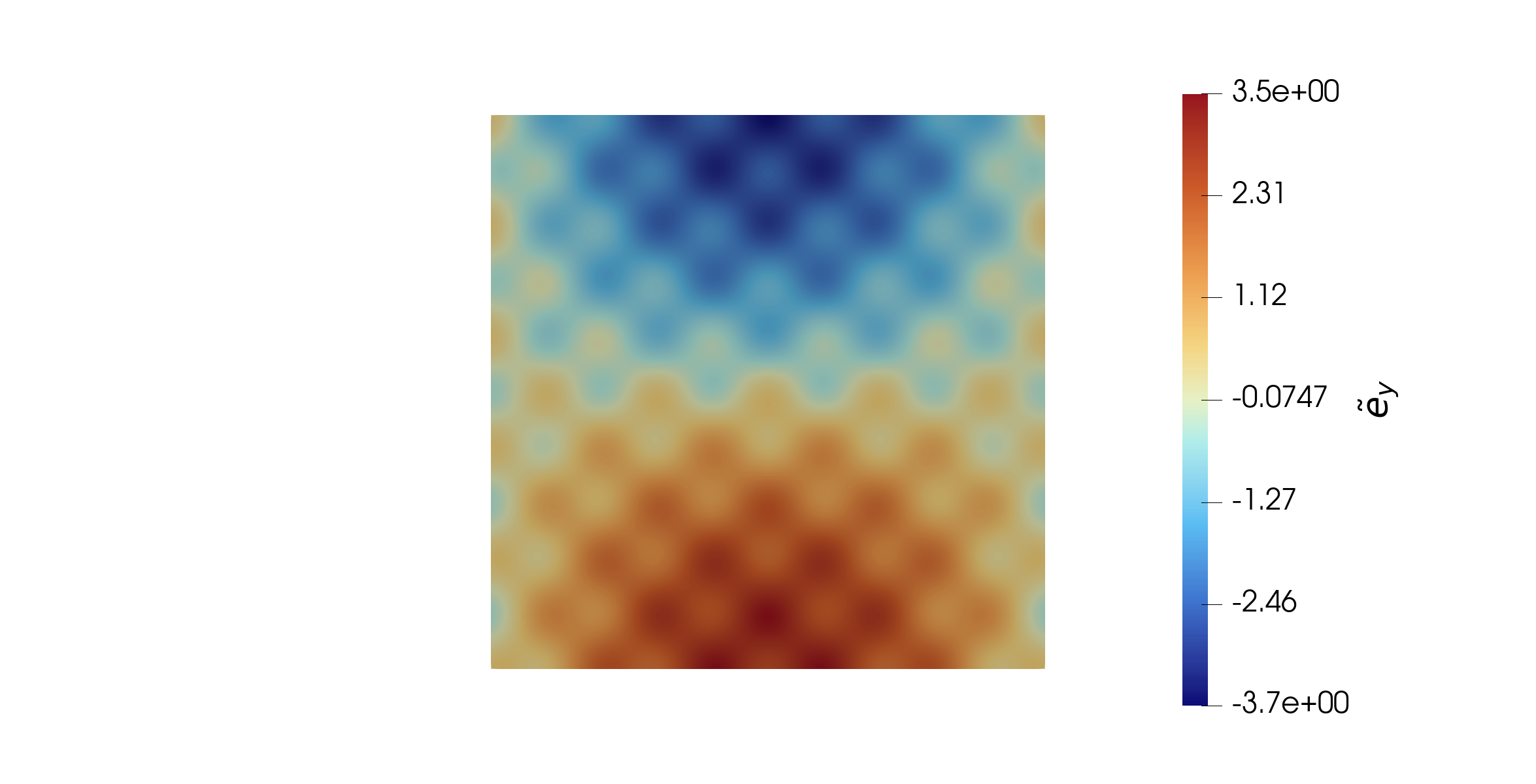}
        \caption{$\tilde{e}_y\in [-3.7,3.5]$.}
    \end{subfigure}
    \vspace{1em}
    \begin{subfigure}[b]{0.48\linewidth}
        \centering
        \admissfig{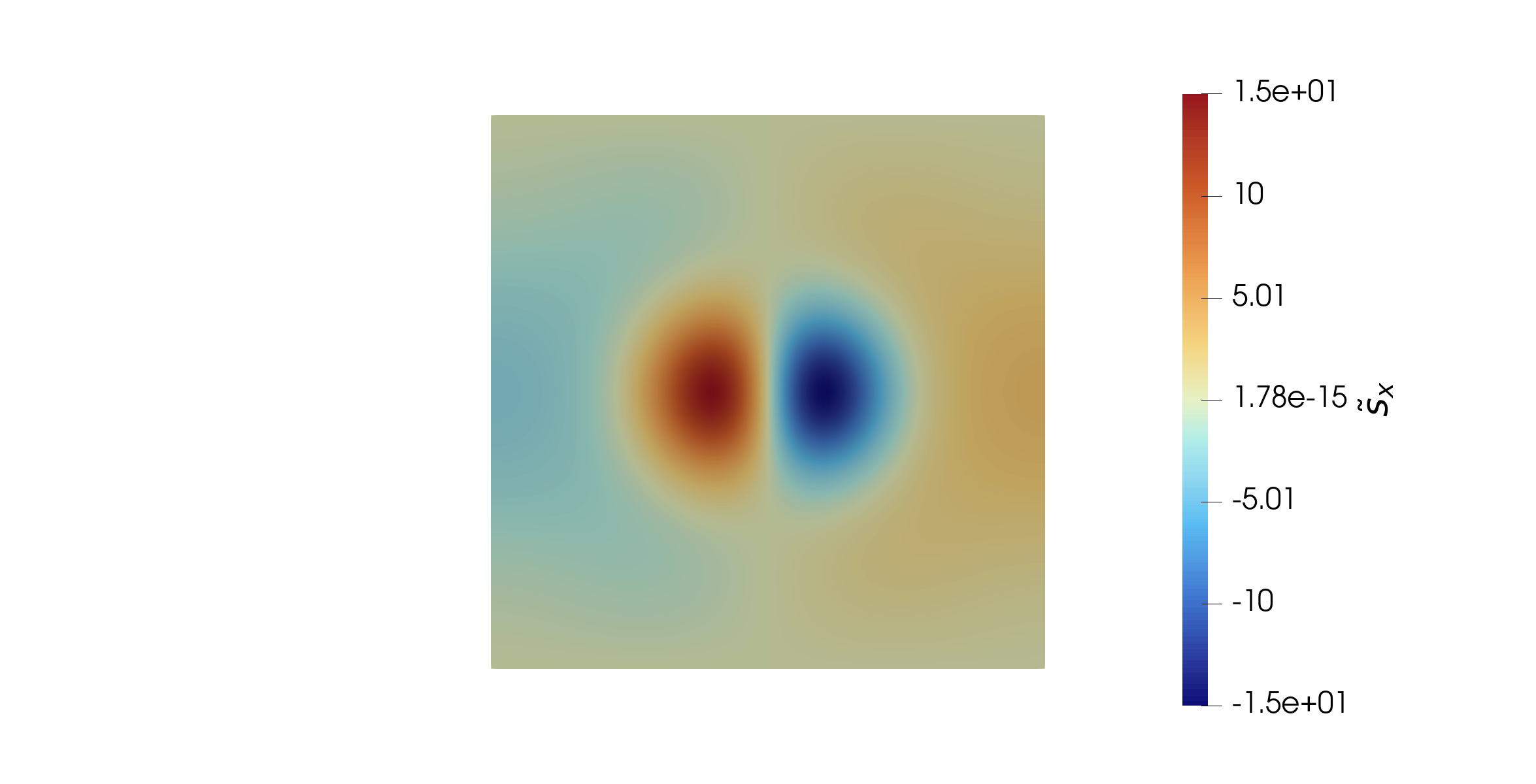}
        \caption{$\tilde{s}_x\in [-15.0,15.0]$.}
        \label{fig:violation-R12}
    \end{subfigure}
    \hfill
    \begin{subfigure}[b]{0.48\linewidth}
        \centering
        \admissfig{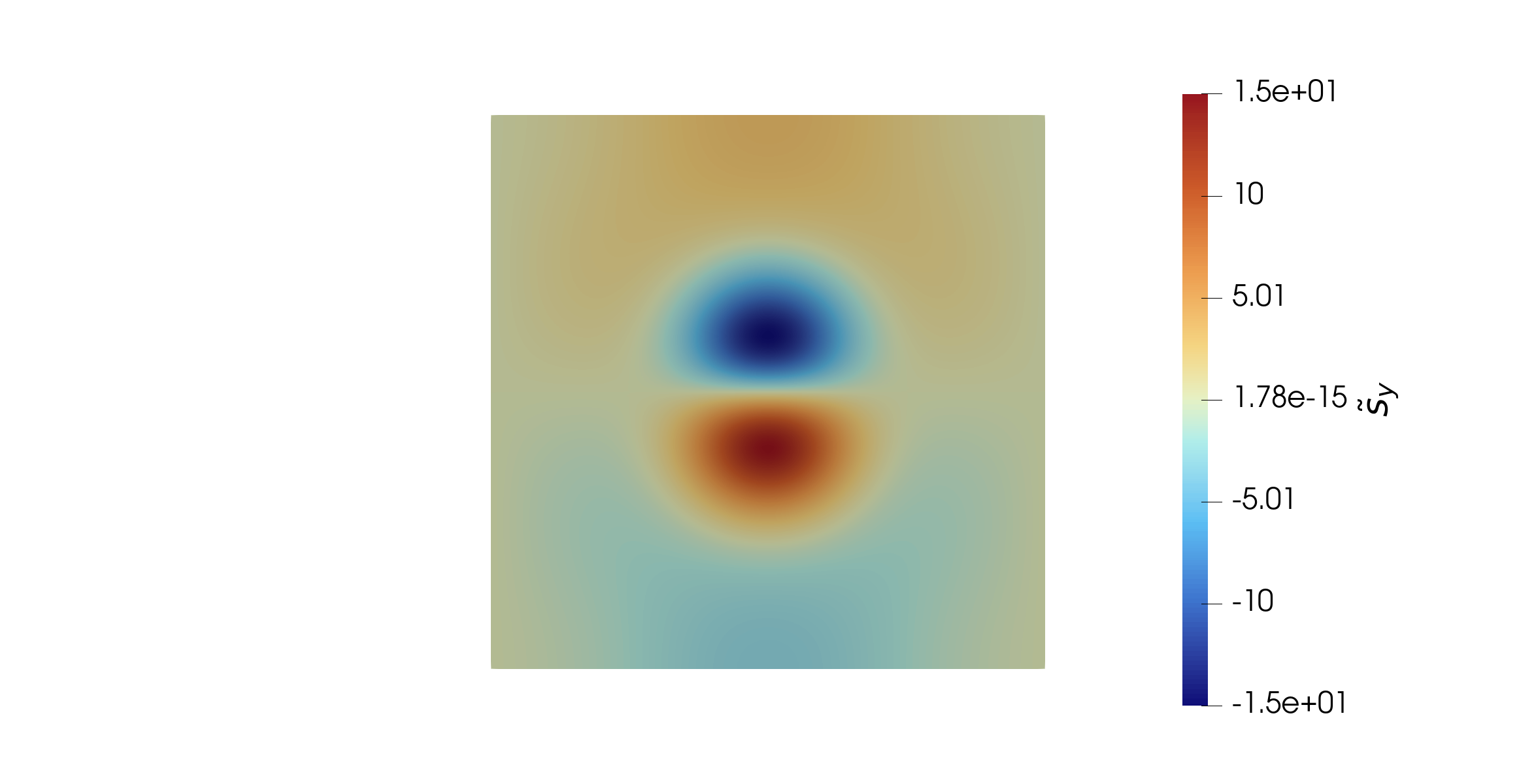}
        \caption{$\tilde{s}_y\in [-15.0,15.0]$.}
    \end{subfigure}
    \caption{Perturbed fields $\tilde e$ and $\tilde s$ for $\hat\tau = 0.1592$.}
    \label{fig:perturbed_tau05}
\end{figure}

Figure~\ref{fig:penalty_effect} shows the effect of the dimensionless penalty parameter $\hat R$ on thermodynamic admissibility for $\hat\tau=0.1592$. The raw data (A) exhibit a large connected violation region, which is progressively suppressed as $\hat R$ increases (B)--(D), completely vanishing by $\hat R=50$, confirming that the penalized formulation recovers admissible states from inconsistent material data.

\begin{figure}[t]
    \centering
    \begin{subfigure}[b]{0.48\linewidth}
        \centering
        \admissfig{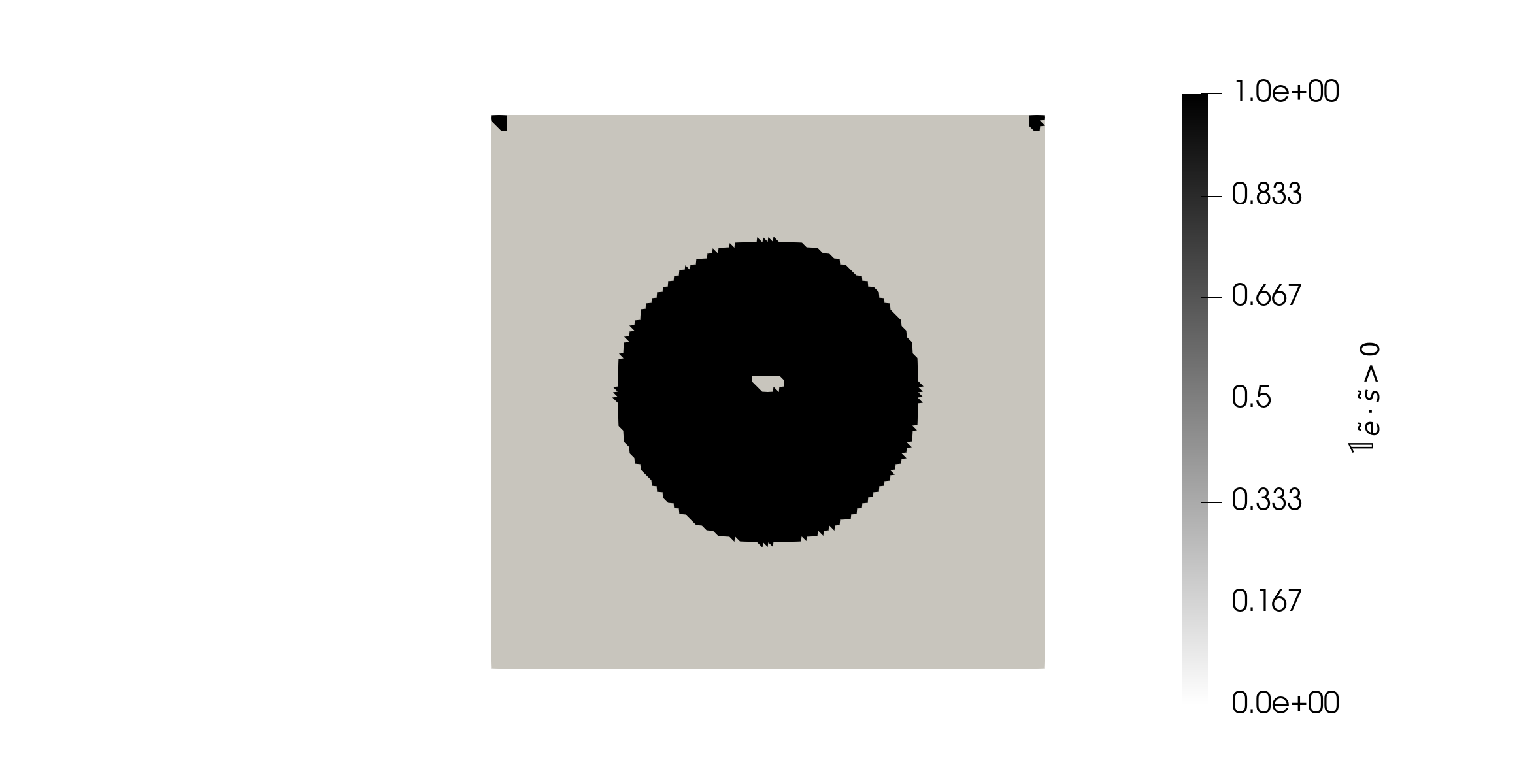}
        \caption{Raw data, $\hat\tau = 0.1592$, $\tilde{e}\cdot\tilde{s}$}
    \end{subfigure}
    \hfill
    \begin{subfigure}[b]{0.48\linewidth}
        \centering
        \admissfig{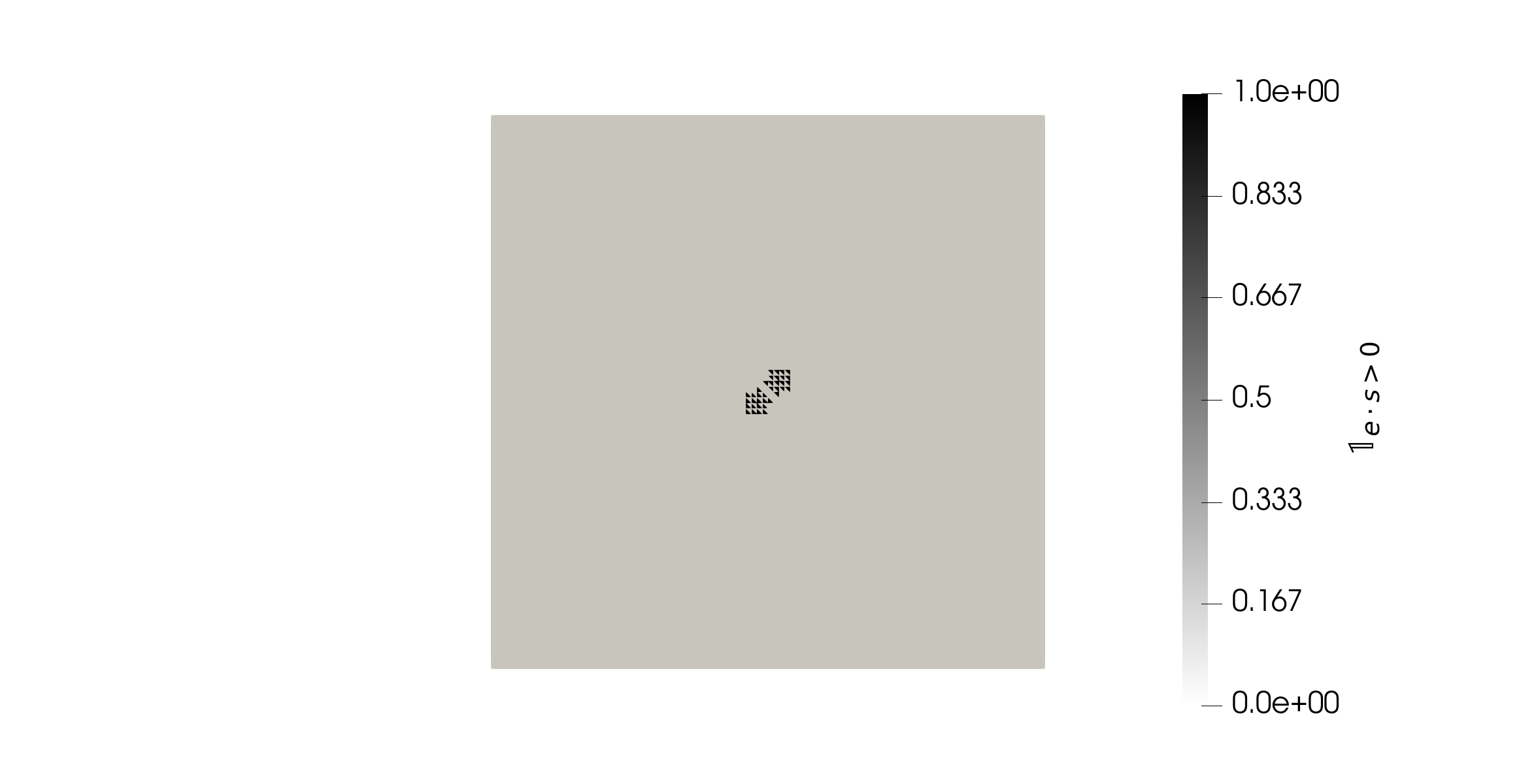}
        \caption{$\hat\tau=0.1592$, $\hat R=3.125$,  $e\cdot s$.}
    \end{subfigure}
    \vspace{1em}
    \begin{subfigure}[b]{0.48\linewidth}
        \centering
        \admissfig{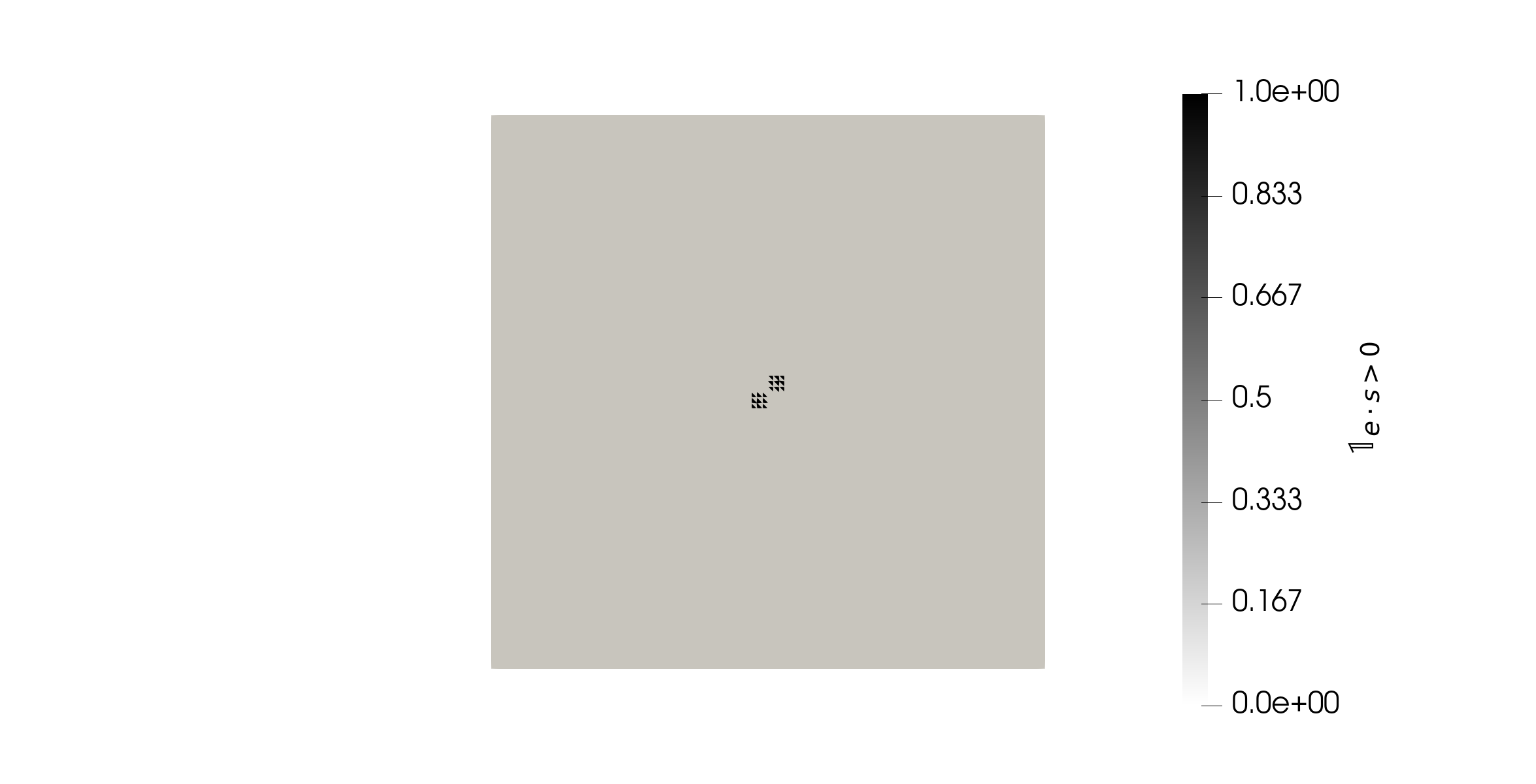}
        \caption{$\hat\tau=0.1592$, $\hat R=12.5$, $e\cdot s$.}
    \end{subfigure}
    \hfill
    \begin{subfigure}[b]{0.48\linewidth}
        \centering
        \admissfig{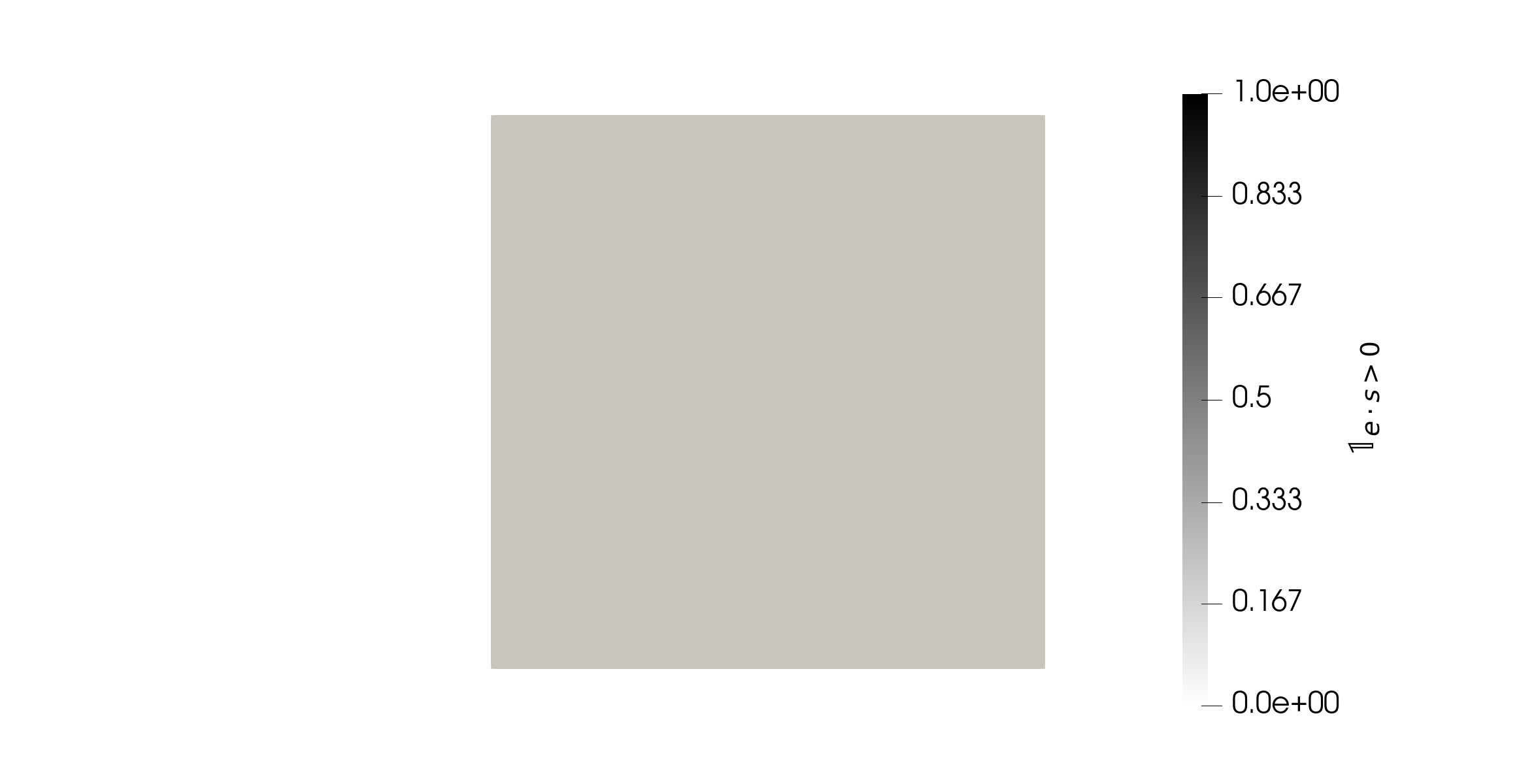}
        \caption{$\hat\tau=0.1592$, $\hat R=50$, $e\cdot s$.}
    \end{subfigure}
    \caption{Effect of the penalty parameter $\hat R$ on thermodynamic admissibility for $\hat\tau = 0.1592$.
    (A) shows the raw, inconsistent data $\tilde{e}\cdot\tilde{s}$, (B)--(D) show the solved $e\cdot s$ at increasing $\hat R$,
    with the violation region (black) shrinking as $\hat R$ grows. Grey cells
    satisfy $e\cdot s \le 0$; black cells violate it.}
    \label{fig:penalty_effect}
\end{figure}

Figure~\ref{fig:perturbed_tau5} shows the perturbed fields $\tilde{e}$ and $\tilde{s}$ for the larger perturbation amplitude $\hat\tau=1.592$. The high-frequency component of $\tilde{e}$ is now far more pronounced, while $\tilde{s}$ retains the same localized bump structure as in Figure~\ref{fig:perturbed_tau05}, but with substantially larger amplitude, reflecting the stronger admissibility violations reported in Table~\ref{tab:admissibility_violations}.

\begin{figure}[t]
    \centering
    \begin{subfigure}[b]{0.48\linewidth}
        \centering
        \admissfig{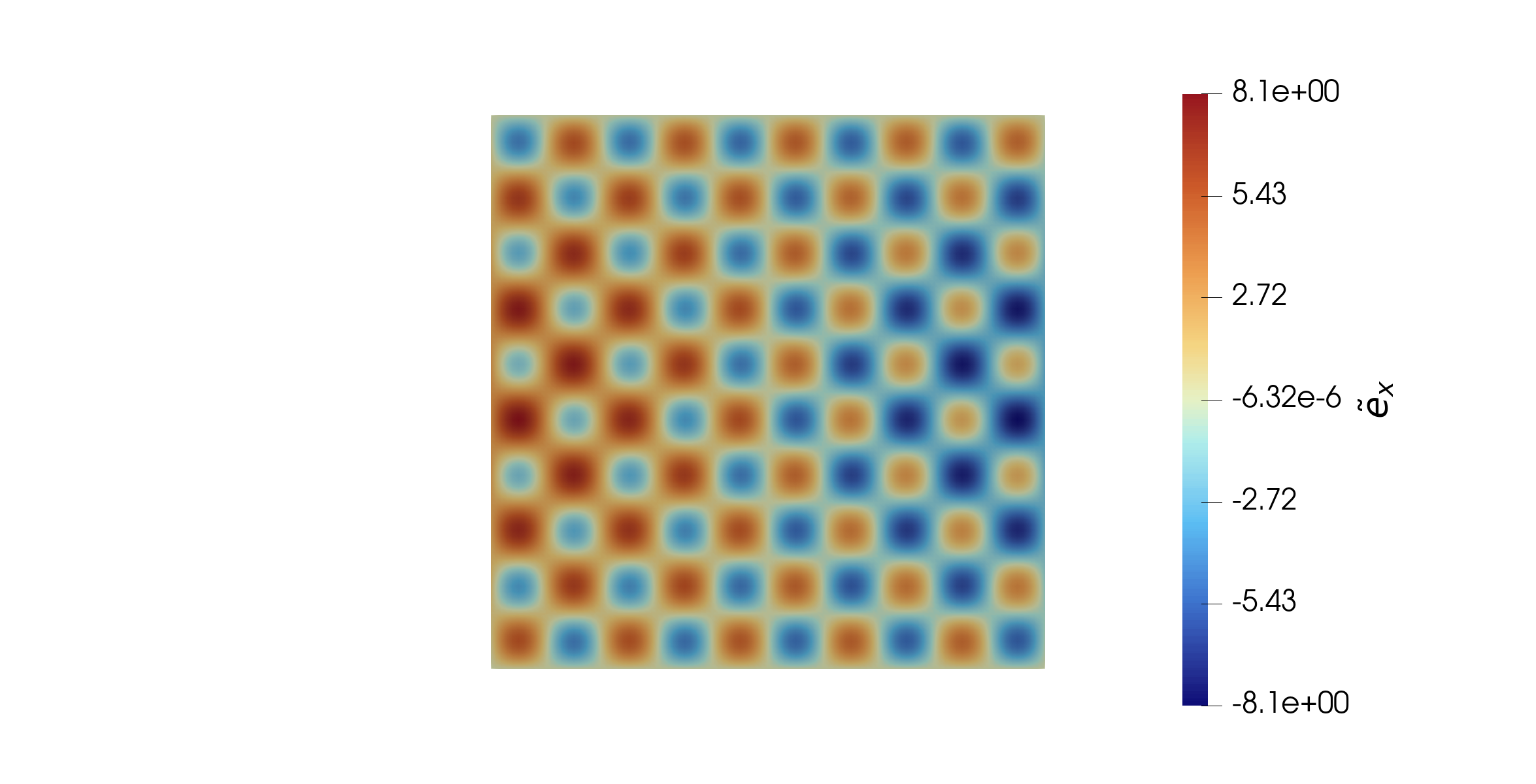}
        \caption{$\tilde{e}_x\in[-8.1,8.1]$.}
    \end{subfigure}
    \hfill
    \begin{subfigure}[b]{0.48\linewidth}
        \centering
        \admissfig{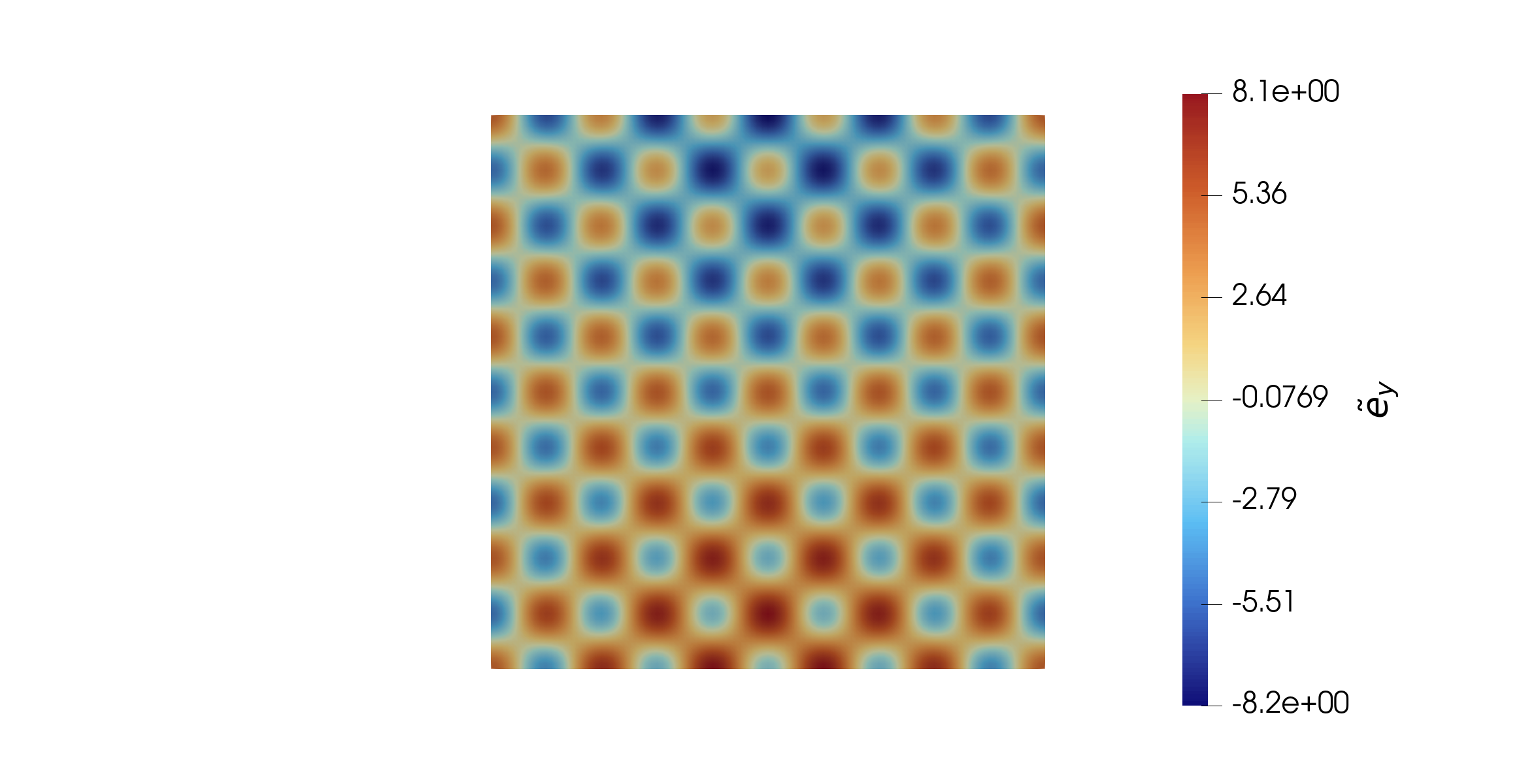}
        \caption{$\tilde{e}_y\in[-8.2,8.1]$.}
    \end{subfigure}
    \vspace{1em}
    \begin{subfigure}[b]{0.48\linewidth}
        \centering
        \admissfig{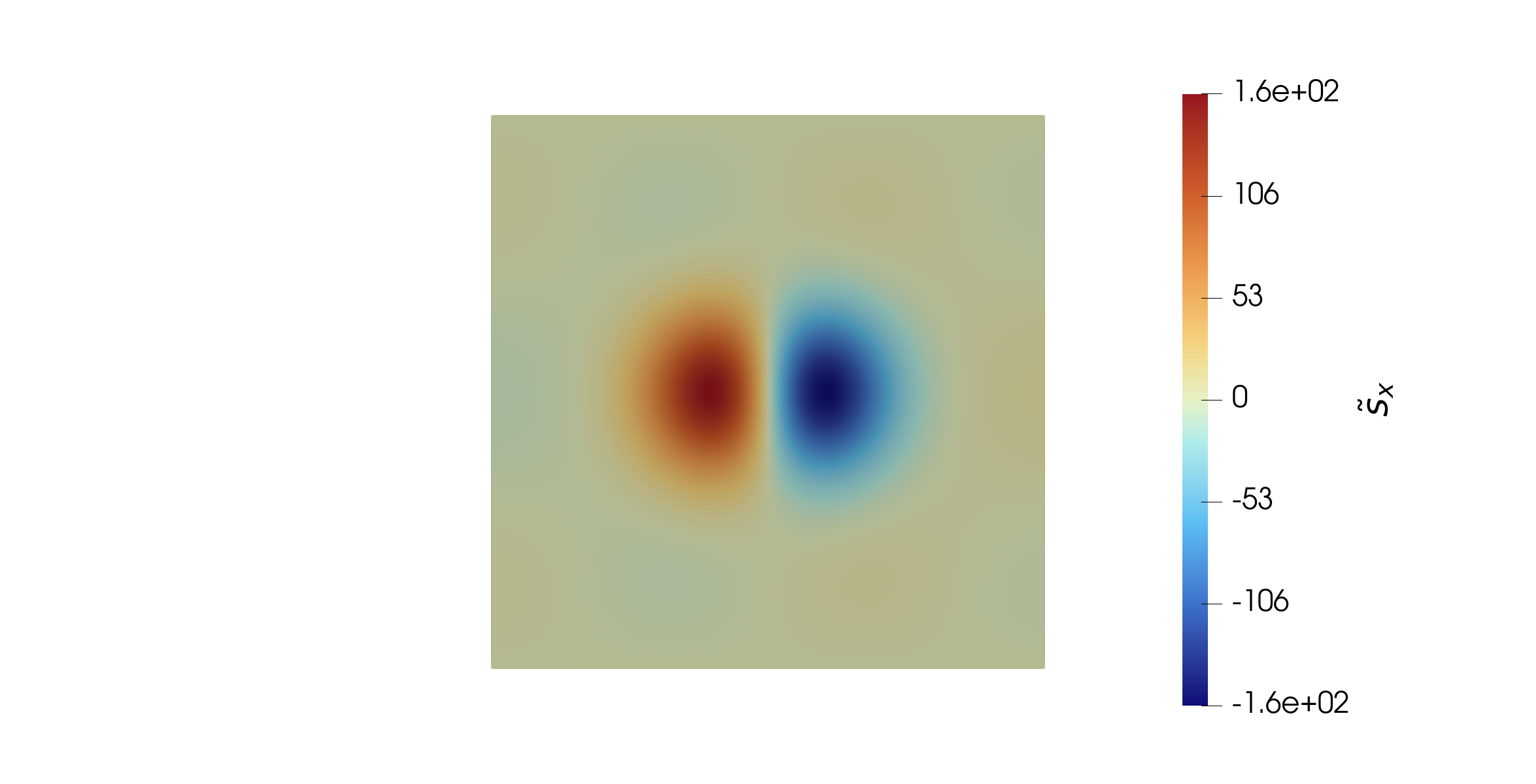}
        \caption{$\tilde{s}_x\in[-160.0,160.0]$.}
    \end{subfigure}
    \hfill
    \begin{subfigure}[b]{0.48\linewidth}
        \centering
        \admissfig{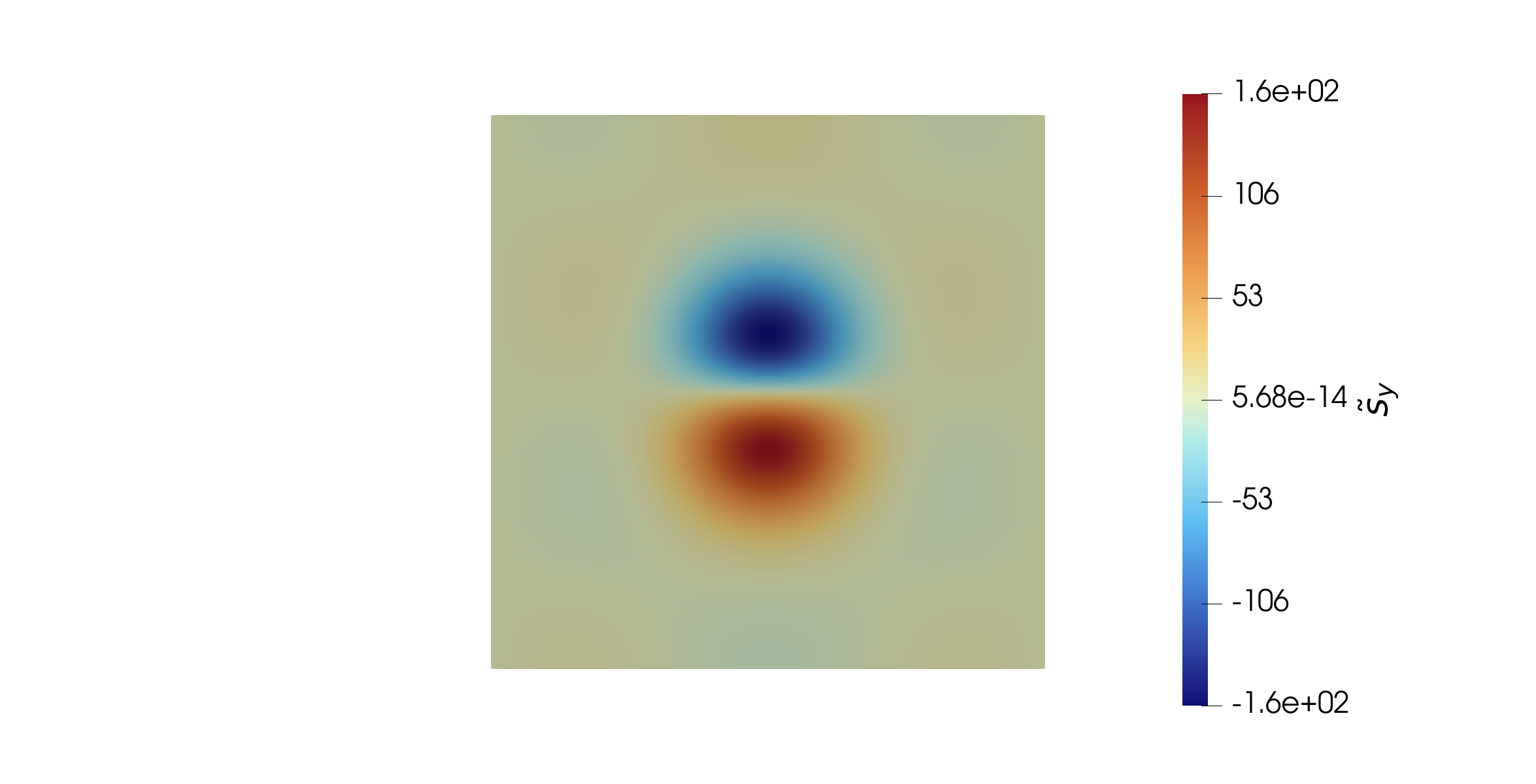}
        \caption{$\tilde{s}_y\in[-160.0,160.0]$.}
    \end{subfigure}
    \caption{Perturbed fields $\tilde e$ and $\tilde s$ for $\hat\tau = 1.592$.}
    \label{fig:perturbed_tau5}
\end{figure}

Figure~\ref{fig:computed_tau5} shows the computed fields $e$ and $s$ for $\hat\tau=1.592$ and $\hat R=15$. Away from the perturbation center the fields closely resemble the manufactured solution of Figure~\ref{fig:manufactured}, while a localized distortion emerges near the Gaussian bump, marking where the penalty actively corrects the thermodynamically inconsistent data.

\begin{figure}[t]
    \centering
    \begin{subfigure}[b]{0.48\linewidth}
        \centering
        \admissfig{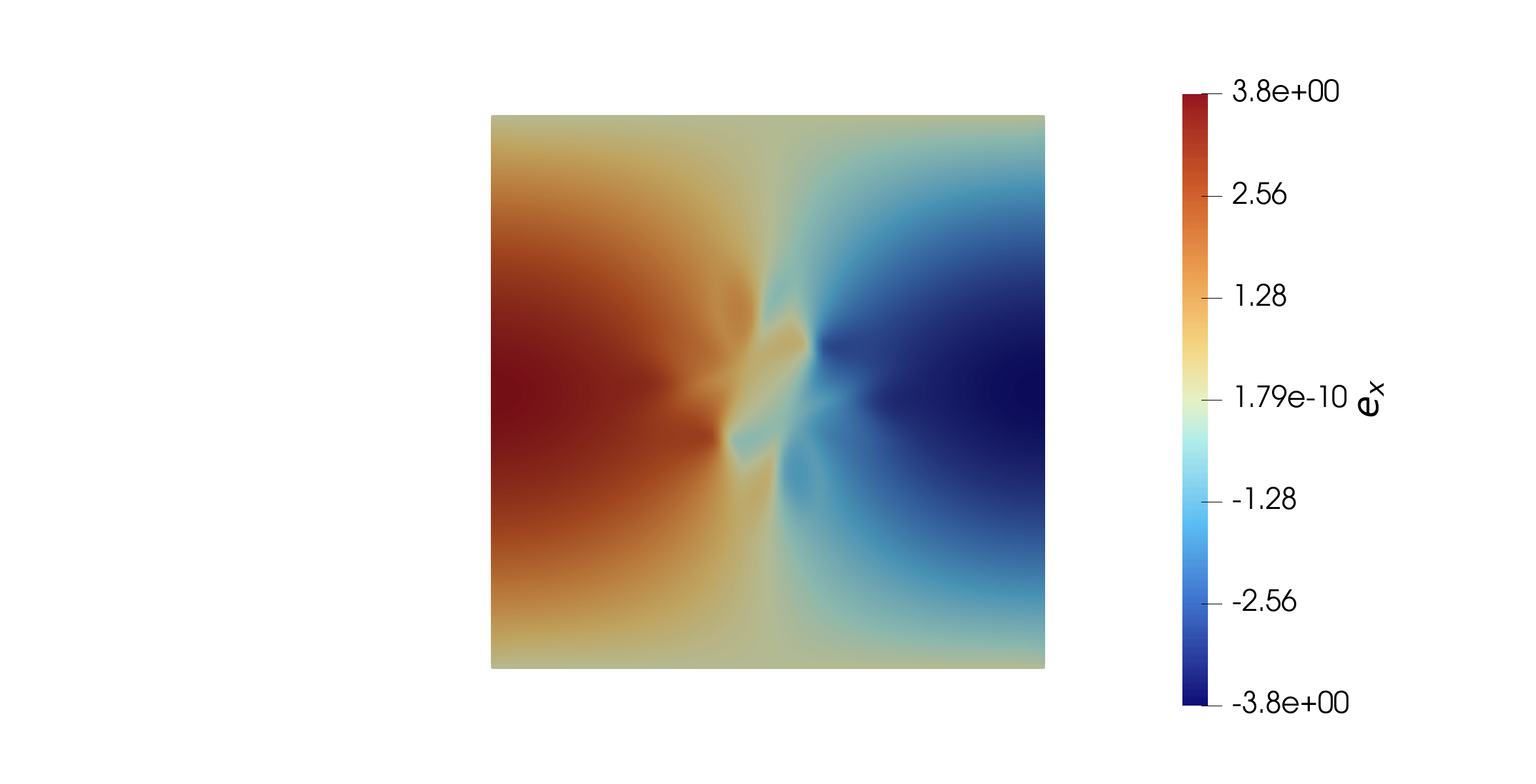}
        \caption{$e_x\in[-3.8,3.8]$.}
    \end{subfigure}
    \hfill
    \begin{subfigure}[b]{0.48\linewidth}
        \centering
        \admissfig{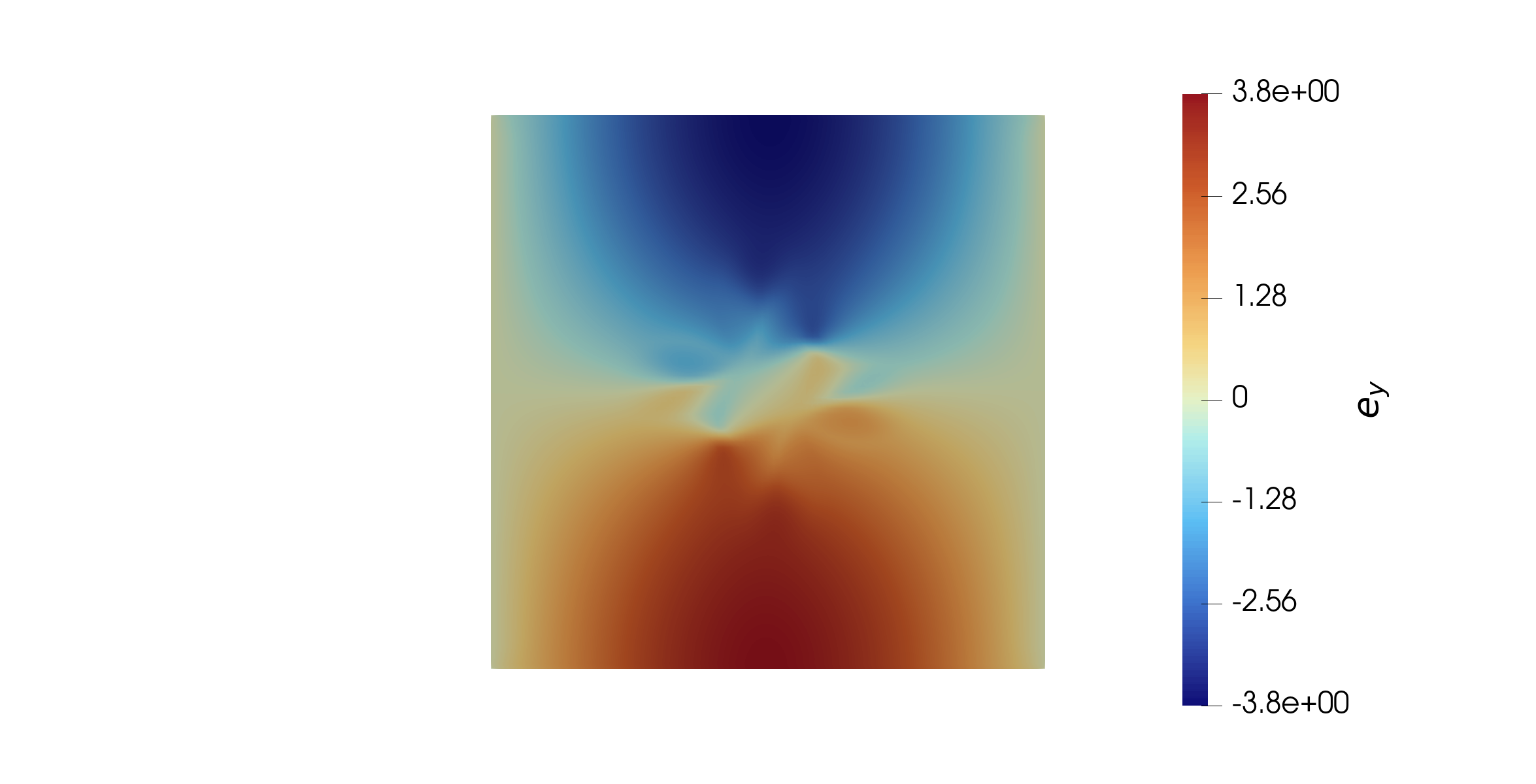}
        \caption{$e_y\in[-3.8,3.8]$.}
    \end{subfigure}

    \vspace{1em}

    \begin{subfigure}[b]{0.48\linewidth}
        \centering
        \admissfig{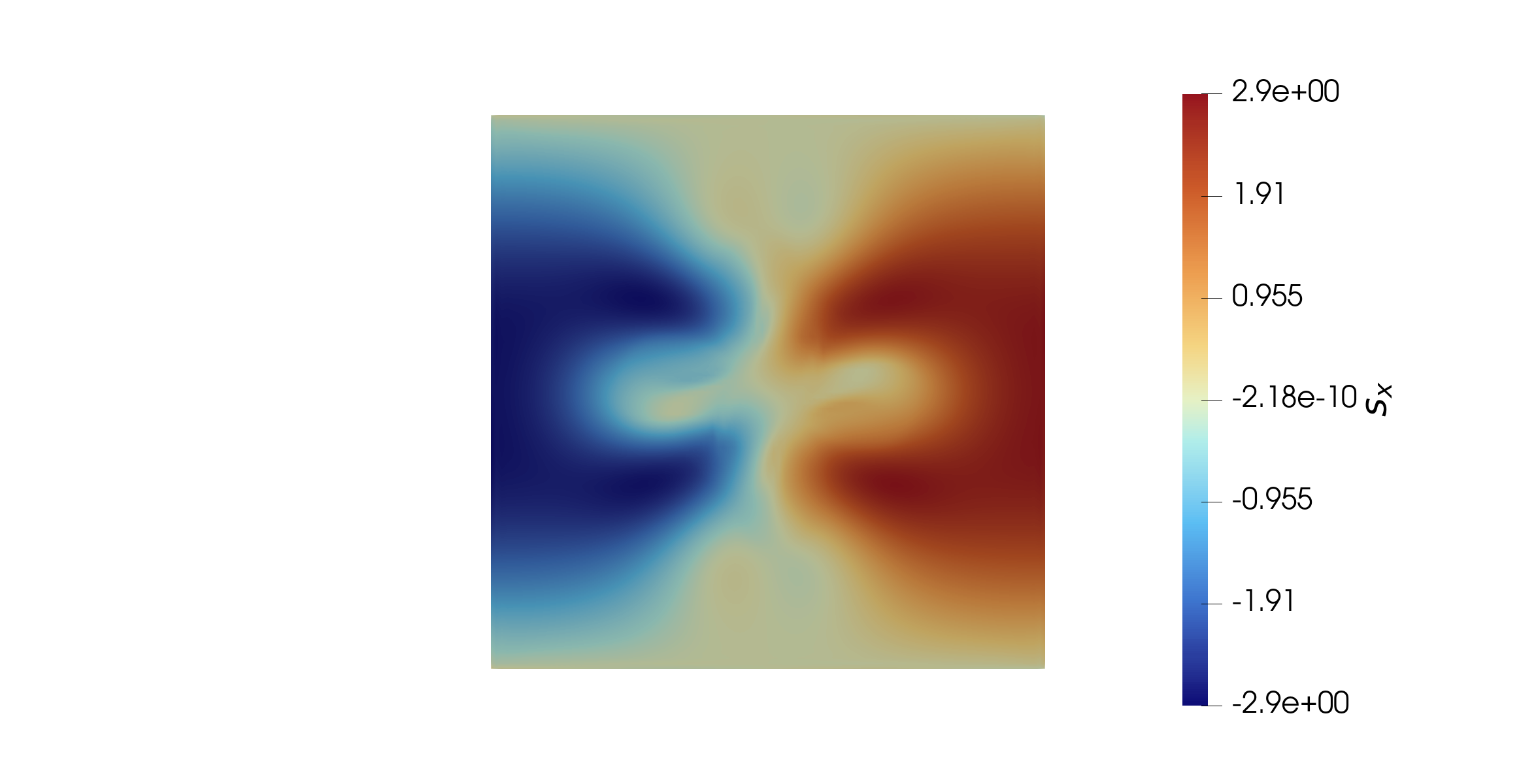}
        \caption{$s_x\in[-2.9,2.9]$.}
    \end{subfigure}
    \hfill
    \begin{subfigure}[b]{0.48\linewidth}
        \centering
        \admissfig{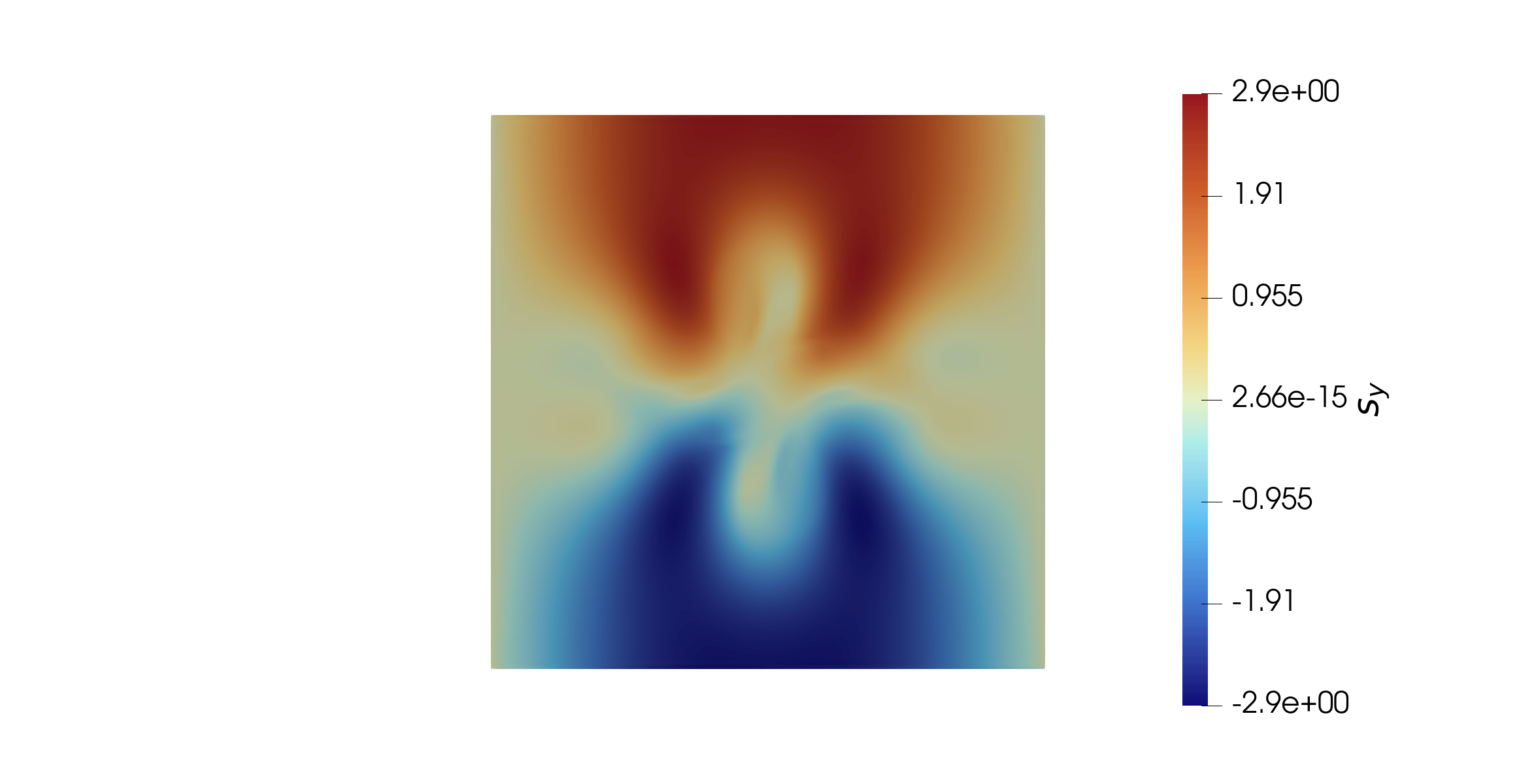}
        \caption{$s_y\in[-2.9,2.9]$.}
    \end{subfigure}
    \caption{Computed fields $e$ and $s$ for $\hat\tau = 1.592$ and $\hat R = 15$.}
    \label{fig:computed_tau5}
\end{figure}

Figure~\ref{fig:extreme_case} shows the extreme case $\hat\tau=1.592$: the raw data (A) violate admissibility over most of the domain, yet the penalized solution at $\hat R=15$ (B) confines violations to a handful of isolated cells, demonstrating that the formulation remains effective even under severe thermodynamic inconsistency.

\begin{figure}[t]
    \centering
    \begin{subfigure}[b]{0.49\linewidth}
        \centering
        \admissfig{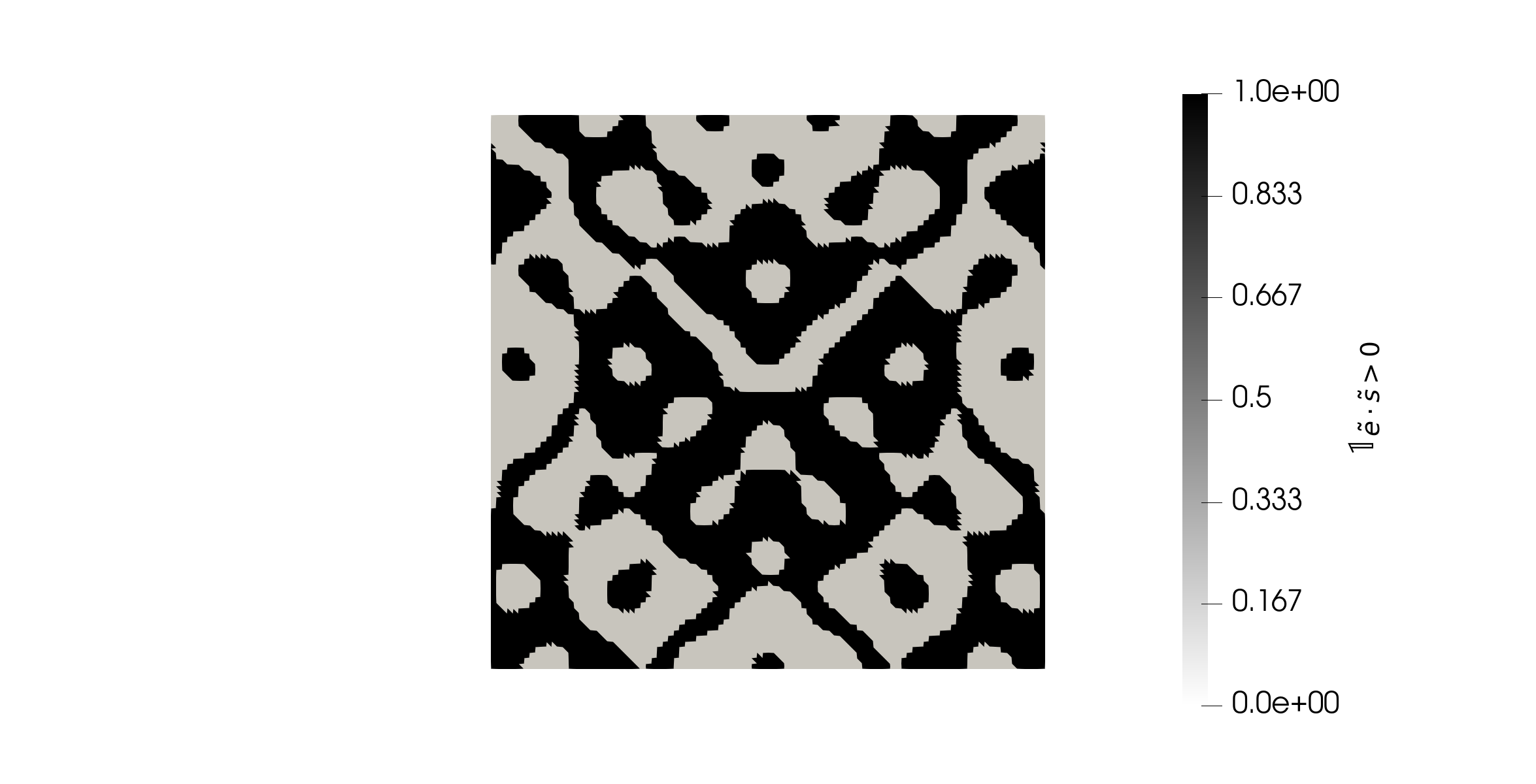}
        \caption{Raw data, $\hat\tau = 1.592$, $\tilde{e}\cdot\tilde{s}$.}
    \end{subfigure}
    \hfill
    \begin{subfigure}[b]{0.49\linewidth}
        \centering
        \admissfig{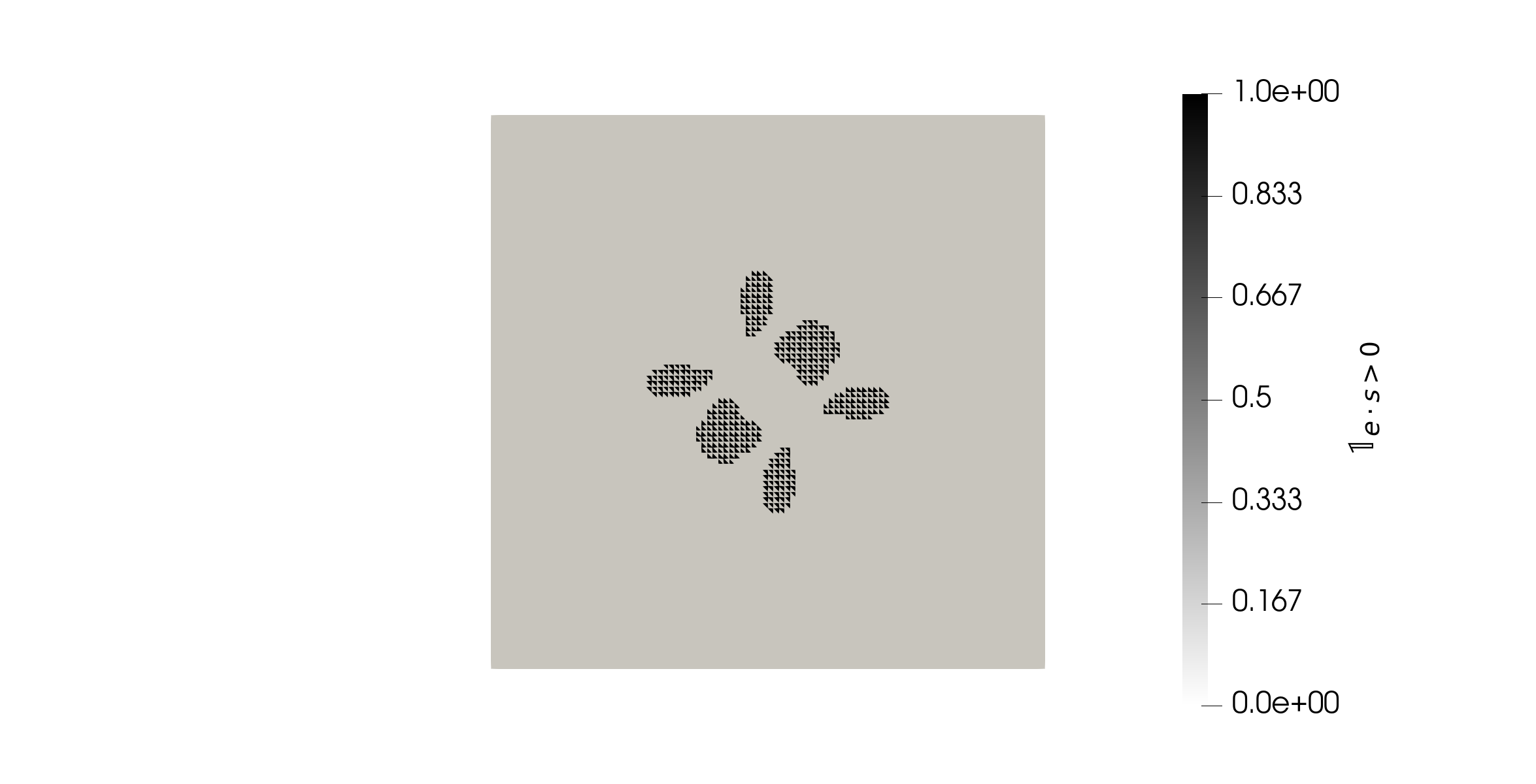}
        \caption{$\hat\tau=1.592$, $\hat R=15$, $e \cdot s$.}
    \end{subfigure}
    \caption{Extreme case showing the effectiveness of the penalty enforcement for $\hat\tau = 1.592$.
    (A) shows the raw, inconsistent data $\tilde{e}\cdot\tilde{s}$; (B) shows the solved $e\cdot s$ at $\hat R = 15$. Grey cells
    satisfy $e\cdot s \le 0$; black cells violate it.}
    \label{fig:extreme_case}
\end{figure}

\begin{remark}[Scope of the numerical verification]
The numerical example above verifies the penalty-based enforcement of the thermodynamic inequality on a manufactured solution. Numerical verification of the joint convergence results — in particular, of the balancing rules coupling the discretization, data-approximation, and penalty scales — is not addressed here and is left to future work.    
\end{remark}